\theoremstyle{plain}
\numberwithin{equation}{section}
\newtheorem{theorem}{Theorem}[section]
\newtheorem{proposition}[theorem]{Proposition}
\newtheorem{lemma}[theorem]{Lemma}
\newtheorem{remark}[theorem]{Remark}
\newtheorem{remarks}[theorem]{Remark}
\newtheorem{definition}[theorem]{Definition}
\newcommand{\be}{\begin{equation}}
\newcommand{\ee}{\end{equation}}
\newcommand{\teta}{\theta}
\newcommand{\e}{\varepsilon}
\newcommand{\ep}{\epsilon}
\newcommand{\al}{\alpha}
\newcommand{\ov}{\overline}
\newcommand{\R}{\mathbb R}
\newcommand{\C}{\mathbb C}
\newcommand{\Z}{\mathbb Z}
\newcommand{\N}{\mathbb N}
\newcommand{\T}{\mathbb T}
\newcommand{\s }{\sigma }
\newcommand{\ii }{{\rm i} }
\newcommand{\bral}{[ \! [} 
\newcommand{\brar}{] \! ]}
\newcommand{\dps}{\displaystyle}
\newcommand{\x }{\xi }
\newcommand{\pa}{\partial}
\newcommand{\opbw}{{Op^{\mathrm{BW}}}}
\def\hat{\widehat}
\def\bar{\overline}
\def\cal{\mathcal}
\renewcommand{\Im}{\mathrm{Im}\,}
\newcommand{\mk}{{\Omega}}
\def\ba{\begin{aligned}}
\def\ea{\end{aligned}}
\def\beginm{\begin{multline}}
\def\endm{\end{multline}}
\providecommand{\vect}[2]{{\bigl[\begin{smallmatrix}#1\\#2\end{smallmatrix}\bigr]}}   
\providecommand{\sm}[4]{{\bigl[\begin{smallmatrix}#1&#2\\#3&#4\end{smallmatrix}\bigr]}}
\begin{document}

\title{\bf 
Quadratic life span
 \\  of periodic gravity-capillary water waves} 
\date{}

\author{M. Berti, R. Feola, L. Franzoi}



\maketitle

\begin{abstract}
We consider the gravity-capillary water waves equations for a bi-dimensional fluid 
with a  periodic one-dimensional free surface. We prove a rigorous reduction of this system to Birkhoff normal form up to cubic degree.  Due to the possible presence of
$ 3 $-waves resonances for general values of 
gravity, surface tension  and depth,
such normal form may be not trivial and exhibit  
a chaotic dynamics 
(Wilton-ripples). Nevertheless we prove that 
for {\it all} the values of gravity, surface tension and depth,  
initial data that are of size $ \e $ in a sufficiently smooth Sobolev space lead to a 
solution that  
remains in an $ \e $-ball of the same Sobolev space up times of order $ \e^{-2}$.
We exploit that 
the $ 3$-waves resonances 
are 
finitely many, and  the Hamiltonian nature of the Birkhoff normal form.
\end{abstract}


\section{Introduction and main results} 

We consider an incompressible and irrotational perfect fluid, under the action of gravity,
occupying at time
$t$ the bi-dimensional domain
\[
{\mathcal D}_{\eta} := \big\{ (x,y)\in \T \times\R \, ;  \ - h <y<\eta(t,x) \big\}\,,  
\quad  \T := \R \slash (2 \pi \Z) \, ,
\]
periodic in the horizontal variable, with  depth $ h $ which may be finite or infinite. 
The time-evolution of the fluid is determined by a system of equations for 
the free surface $ \eta(t,x)$ 
and the function $ \psi(t,x) := \Phi(t,x,\eta(t,x))$
 where $\Phi$ is the velocity potential in the fluid domain.
Given the shape $\eta(t,x)$ of the domain $ {\cal D}_\eta $ 
and the Dirichlet value $\psi(t,x)$ of the velocity potential at the top boundary, one 
recovers 
$\Phi(t,x,y)$ as the unique solution of the elliptic problem 
$$
\Delta \Phi = 0  \ \text{in } 
{\cal D}_\eta  \, , \quad 
\partial_y \Phi = 0  \ \text{at } y = - h  \, , \quad 
\Phi = \psi \  \text{on } \ \{y = \eta(t,x)\} \, . 
$$
According to Zakharov \cite{Zak1} and Craig-Sulem \cite{CrSu}, the
$(\eta,\psi)$ variables evolve under the system
\begin{equation} \label{WW1}
\left\{\begin{aligned}
 &   \partial_t \eta = G(\eta)\psi \\
&\partial_t\psi = \dps -g\eta  -\frac{1}{2} \psi_x^2 
+  \frac{1}{2}\frac{(\eta_x  \psi_x + G(\eta)\psi)^2}{1+\eta_x^2}+\kappa 
\partial_x\Big( \frac{\eta_x}{(1+\eta_x^{2})^{\frac{1}{2}}}\Big) 
\end{aligned}\right.
\end{equation} 
where $g>0$ is the acceleration of gravity, $\kappa>0$ the surface tension,
and $G(\eta)$ is 
 the nonlocal Dirichlet-Neumann operator
defined by 
$ G(\eta)\psi := (\partial_y\Phi -\eta_x \partial_x\Phi)(t,x,\eta(t,x)) $.

As  observed by Zakharov  \cite{Zak1}, 
the equations \eqref{WW1} are the Hamiltonian system  
$$
 \pa_t \eta = \nabla_\psi H (\eta, \psi)\,,
 \qquad \pa_t \psi = - \nabla_\eta H (\eta, \psi)\,,
$$
where $ \nabla $ denotes the $ L^2 $-gradient, with Hamiltonian
\be\label{Hamiltonian}
H(\eta, \psi) := \frac12 \int_\T \psi \, G(\eta ) \psi \, dx + \frac{g}{2} \int_{\T} \eta^2  \, dx+
\kappa\int_{\mathbb{T}} \sqrt{1 + \eta_x^2  }\,dx
\ee
given by the sum of the kinetic and potential energy of the fluid and the 
energy of the capillary forces. We remind that the Poisson bracket between
two functions $H(\eta,\psi)$, $F(\eta,\psi)$ is
\begin{equation}\label{PoiBra}
\{H,F\}=\int_{\mathbb{T}}(\nabla_{\eta}H \nabla_{\psi}F
-\nabla_{\psi}H \nabla_{\eta}F) dx\,.
\end{equation}
The ``mass'' $ \int_\T \eta \, dx$ is a prime integral of \eqref{WW1} and, 
with no loss of generality, we can fix it to zero by shifting the $y$ coordinate. 
Moreover \eqref{WW1} is invariant under spatial translations
and Noether's theorem implies that the momentum $ \int_{\T} \eta_x (x) \psi (x)  \, dx $
is a prime integral of \eqref{WW1}.

Let $ H^s (\T) := H^s $, $ s\in \R $, denote the Sobolev spaces of $ 2 \pi $-periodic functions of $ x $. The variable $ \eta $ belongs
to the subspace  $H^s_0(\T)$  of $H^s(\T)$ of zero average functions (for some 
positive $ s  $). 
On the other hand, the variable $ \psi $ belongs to  the homogeneous Sobolev space
${\dot H}^s (\T):= H^s (\T) \slash_{ \sim}$ 
obtained by the equivalence relation
$\psi_1 (x) \sim \psi_2 (x)$ if and only if $ \psi_1 (x) - \psi_2 (x) = c $ is a constant.
This is coherent 
with the fact that only the velocity field $ \nabla_{x,y} \Phi $ has physical meaning, 
and the velocity potential $ \Phi $ is defined up to a constant.
For simplicity
we denote the equivalence class $ [\psi] $ in $\dot{H}^s$ by $ \psi $ and,  
since the quotient map induces an isometry of $ {\dot H}^s (\T) $ onto $ H^s_0 (\T) $, 
we conveniently identify $ \psi $ with a function with zero average.

The water waves equations \eqref{WW1}  are a quasi-linear system.   
In the last years, they have been object of intense research 
both in the periodic setting $ x \in \T^{d} $, $ d = 1, 2 $, 
and in the dispersive 
case $ x \in \R^{d} $ 
with data decaying at infinity.
A fundamental  difference between these cases concerns the 
dynamical behavior of the 
 linearized water waves equations at $ (\eta, \psi) $= 0. 
In $d=1$ they  are
\be\label{LINWW}
\left\{\begin{aligned}
 &   \partial_t \eta = G(0) \psi  \\
&\partial_t\psi = \dps -g\eta + \kappa \eta_{xx} \, , 
\end{aligned}\right.  \qquad G(0) = \tanh (h D) D \, , \quad D := \frac{1}{\ii} \pa_x \, ,  
\ee
with dispersion relation 
 \begin{equation}\label{dispersionLaw}
\mk(\xi):=\mk_{g,\kappa,h}(\xi):=
( \kappa |\xi|^{3} + g|\xi | )^{\frac{1}{2}} \big(  \tanh(h |\xi| )\big)^{\frac{1}{2}} \, . 
\end{equation}
Notice that, if $ h = + \infty $, the Dirichlet-Neumann operator is $ G(0) = |D | $ and  
 the dispersion relation $ \mk(\xi) =  ( \kappa |\xi|^{3} + g|\xi | )^{\frac{1}{2}}  $. 
In the case $ x \in \R^d $, the solutions of \eqref{LINWW} disperse to zero as $ t \to + \infty $.
On the contrary, 
if $ x \in \T^d $, all the solutions 
of the linear system \eqref{LINWW} are time periodic, or quasi-periodic, 
or almost periodic in time, 
with linear frequencies of oscillations $ \mk (j) $, $ j \in \Z^d $. 
In such a case a natural tool to analyze the nonlinear dynamics of 
\eqref{WW1}, at least for small amplitude solutions,  is normal form theory, that is particularly difficult 
due to the quasi-linear nature of the nonlinearity. 
In \cite{CS}, Craig and Sulem developed 
a Birkhoff normal form analysis for \eqref{WW1} 
starting from the Taylor expansion of the Hamiltonian \eqref{Hamiltonian}, 
$$
H =H^{(2)}+H^{(3)} +H^{(\geq4)} \, , 
$$
where (up to a constant)
\begin{equation}\label{Hamvera}
\begin{aligned}
& H^{(2)} := \frac12 \int_{\mathbb{T}}\psi G(0) \psi\,dx+ 
\frac{g}{2} \int_{\mathbb{T}}\eta^2 + 
 \frac{\kappa}{2} \int_{\T} \eta_x^2 \, dx \\
& H^{(3)} := \frac{1}{2} \int_{\T} \psi \Big( D \eta D - G(0) \eta G(0) \Big) \psi \, dx  
\end{aligned}
\end{equation}
and 
$H^{(\geq4)}$ collects all the terms of homogeneity in $ (\eta,\psi) $
greater or equal than $4$.
Unfortunately, in this Taylor expansion 
there is a priori no
control of the unboundedness of the Hamiltonian vector field associated to 
$ H^{(\geq 4)} $.

Normal form theory for gravity-capillary water waves,  
even in $ x $, has been developed in Berti-Delort 
\cite{BD}, proving, for most values of the parameters
$ (g, \kappa) $,   
an almost global  existence result for the solutions of \eqref{WW1}
 in Sobolev spaces.   
 A key point is, in analogy with the KAM theory approach in \cite{BM}, 
to transform the unbounded water waves 
vector field to a paradifferential one with constant coefficient symbols, up to smoothing 
operators.  
Very recently, Birkhoff normal form and long time existence results 
for periodic pure gravity water waves
in infinite depth, where
no parameters are available, have been proved  in Berti-Feola-Pusateri \cite{BFP}.
A key point is a normal form uniqueness 
argument which allows to identify 
the paradifferential normal form with the formal 
Hamiltonian 
 Birkhoff normal form up to fourth degree, which turns out to be completely integrable.  

Complementing these works,  the goal of this paper 
is to prove  that, for {\it any} value 
of $ (\kappa, g, h) $, $ \kappa > 0 $,  the gravity-capillary water waves system
 \eqref{WW1} is conjugated to its
Birkhoff normal form, up to cubic remainders that satisfy energy estimates 
(Theorem \ref{BNFtheorem}), 
and that all the solutions of \eqref{WW1}, 
with initial data  of size $ \e $ in a sufficiently smooth Sobolev space, exist
and remain in an $ \e $-ball of the same Sobolev space up times of order $ \e^{-2}$, see Theorem \ref{thm:main2}. Let us state precisely these results. 
\\[1mm]
{\it Main results.}
To state our first main result,  
concerning the rigorous reduction of  system \eqref{WW1} 
to its Birkhoff normal form up to cubic degree, 
 let us assume that, for $s$ large enough and some $T>0$, 
we have a classical solution
\begin{align}\label{etapsi}
(\eta,\psi)\in C^{0}([-T,T]; H^{s+\frac{1}{4}}_0 \times {\dot H}^{s-\frac{1}{4}})
\end{align}
of the Cauchy problem for \eqref{WW1}.
The existence of such a solution, at least for small enough $ T $, 
is guaranteed by local well-posedness theory, 
see the literature at the end of the section.

\begin{theorem}{\bf (Cubic Birkhoff normal form)} 
\label{BNFtheorem}
Let $\kappa> 0$, $g\geq 0$ and $h \in (0, + \infty]$.
There exist $ s \gg 
1 $  and $ 0 < \bar{\e} \ll 1  $,   such that, if 
$(\eta,\psi) $ is a solution of \eqref{WW1} satisfying \eqref{etapsi} with  
 \begin{equation}\label{hypoeta}
 \sup_{t\in [-T,T]}\big( 
 \|\eta\|_{H_0^{s+\frac{1}{4}}}+\|\psi\|_{\dot{H}^{s-\frac{1}{4}}}\big)
 \leq \bar \e\,,
 \end{equation}
then there exists a bounded 
and invertible linear operator 
$ \mathfrak{B}(\eta, \psi): 
H^{s+\frac{1}{4}}_0 \times {\dot H}^{s-\frac{1}{4}} \to  \dot H^s $,  
which depends (nonlinearly) on $ (\eta, \psi) $, 
such that
\begin{equation}\label{Germe}
\begin{aligned}
 {\|\mathfrak{B}(\eta, \psi) \|}_{\mathcal{L}(H^{s+\frac{1}{4}}_0 \times {\dot H}^{s-\frac{1}{4}}, \dot{H}^{s})}
	+ {\|(\mathfrak{B}(\eta, \psi))^{-1} \|}_{\mathcal{L}(\dot{H}^{s}, 
	H^{s+\frac{1}{4}}_0 \times {\dot H}^{s-\frac{1}{4}})} \leq & \\ 
 1+ 
	C(s) (\|\eta\|_{H_0^{s+\frac{1}{4}}}+\|\psi\|_{\dot{H}^{s-\frac{1}{4}}}) \, , &  
	\end{aligned}
\end{equation}
and the variable 
$z:=\mathfrak{B}(\eta, \psi)[\eta, \psi]$
satisfies the equation
\begin{equation}\label{theoBireq}
\pa_{t}z =\ii \mk(D)z +\ii \pa_{\bar{z}}H_{BNF}^{(3)}(z,\bar{z}) 
+ {\mathcal X}^{+}_{\geq 3}
\end{equation}
where:  

\vspace{0.5em}
\noindent
$(0)$ $\mk(D) $ is the Fourier multiplier with symbol defined in \eqref{dispersionLaw} 
and  
$ \pa_{\bar{z}} $ is defined in \eqref{HamVecField},

\noindent 
$(1)$ the Hamiltonian  $H^{(3)}_{BNF}(z, \bar z )$  has the form
\begin{equation}\label{H3bnf}
\begin{aligned}
H^{(3)}_{BNF}(z, \bar z)& =
\sum_{\substack{\s_1j_1+\s_2j_2+\s_2j_3=0, \, \s_i = \pm \, , \\
\s_1\mk(j_1)+\s_2\mk(j_2)+\s_3\mk(j_3)=0, j_i \in \mathbb{Z}\setminus\{0\} } } \!\!\!\!\!
H_{j_1,j_2,j_3}^{\s_1,\s_2,\s_3} z_{j_1}^{\s_1}z_{j_2}^{\s_2}z_{j_3}^{\s_3}
\end{aligned}
\end{equation}
where $ z_j^+ := z_j $, $ z_j^{-} := \ov{z_j} $
and  $z_j$ denotes the $ j  $-th Fourier coefficient of the function $ z $
(see \eqref{complex-uU}),  
and the coefficients
\begin{equation}\label{coeffH3}
H_{j_1,j_2,j_3}^{\s_1,\s_2,\s_3}:=\frac{\ii \s_2 }{8\sqrt{\pi}}
\big(\s_1\s_3j_1j_3+G_{j_1}G_{j_3}\big)
\frac{\Lambda(j_2)}{\Lambda(j_1)\Lambda(j_3)}
\end{equation}
with $\Lambda (j) $ defined in \eqref{compl1} and 
$G_{j}:=\tanh(hj)j$;

\vspace{0.5em}
\noindent
$(2)$ $ {\mathcal X}^{+}_{\geq 3}  := {\mathcal X}^{+}_{\geq 3} (\eta,\psi,z,\bar{z})$ 
satisfies 
$ \| {\mathcal X}^{+}_{\geq 3} \|_{{\dot H}^{s-\frac32}} \leq C(s) \| z \|_{\dot{H}^s}^3 $
and the ``energy estimate''
\begin{equation}\label{theoBirR}
{\rm Re}\int_{\T}|D|^s {\mathcal X}^{+}_{\geq 3} \cdot \bar{|D|^s z} \, dx
\leq C(s)  \|z\|_{\dot{H}^s}^{4}  \, .
\end{equation} 
\end{theorem}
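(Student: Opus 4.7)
\medskip

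\noindent\textbf{Proof plan.} The strategy is a paradifferential Birkhoff normal form reduction in the spirit of \cite{BD} and \cite{BFP}, adapted to the gravity--capillary case with all parameters $(g,\kappa,h)$. First I would rewrite \eqref{WW1} in the complex scalar variable
$u:=\Lambda(D)\eta+\ii\Lambda(D)^{-1}\psi$ (with $\Lambda(D)$ the square root of the symbol that diagonalizes $H^{(2)}$), so that the linear flow becomes simply $\ii\mk(D)$ and momentum conservation reads as the translation symmetry $j_1+j_2+j_3=0$ (with signs). Then I would paralinearize the Dirichlet--Neumann operator $G(\eta)\psi$ using the classical expansion plus Bony paralinearization, introducing the Alinhac good unknown $\omega:=\psi-\opbw(B)\eta$ with $B=(G(\eta)\psi+\eta_x\psi_x)/(1+\eta_x^2)$. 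This yields an equation of the form
\[
\pa_t u \,=\, \ii\mk(D)u+\opbw(a_{3/2}(u;x,\xi)+a_1(u;x,\xi)+\dots)u+\mathcal{R}(u)u,
\]
with $a_k$ symbols homogeneous of degree $1$ in $u$ at leading order and $\mathcal{R}$ a smoothing remainder.

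Next I would carry out the block--diagonalization / symbol reduction to constant coefficients at orders $3/2,\,1,\,1/2,\,0,\dots$ down to any prescribed order $-\rho$, by conjugating with bounded paradifferential flows generated by symbols of degree zero. Since at this stage only boundedness in Sobolev norms is needed, each such flow contributes factors $1+O(\|u\|_{H^s})$ to the map $\mathfrak B$ and its inverse, in agreement with \eqref{Germe}. After this step the equation reads $\pa_t z=\ii\mk(D)z+Q(z)+\mathcal{R}_{\geq3}^+$ with $Q$ a bounded quadratic operator.

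The Birkhoff step at cubic order then eliminates every quadratic monomial in $Q$ whose triad indices $(j_1,j_2,j_3,\sigma_1,\sigma_2,\sigma_3)$ satisfy the momentum relation $\sigma_1j_1+\sigma_2j_2+\sigma_3j_3=0$ but not the resonance $\sigma_1\mk(j_1)+\sigma_2\mk(j_2)+\sigma_3\mk(j_3)=0$. Because the small divisor is a nonzero real number bounded from below on an integer momentum hypersurface, the required solution of the cohomological equation is a bounded Fourier multiplier in each frequency patch, hence defines a bounded symplectic corrector. The triads that survive are, by Lemma style arguments on $\mk_{g,\kappa,h}$, only finitely many (the Wilton ripples); they give rise to a quadratic vector field $\ii\pa_{\bar z}H^{(3)}_{\rm BNF}$. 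The key identification is that this paradifferential resonant part coincides with the formal Hamiltonian Birkhoff normal form computed from $H^{(3)}$ in \eqref{Hamvera}: I would prove it by a normal form uniqueness argument as in \cite{BFP}, comparing Hamiltonian Poisson brackets $\{H^{(2)},\chi\}$ with the cohomological equation in the symbolic formulation, which fixes the coefficients to be exactly \eqref{coeffH3}.

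Finally, for the energy estimate \eqref{theoBirR}, the point is that the whole construction preserves the reality/Hamiltonian structure, so $\mathcal{X}^+_{\geq 3}$ is (up to smoothing) the $+$--component of a real Hamiltonian vector field whose Hamiltonian density is homogeneous of degree $\geq 4$ in $z$. Testing with $|D|^{2s}\bar z$ and taking the real part, the leading transport--type contributions cancel by integration by parts (or by the antisymmetric structure of paradifferential commutators), while the residual smoothing terms are bounded in $H^{s-3/2}$, giving the quartic bound $C(s)\|z\|_{\dot H^s}^4$. The main obstacle is precisely this last bookkeeping: keeping all the bounded conjugations compatible both with the Hamiltonian structure (so that the uniqueness identification of $H^{(3)}_{\rm BNF}$ applies) and with the energy estimate (so that no unbounded commutator escapes the Re cancellation). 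The quasi-linear nature of \eqref{WW1} forces this to be done in the Alinhac good--unknown formulation, which is what makes the program go through for \emph{all} $(g,\kappa,h)$.
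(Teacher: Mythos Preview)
Your overall architecture matches the paper's --- good-unknown paralinearization, diagonalization to constant-coefficient symbols (Proposition~\ref{regolo} here, following \cite{BD}), a Poincar\'e--Birkhoff step on the quadratic part, and a normal-form uniqueness argument \`a la \cite{BFP} to identify the resonant remainder with $X_{H^{(3)}_{BNF}}$. But there are two substantive gaps where your outline would stall.

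\smallskip
\textbf{(i) Why is the quadratic term bounded (in fact smoothing)?} After the constant-coefficient reduction you write ``$\pa_t z=\ii\mk(D)z+Q(z)+\mathcal R^+_{\ge 3}$ with $Q$ a bounded quadratic operator'' and then run Birkhoff on $Q$. But the reduction of Proposition~\ref{regolo} leaves a paradifferential term $\opbw\big((1+\underline\zeta)\mk(\xi)+H(U;\xi)\big)$ of order $3/2$ whose $1$-homogeneous component is, a priori, an \emph{unbounded} quadratic operator. The paper's key step (Lemma~\ref{lem:41}) is that this $1$-homogeneous symbol is \emph{identically zero}: it is both independent of $x$ (by the constant-coefficient reduction) and translation-invariant in the sense \eqref{def:tr-in}, and a nonzero symbol of the form \eqref{homosimbo2} cannot be $x$-independent (Remark~\ref{indipx}). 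Hence the only quadratic piece left is the smoothing remainder $\mathtt R_1(U)\in\widetilde{\mathcal R}^{-\rho}_1$, on which the homological equation can be solved with uniform small-divisor bounds (Lemma~\ref{stimedalbasso}). Without this observation you cannot justify that the Birkhoff generator is bounded, and the cohomological step does \emph{not} produce a bounded corrector when $Q$ is genuinely of positive order.

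\smallskip
\textbf{(ii) The transformations are not symplectic.} You call the Birkhoff corrector a ``bounded symplectic corrector'' and later argue the energy estimate from ``the whole construction preserves the \dots\ Hamiltonian structure''. In the paper (and in \cite{BD,BFP}) the paradifferential flows and the Birkhoff flow \eqref{BNFstep1} are only real-to-real, \emph{not} symplectic; this is precisely why the identification $\mathtt R_1^{res}(Y)[Y]=X_{H^{(3)}_{BNF}}$ (Proposition~\ref{lem:ide3}) requires the separate uniqueness argument via $\Pi_{\ker}$ in \eqref{PikerdiSez4}--\eqref{sononelKerSez4}, rather than following for free from symplecticity. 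Correspondingly, the energy estimate \eqref{theoBirR} is not obtained from a Hamiltonian cancellation: it holds because the cubic paradifferential symbol $\mathcal H_{\ge 2}(U;\xi)$ is $x$-independent with $\Im\mathcal H_{\ge 2}\in\Gamma^0$ (see \eqref{quasi-real}), so $\opbw(\mathcal H_{\ge 2})$ is skew-adjoint at top order, and the remaining piece $\mathfrak R_{\ge 2}$ is genuinely smoothing. Your ``transport-type contributions cancel by integration by parts'' is the right intuition, but the mechanism is the $x$-independence/order-$0$ imaginary part, not preserved Hamiltonian structure.
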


The main point of Theorem \ref{BNFtheorem}  is the construction of the bounded and invertible transformation $ \mathfrak{B}(\eta, \psi)  $ in \eqref{Germe} which recasts 
\eqref{WW1} in the Birkhoff normal form \eqref{theoBireq}, where the cubic vector field 
satisfies the energy estimate \eqref{theoBirR}. 
We remark that   Craig-Sulem \cite{CS}   constructed 
a bounded and symplectic transformation 
that conjugates \eqref{WW1}
to its cubic Birkhoff normal form, but the cubic terms 
 of the transformed vector field do not satisfy  energy estimates.
  
 We underline that,
for general values of gravity, surface tension and depth 
$ (g, \kappa, h)$,
the ``resonant" Birkhoff normal form Hamiltonian 
 $H^{(3)}_{BNF}$ in \eqref{H3bnf} is non zero, because 
the system 
\be\label{3-w-res}
\s_1 \mk({j_1})+\s_2 \mk({j_2})+\s_3 \mk({j_3})=0 \, , \qquad 
\s_1j_1+\s_2j_2+\s_3j_3=0 \, , 
\ee
for $\s_{j} = \pm $ , 
may possess integer  solutions 
$j_1,j_2,j_3\neq 0 $, known as $3$-waves resonances
(cases with absence of $3$-waves resonances are discussed in 
remark \ref{rem:no-reso}). 
The  resonant Hamiltonian $H^{(3)}_{BNF} $
gives rise to a complicated dynamics, 
which,  in fluid mechanics,  is responsible for the phenomenon of the 
Wilton ripples.
Nevertheless we are able to prove the following 
long time stability result.

\begin{theorem}{\bf (Quadratic life span)}\label{thm:main2}
For any  value of $(\kappa, g, h) $, $ \kappa > 0 $, $ g \geq 0 $, $ h \in (0, + \infty] $, 
there exists $ s_0 > 0 $ and,  
for all $ s \geq s_0 $, there are $ \e_0 >  0 $, $ c > 0 $, $ C > 0 $, such that,  
for any $0< \e\leq \e_0$,
 any initial data 
\begin{equation}\label{smallness}
 (\eta_0, \psi_0)\in H_0^{s+\frac{1}{4}}(\mathbb{T},\mathbb{R})\times
  \dot{H}^{s-\frac{1}{4}}(\mathbb{T},\mathbb{R}) \quad { with}\quad
  \|\eta_0\|_{H_0^{s+\frac{1}{4}}}+\|\psi_0\|_{\dot{H}^{s-\frac{1}{4}}}\leq \e \, ,
\end{equation} 
there exists a unique classical solution $(\eta,\psi)$
of \eqref{WW1} belonging to 
\[
C^{0}\Big([-T_\e,T_{\e}], H_0^{s+\frac{1}{4}}(\mathbb{T},\mathbb{R})\times
  \dot{H}^{s-\frac{1}{4}}(\mathbb{T},\mathbb{R})
\Big)\, \quad { with} \quad T_{\e}\geq c\e^{-2}\,,
\]
satisfying $(\eta,\psi)_{|_{t=0}}=(\eta_0,\psi_0)$. Moreover
\begin{equation}\label{smallness2}
\sup_{t\in [-T_{\e},T_{\e}]}\big( 
 \|\eta\|_{H_0^{s+\frac{1}{4}}}+\|\psi\|_{\dot{H}^{s-\frac{1}{4}}}\big)\leq C\e\,.
\end{equation}
\end{theorem}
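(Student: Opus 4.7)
The plan is to invoke Theorem~\ref{BNFtheorem} to put the equation into the Birkhoff normal form \eqref{theoBireq}, and then close an a priori bootstrap estimate on $\|z(t)\|_{\dot H^s}$. Combined with the local well-posedness of \eqref{WW1} (which lets one both start the solution and extend it as long as $\|(\eta,\psi)\|_{H_0^{s+1/4}\times \dot H^{s-1/4}}$ stays bounded) and with the bilipschitz bound \eqref{Germe} (which transfers bounds on $z$ back to $(\eta,\psi)$), it suffices to prove
\begin{equation*}
\|z(t)\|_{\dot H^s} \leq 2\e \quad \text{for all } |t|\leq c\e^{-2},\qquad \text{given } \|z(0)\|_{\dot H^s}\leq \e.
\end{equation*}

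The first structural observation is that the resonant set $\mathcal{S} := \{k\in\Z\setminus\{0\}: k$ appears in some solution $(j_1,j_2,j_3,\s_1,\s_2,\s_3)$ of \eqref{3-w-res}$\}$ is \emph{finite}. Indeed, $\Omega(\xi)\sim \kappa^{1/2}|\xi|^{3/2}$ as $|\xi|\to\infty$ and the map $\xi\mapsto \xi^{3/2}$ is strictly superadditive, so for $\max_i |j_i|$ large the asymptotic identity $\sum \s_i|j_i|^{3/2}=0$ under $\sum \s_i j_i=0$ fails by a margin of order $(\max|j_i|)^{3/2}$; the lower-order corrections from $g$ and $\tanh(h\cdot)$ are only $O(|\xi|^{1/2})$ and cannot compensate. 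Let $\Pi_\mathcal{S}$ be the corresponding Fourier projector and write $z = z_\mathcal{S} + z_\mathcal{S}^\perp$. By \eqref{H3bnf}, $H^{(3)}_{BNF}$ depends only on the modes indexed by $\mathcal{S}$, so $\partial_{\bar z_k} H^{(3)}_{BNF} = 0$ for $k\notin\mathcal{S}$. Projecting \eqref{theoBireq} and pairing with $z_\mathcal{S}^\perp$ in $\dot H^s$, the linear dispersion contributes zero by skew-adjointness of $\ii \Omega(D)$, the cubic Birkhoff term drops by the above, and only the remainder survives: $\tfrac{d}{dt}\|z_\mathcal{S}^\perp\|_{\dot H^s}^2 = 2\,\Re\langle |D|^s\mathcal{X}^+_{\geq 3},|D|^s z_\mathcal{S}^\perp\rangle$. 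Writing $z_\mathcal{S}^\perp = z - z_\mathcal{S}$, the pairing against $z$ is controlled by \eqref{theoBirR}, while the pairing against $z_\mathcal{S}$ is bounded by $\|\mathcal{X}^+_{\geq 3}\|_{\dot H^{s-3/2}}\,\|z_\mathcal{S}\|_{\dot H^{s+3/2}} \leq C\|z\|_{\dot H^s}^3\,\|z_\mathcal{S}\|_{\dot H^s}$ using the regularity of $\mathcal{X}^+_{\geq 3}$ stated in Theorem~\ref{BNFtheorem} together with the equivalence of Sobolev norms on the finite-dimensional range of $\Pi_\mathcal{S}$. Hence $\tfrac{d}{dt}\|z_\mathcal{S}^\perp\|_{\dot H^s}^2 \leq C\|z\|_{\dot H^s}^4$.

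On the resonant part, the projected equation is a finite-dimensional Hamiltonian ODE with Hamiltonian $H_\mathcal{S}(z_\mathcal{S}) := \sum_{k\in\mathcal{S}} \Omega(k)|z_k|^2 + H^{(3)}_{BNF}(z_\mathcal{S})$, driven by the forcing $\Pi_\mathcal{S}\mathcal{X}^+_{\geq 3}$. Since $\{H_\mathcal{S},H_\mathcal{S}\}=0$, one computes $\tfrac{d}{dt}H_\mathcal{S}(z_\mathcal{S}(t)) = \Re\langle \nabla H_\mathcal{S}, \Pi_\mathcal{S}\mathcal{X}^+_{\geq 3}\rangle$, which is $O(\|z\|\cdot \|z\|^3) = O(\|z\|^4)$ thanks to $\|\nabla H_\mathcal{S}\| \lesssim \|z\|$ and $\|\Pi_\mathcal{S}\mathcal{X}^+_{\geq 3}\| \lesssim \|\mathcal{X}^+_{\geq 3}\|_{\dot H^{s-3/2}} \lesssim \|z\|^3$ (again by finite-dimensionality of $\Pi_\mathcal{S}$). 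Coercivity of $H_\mathcal{S}$ on the resonant space ($\sum \Omega(k)|z_k|^2 \sim \|z_\mathcal{S}\|_{\dot H^s}^2$ and $|H^{(3)}_{BNF}(z_\mathcal{S})|\leq C\|z_\mathcal{S}\|^3$) yields $\|z_\mathcal{S}(t)\|_{\dot H^s}^2 \leq C\e^2 + C\int_0^t \|z\|^4\,ds$. Summing the two estimates and inserting the bootstrap $\|z\|\leq 2\e$ on $|t|\leq c\e^{-2}$ gives $\|z(t)\|_{\dot H^s}^2 \leq (1+C'c)\e^2$, which improves to $\|z\|\leq 2\e$ once $c$ and $\e$ are chosen small.

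The main obstacle is that, because the $3$-wave resonances generically persist (the Wilton-ripple phenomenon encoded by $H^{(3)}_{BNF}$ in \eqref{H3bnf}), the norm $\|z\|_{\dot H^s}^2$ is \emph{not} approximately conserved by the normal form $H^{(2)}+H^{(3)}_{BNF}$, so a naive energy estimate of cubic order fails. The resolution, as above, is to confine the non-trivial normal-form dynamics to the finite-dimensional subspace $\Pi_\mathcal{S}$, where equivalence of norms converts the conservation of $H^{(2)}+H^{(3)}_{BNF}$ into a coercive almost-conserved energy, while on the orthogonal complement the cubic Birkhoff vector field is identically zero and only the quartic energy bound \eqref{theoBirR} needs to be invoked.
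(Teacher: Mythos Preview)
Your proof is correct and follows essentially the same strategy as the paper's Lemma~\ref{lem:SE}: finiteness of the $3$-wave resonances (the paper's Lemma~\ref{stimedalbasso}), a splitting $z=z_L+z_H$ into finitely many low modes and the complement, an almost-conserved coercive energy on the low-mode part, and the quartic energy estimate \eqref{theoBirR} on the complement, closed by a standard bootstrap. The only minor variation is that the paper takes $H^{(2)}_\C(z_L)=\sum\Omega(j)|z_j|^2$ alone as the low-mode energy---using that $\{H^{(3)}_{BNF},H^{(2)}_\C\}=0$ because the normal form is resonant, so no coercivity argument is needed---whereas you use the full $H^{(2)}+H^{(3)}_{BNF}$ on $\Pi_{\mathcal S}$ and absorb the cubic piece by smallness; both choices close the same way.
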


Before presenting 
the literature about  $ \e^{-2} $  
existence results for water waves, 
we 
describe some key points concerning the proof of these results: 

\smallskip
1.  The long time existence  Theorem \ref{thm:main2} is deduced 
by the complete conjugation of the water waves vector field 
\eqref{WW1}
to its Birkhoff normal form up to degree $3$, 
Theorem \ref{BNFtheorem}, 
and not just on the construction of modified energies. 

2. Since the gravity-capillary dispersion relation 
$\sim |\xi |^{\frac{3}{2}}$ is superlinear, 
the  water waves equations \eqref{WW1} can be  reduced, as in \cite{BD},
to a paradifferential system with constant coefficient symbols, 
up to smoothing remainders (see Proposition \ref{regolo}).
At the beginning of Section \ref{sec:5} we remark that, thanks to
the $ x $-translation invariance of the equations, 
the symbols in \eqref{finalsyst} of the quadratic paradifferential vector fields  are actually zero. 
For this reason, in Section \ref{sec:5}, it just remains 
to perform a  Poincar\'e- Birkhoff normal form
on the quadratic smoothing vector fields, see Proposition \ref{cor:BNF10}.

3. Despite the fact that
our transformations are non-symplectic (as in \cite{BD}, \cite{BFP}),
 we prove, in Section \ref{sec:IDE}, 
 using a normal form identification argument 
 (simpler than  in \cite{BFP}),
 that the quadratic 
  Poincar\'e-Birkhoff normal form term in \eqref{finalsyst1012}
 coincides with the Hamiltonian vector field $ \ii \pa_{\ov{z}} H^{(3)}_{BNF} $
with  Hamiltonian \eqref{H3bnf}.

4. The Hamiltonian 
$ H^{(2)}_\C (z) :=  \int_{\T} \Omega (D) z  \cdot \overline{z} \, dx $ is a prime integral  
of the resonant  Birkhoff normal form
$ \pa_{t}z =\ii \mk(D)z +\ii \pa_{\bar{z}}H_{BNF}^{(3)}(z, \bar z) $.
Moreover, since \eqref{3-w-res} admits at most finitely many integer solutions
(Lemma \ref{stimedalbasso}) the Hamiltonian $ H^{(3)}_{BNF} (z, \bar z) 
= H^{(3)}_{BNF} (z_L, \bar z_L ) $
where $ z_L := \sum_{0 < |j| \leq \mathtt C} z_j e^{\ii j x } $, for some finite ${\mathtt C} > 0  $.
Therefore, 
 any solution $ z (t) $ of the Birkhoff normal form satisfies, for any $ s \geq 0 $, 
$$
\| z_L (t) \|_{\dot H^s}^2  \lesssim_s \| z_L (t) \|_{L^2}^2 \lesssim 
H^{(2)}_\C (z_L (t)) = H^{(2)}_\C (z_L (0) ) 
 \, , 
\  \  \forall t \in \R \, , 
$$
and $ \| z (t) \|_{\dot H^s}^2 $ remains bounded for all times. 
Finally  we deduce 
 the energy estimate \eqref{stimaconclu} for the solution  of 
 the whole system \eqref{theoBireq}, where we take into account the effect of 
 $ {\cal X}_{\geq 3 }^+ $, 
which implies stability for all $ |t| \leq c \e^{-2}$. 

\smallskip
\noindent 
{\it Literature.}
Local existence results for the initial value problem of the water waves equations go
back to the pioneering works of
Nalimov \cite{Nali}, Yosihara \cite{Yosi}, Craig \cite{Crai} for small initial data, Wu \cite{Wu97,Wu99} without smallness  assumptions, and Beyer-G\"unther \cite{BeGun} in presence of surface tension. 
For some recent results about gravity-capillary waves we refer to
 \cite{Ambro1, LannesLivre, Cou, ShaZ, ChrSta, ABZduke}. 
Clearly, specializing these results
 for initial data of size $ \e $, the solutions exist and stay regular for times
of order $ \e^{-1} $.
\\[1mm]
{\it Global well-posedness.} 
In the case $ x \in \R^d $ 
and the initial data decay sufficiently fast at infinity,  
global in time solutions  have been constructed 
exploiting the dispersive effects of the system. 
The first global in time solutions were proved in $ d = 2 $
 by  Germain-Masmoudi-Shatah \cite{GMS} and Wu
\cite{Wu2} for gravity water waves, 
by Germain-Masmoudi-Shatah \cite{GMS2}
for the capillary problem, and  
for  gravity-capillary water waves 
 by Deng-Ionescu-Pausader-Pusateri \cite{DIPP}. 
In $ d = 1 $ an almost global existence result for  gravity waves
was proved by Wu \cite{Wu},   
improved to global regularity by 
 Ionescu-Pusateri \cite{IP}, Alazard-Delort \cite{AlDe1},  
 Hunter and Ifrim-Tataru \cite{HuIT, IFRT}. For  capillary waves, 
global regularity was proved by  Ionescu-Pusateri \cite{IP3}
 and Ifrim-Tataru \cite{ifrTat}.
\\[1mm]
{\it Normal forms.} 
For space periodic water waves 
in absence of   $3$-waves resonances, 
existence results for times of order $ \e^{-2} $ 
have been obtained
in \cite{Wu,ToWu,IP,AlDe1,HuIT} 
for $ 1d$ pure gravity waves,  in \cite{IP3,ifrTat} for pure capillarity waves, 
and in \cite{HarIT} for $ 1d$  gravity waves over a flat bottom.
If $ x \in \T^2 $ we refer to \cite{IP2018} for an $ \e^{-\frac53 +}$ result.   
The only  $ \e^{-3} $  existence result for parameter independent water waves
is proved in \cite{BFP}, and it is based on the complete integrability of the fourth order 
Birkhoff normal form for $ 1d$ pure gravity  water waves in infinite depth. 

An almost global  existence
result 
of periodic gravity-capillary  water waves,  even in $ x $, 
for times $ O(\e^{-N})$ 
has been proved by Berti-Delort \cite{BD},
for almost all values of $ (g, \kappa) $.  
The restriction on the parameters $ (g, \kappa) $ arises to 
verify the absence of $ N $-waves interactions at any $ N $. The restriction 
to even in $ x $ solutions arises because the transformations in \cite{BD} are 
reversibility preserving but not symplectic. 
Almost global existence results for 
 fully nonlinear reversible Schr\"odinger equations have been proved in 
 \cite{FI}. 

We finally mention that time 
quasi-periodic solutions for \eqref{WW1} 
have been constructed in Berti-Montalto
\cite{BM} and, for pure gravity waves, in Baldi-Berti-Haus-Montalto \cite{BBHM}.

\smallskip
\noindent
{\it Acknowledgements}. 
The research was 
 partially 
supported by PRIN 2015 KB9WPT-005  
and ERC project FAnFArE,   n. 637510.

 \section{Functional Setting and Paradifferential calculus}
 
 In this section we  recall  definitions and results  
 of para-differential calculus 
 following Chapter $3$ 
 of  \cite{BD},  where we refer for more information. 
In the sequel we will deal with parameters
$$
s \geq s_0 \gg K \gg \rho \gg 1 \, . 
$$
Given  an interval $I \subset \R$, symmetric with respect to $t=0$, and  $s\in \R $, we define the space
$ C^K_{*}(I,{\dot{H}}^{s}(\T,\C^2)):=\bigcap_{k=0}^{K}C^{k}\big(I;\dot{H}^{s-\frac{3}{2}k}(\T;\C^2)\big)   $ 
endowed 
with the norm
$$
\sup_{t\in I}\|{U(t,\cdot)}\|_{K,s} \quad \mbox {where} 
\quad \|{U(t,\cdot)}\|_{K,s}:=\sum_{k=0}^{K}\|{\partial_t^k U(t,\cdot)}\|_{{\dot{H}}^{s-\frac{3}{2}k}}.
$$
With similar meaning we consider $C_{*}^{K}(I;\dot{H}^{s}(\T;\C))$.
We denote by $C^K_{*\R}(I,{\dot{H}}^{s}(\T,\C^2))$
the subspace of functions $U$ in $C^K_{*}(I,{\dot{H}}^{s}(\T,\C^2))$ such that $U=\vect{u}{\bar{u}}$.
Given $r > 0 $ we set
\begin{equation}\label{palla}
B_{s}^K(I;r):=\Big\{U\in C^K_{*}(I,\dot{H}^{s}(\T;\C^{2})):\, \sup_{t\in I}\|{U(t,\cdot)}\|_{K,s}<r \Big\} \, .
\end{equation}
 We expand a $ 2 \pi $-periodic function $ u(x) $, with zero average in $ x $, 
(which is identified with  $ u $ in the homogeneous space),  
 in Fourier series as 
\be\label{complex-uU}
u(x) = \sum_{n \in \Z\setminus\{0\} } \hat{u}(n)\frac{e^{\ii n x }}{\sqrt{2\pi}} \, , \qquad 
\hat{u}(n) := \frac{1}{\sqrt{2\pi}} \int_\T u(x) e^{-\ii n x } \, dx \, .
\ee
We also use the notation
$ u_n^+ := u_n := \hat{u}(n) $ and 
$  u_n^- := \ov{u_n}  := \ov{\hat{u}(n)} $. We set $ u^+ (x) := u(x) $ and 
$ u^{-} (x) := \ov{u(x)}$. 

For $n\in \N^*:= \N \! \smallsetminus \! \{0\}$ we denote by $\Pi_{n}$ the orthogonal projector from $L^{2}(\T;\C)$ 
to the subspace spanned by $\{e^{\ii n x}, e^{-\ii nx}\}$,  i.e.
$ (\Pi_{n}u)(x) := \hat{u}({n}) \frac{e^{\ii nx}}{\sqrt{2\pi}}+\hat{u}({-n})\frac{e^{-\ii nx}}{\sqrt{2\pi}} \, , $
and we denote by $ \Pi_n $ also the corresponding projector in  $L^{2}(\T,\C^{2})$.
If $\mathcal{U}=(U_1,\ldots,U_{p})$
is a $p$-tuple
 of functions, $\vec{n}=(n_1,\ldots,n_p)\in (\N^{*})^{p}$, we set
 $ \Pi_{\vec{n}}\mathcal{U}:=(\Pi_{n_1}U_1,\ldots,\Pi_{n_p}U_p) $. 
 
We deal with 
 vector fields $ X $  
which satisfy 
the {\it $x$-translation invariance} property
$$ 
X \circ \tau_\theta = \tau_\theta \circ X   \, , \quad  \forall\, \theta \in \R \, ,
\quad {\rm where}\quad \tau_\theta : u(x) \mapsto (\tau_\theta u)(x) := u(x + \theta )\,.
$$

\noindent{\bf Para-differential operators.}
We first give the definition of the classes of symbols,  
collecting Definitions $ 3.1$, $ 3.2$ and $ 3.4$ in \cite{BD}.
Roughly speaking, the class $\widetilde{\Gamma}_{p}^{m}$ contains homogeneous symbols of order $m$ and homogeneity $p$ in $U$, while the class $\Gamma_{K,K',p}^{m}$ contains non-homogeneous symbols of order $m$ which
vanish at degree at least $p$ in $U$, and that are $(K-K')$-times differentiable in $t$.

\begin{definition}{\bf (Classes of  symbols)}\label{pomosimb}
Let $m\in\R$, $p, N \in \N$ with $ p \leq N $,  $ K, K' $ in $\N$ with $K'\leq K$, $r>0 $.

\noindent
$(i)$ {\bf  $p$-homogeneous symbols.} We denote by $\widetilde{\Gamma}_{p}^{m}\!$ the space of symmetric $p$-linear maps
from $(\dot{H}^{\infty}(\T;\C^{2}))^{p}$ to the space of $C^{\infty}$ functions of $(x,\x)\in \T\times\R$, 
$ \mathcal{U}\to ((x,\x)\to a(\mathcal{U};x,\x)) $,  
satisfying the following. There is $\mu>0$ and,  
for any $\alpha,\beta\in \N $,  there is $C>0$ such that
\begin{equation}\label{pomosimbo1}
|\pa_{x}^{\alpha}\pa_{\x}^{\beta}a(\Pi_{\vec{n}}\mathcal{U};x,\x)|\leq C | \vec{n} |^{\mu+\alpha}\langle\x\rangle^{m-\beta}
\prod_{j=1}^{p}\|\Pi_{n_j}U_{j}\|_{L^{2}} 
\end{equation}
for any $\mathcal{U}=(U_1,\ldots, U_p)$ in $(\dot{H}^{\infty}(\T;\C^{2}))^{p}$,
and $\vec{n}=(n_1,\ldots,n_p)\in (\N^*)^{p}$. 
Moreover we assume that, if for some $(n_0,\ldots,n_{p})\in \N\times(\N^*)^{p}$,
$ \Pi_{n_0}a(\Pi_{n_1}U_1,\ldots, \Pi_{n_p}U_{p};\cdot)\neq0 $, 
then there exists a choice of signs $\s_0,\ldots,\s_p\in\{-1,1\}$ such that $\sum_{j=0}^{p}\s_j n_j=0$.
For $p=0$ we denote by $\widetilde{\Gamma}_{0}^{m}$ the space of constant coefficients symbols
$\x\mapsto a(\x)$ which satisfy \eqref{pomosimbo1} with $\al=0$
and the right hand side replaced by $C\langle \x\rangle^{m-\beta}$. 
In addition we require the translation invariance property
\be\label{def:tr-in}
a( \tau_\teta {\cal U}; x, \xi) =  a( {\cal U}; x + \theta, \xi) \, , \quad \forall \theta \in \R \, . 
\ee

\noindent
$(ii)$ {\bf Non-homogeneous symbols.} Let $ p \geq 1 $. We denote by $\Gamma^m_{K,K',p}[r]$ the space 
of functions $(U;t,x,\xi)\!\mapsto \!a(U;t,x,\xi)$, defined for $U\in B_{s_0}^K(I;r)$, for some large enough $ s_0$, with complex values such that for any 
$0\leq k\leq K-K'$, any $\s\geq s_0$, there are $C>0$, $0<r(\s)<r$ and for any $U\in B_{s_0}^K(I;r(\s))\cap C^{k+K'}_{*}(I,{\dot{H}}^{\s}(\T;\C^{2}))$ and any $\alpha, \beta \in\N$, with $\alpha\leq \s-s_0$
\begin{equation}\label{simbo}
|{\partial_t^k\partial_x^{\alpha}\partial_{\xi}^{\beta}a(U;t,x,\xi)}|\leq C\langle\xi\rangle^{m-\beta} 
\|U\|^{p-1}_{k+K',s_0}
\|{U}\|_{k+K',\s} \, .
\end{equation}

\noindent
$(iii)$ {\bf Symbols.} We denote by $\Sigma\Gamma^{m}_{K,K',p}[r,N]$
the space of functions 
$(U,t,x,\x)\to a(U;t,x,\x)$ such that there are homogeneous symbols $a_{q}\in \widetilde{\Gamma}_{q}^{m}$,
 $q=p,\ldots, N-1$, and a non-homogeneous symbol $a_{N}\in \Gamma^{m}_{K,K',N}[r]$
such that
$ a(U;t,x,\x)=\sum_{q=p}^{N-1}a_{q}(U,\ldots,U;x,\x)+a_{N}(U;t,x,\x) $.
We denote by $\Sigma\Gamma^{m}_{K,K',p}[r,N]\otimes \mathcal{M}_{2}(\C)$ the space $2\times 2$
matrices with entries  in  $\Sigma\Gamma^{m}_{K,K',p}[r,N]$.
\end{definition}
  
As a consequence of the 
\emph{momentum} condition \eqref{def:tr-in} a symbol
$ a_{1} $ in the class $\widetilde{\Gamma}_{1}^{m}$, for some 
$ m\in \mathbb{R} $,
can be written as
\begin{equation}\label{homosimbo2}
a_{1}(U;x,\x)= 
\sum_{ \substack{j \in  \Z \setminus \{0\}, \s =\pm } }
(a_{1})^{\s}_{j}(\x)
u_{j}^{\s}e^{\ii \s j x}
\end{equation}
for some coefficients 
$(a_{1})^{\s}_{j}(\x)\in \mathbb{C}$, see \cite{BFP}.

\begin{remark}\label{indipx}
A symbol $ a_1\in \widetilde{\Gamma}_{1}^{m}$
of the form \eqref{homosimbo2}, independent of $x$, is actually $ a_1\equiv0$.
\end{remark}

We also define classes of functions in analogy with our classes of symbols.

\begin{definition}{\bf (Functions)} \label{apeape} Fix $N, p \in \N$ with $p\leq N$,
 $K,K'\in \N$ with $K'\leq K$, $r>0$.
We denote by $\widetilde{\mathcal{F}}_{p}$, resp. $\mathcal{F}_{K,K',p}[r]$,  $\Sigma\mathcal{F}_{p}[r,N]$, 
the subspace of $\widetilde{\Gamma}^{0}_{p}$, resp. $\Gamma^{0}_{p}[r]$, 
resp. $\Sigma\Gamma^{0}_{p}[r,N]$, 
made of those symbols which are independent of $\x$.
We write $\widetilde{\mathcal{F}}^{\R}_{p}$, resp. $\mathcal{F}_{K,K',p}^{\R}[r]$, 
$\Sigma\mathcal{F}_{p}^{\R}[r,N]$,  to denote functions in $\widetilde{\mathcal{F}}_{p}$, 
resp. $\mathcal{F}_{K,K',p}[r]$,  $\Sigma\mathcal{F}_{p}[r,N]$, 
which are real valued.
\end{definition}

\noindent
{\bf Paradifferential quantization.}
Given $p\in \N$ we consider   functions
  $\chi_{p}\in C^{\infty}(\R^{p}\times \R;\R)$ and $\chi\in C^{\infty}(\R\times\R;\R)$, 
  even with respect to each of their arguments, satisfying, for $0<\delta\ll 1$,
\begin{align*}
&{\rm{supp}}\, \chi_{p} \subset\{(\xi',\xi)\in\R^{p}\times\R; |\xi'|\leq\delta \langle\xi\rangle\} \, ,\qquad \chi_p (\xi',\xi)\equiv 1\,\,\, \rm{ for } \,\,\, |\xi'|\leq \delta \langle\xi\rangle / 2 \, ,
\\
&\rm{supp}\, \chi \subset\{(\xi',\xi)\in\R\times\R; |\xi'|\leq\delta \langle\xi\rangle\} \, ,\qquad \quad
 \chi(\xi',\xi) \equiv 1\,\,\, \rm{ for } \,\,\, |\xi'|\leq \delta   \langle\xi\rangle / 2 \, . 
\end{align*}
For $p=0$ we set $\chi_0\equiv1$. 
We assume moreover that 
 $ |\partial_{\xi}^{\alpha}\partial_{\xi'}^{\beta}\chi_p(\xi',\xi)|\leq C_{\alpha,\beta}\langle\xi\rangle^{-\alpha-|\beta|} $, 
 $ \forall \alpha\in \N, \,\beta\in\N^{p} $, and 
$ |\partial_{\xi}^{\alpha}\partial_{\xi'}^{\beta}\chi(\xi',\xi)|\leq C_{\alpha,\beta}\langle\xi\rangle^{-\alpha-\beta} $,  $  \forall \alpha, \,\beta\in\N $. 

If $ a (x, \xi) $ is a smooth symbol 
we define its \emph{Weyl} quantization  as the operator
acting on a
$ 2 \pi $-periodic function
$u(x)$ (written as in \eqref{complex-uU})
 as
\be\label{Weil-Q}
Op^{W}(a)u=\frac{1}{\sqrt{2\pi}}\sum_{k\in \Z}
\Big(\sum_{j\in\Z}\hat{a}\big(k-j, \frac{k+j}{2}\big)\hat{u}(j) \Big)\frac{e^{\ii k x}}{\sqrt{2\pi}}
\ee
where $\hat{a}(k,\xi)$ is the $k^{th}-$Fourier coefficient of the $2\pi-$periodic function $x\mapsto a(x,\xi)$.

\begin{definition}{\bf (Bony-Weyl quantization)}\label{quantizationtotale}
If a is a symbol in $\widetilde{\Gamma}^{m}_{p}$, 
respectively in $\Gamma^{m}_{K,K',p}[r]$,
we set
\[
\begin{aligned}
& a_{\chi_{p}}(\mathcal{U};x,\x) := \sum_{\vec{n}\in \N^{p}}\chi_{p}\left(\vec{n},\x\right)a(\Pi_{\vec{n}}\mathcal{U};x,\x) \, , \\ 
& a_{\chi}(U;t,x,\x) :=\frac{1}{2\pi}\int_{\T}  \chi\left(\xi',\x\right)\hat{a}(U;t,\xi',\x)e^{\ii \xi' x}d \xi' \, ,
\end{aligned} 
\]
where in the last equality $  \hat a $ stands for the Fourier transform with respect to the $ x $ variable, and 
we define the \emph{Bony-Weyl} quantization of $ a $ as 
$$
\opbw(a(\mathcal{U};\cdot))=Op^{W}(a_{\chi_{p}}(\mathcal{U};\cdot)),\qquad
\opbw(a(U;t,\cdot))=Op^{W}(a_{\chi}(U;t,\cdot)) \, .
$$
If  $a$ is a symbol in  $\Sigma\Gamma^{m}_{K,K',p}[r,N]$, 
we define its \emph{Bony-Weyl} quantization 
$
\opbw(a(U;t,\cdot))=\sum_{q=p}^{N-1}\opbw(a_{q}(U,\ldots,U;\cdot))+\opbw(a_{N}(U;t,\cdot)) \, . 
$
\end{definition}

Paradifferential operators act on homogeneous spaces. 
If  $a$ is in  $\Sigma\Gamma^{m}_{K,K',p}[r,N]$, the corresponding
para-differential operator is bounded from $ {\dot H}^s $ to $ {\dot H}^{s-m} $, for all
$ s \in \R $, see Proposition 3.8 in \cite{BD}. 

Definition \ref{quantizationtotale} 
is  independent of the cut-off functions $\chi_{p}$, $\chi$,  
up to smoothing operators that we define below (see Definition $ 3.7 $ in \cite{BD}).
Roughly speaking, the class $\widetilde{\mathcal{R}}^{-\rho}_{p}$ contains smoothing operators
which gain $\rho$ derivatives and are homogeneous of degree $p$ in $U$, while the class 
$\mathcal{R}_{K,K',p}^{-\rho}$ contains non-homogeneous $\rho$-smoothing operators which
vanish at degree at least $p$ in $U$, and are $(K-K')$-times differentiable in $t$.

\noindent
Given  $(n_1,\ldots,n_{p+1})\in \N^{p+1}$ we denote by $\max_{2}(n_1 ,\ldots, n_{p+1})$ 
the second largest among the integers $ n_1,\ldots, n_{p+1}$.

 \begin{definition}{\bf (Classes of  smoothing operators)} \label{omosmoothing}
 Let $N\in \N^* $, $ K, K' \in \N $ with $K'\leq K\in\N$,  $\rho\geq0$ and $r>0$.

 (i) {\bf $p$-homogeneous smoothing operators.} We denote by $\widetilde{\mathcal{R}}^{-\rho}_{p}$
 the space of $(p+1)$-linear maps $R$ 
 from $(\dot{H}^{\infty}(\T;\C^{2}))^{p}\times \dot{H}^{\infty}(\T;\C)$ to 
$\dot{H}^{\infty}(\T;\C)$,  symmetric
 in $(U_{1},\ldots,U_{p})$, of the form
$ (U_{1},\ldots,U_{p+1})\to R(U_1,\ldots, U_p)U_{p+1}$
 that satisfy the following. There are $\mu\geq0$, $C>0$ such that 
$$
 \|\Pi_{n_0}R(\Pi_{\vec{n}}\mathcal{U})\Pi_{n_{p+1}}U_{p+1}\|_{L^{2}}\leq
 C\frac{\max_2( n_1,\ldots, n_{p+1})^{\rho+\mu}}{\max( n_1,\ldots, n_{p+1})^{\rho}}
 \prod_{j=1}^{p+1}\|\Pi_{n_{j}}U_{j}\|_{L^{2}}
$$
  for any 
 $\mathcal{U}=(U_1,\ldots,U_{p})\in (\dot{H}^{\infty}(\T;\C^{2}))^{p}$,  
 $U_{p+1}\in \dot{H}^{\infty}(\T;\C)$,
  $\vec{n}=(n_1,\ldots,n_p)\in (\N^{*})^{p}$, any $n_0,n_{p+1}\in \N^*$.
 Moreover, if 
 \begin{equation}\label{omoresti2}
 \Pi_{n_0}R(\Pi_{n_1}U_1,\ldots,\Pi_{n_{p}}U_{p})\Pi_{n_{p+1}}U_{p+1}\neq 0 \, ,
 \end{equation}
 then there is a choice of signs $\s_0,\ldots,\s_{p+1}\in\{\pm 1\}$ such that 
 $\sum_{j=0}^{p+1}\s_j n_{j}=0$. 
 In addition we require the translation invariance property
\be\label{def:R-trin}
R( \tau_\teta {\cal U}) [\tau_\teta U_{p+1}]  =  \tau_\theta \big( R( {\cal U})U_{p+1} \big) \, , \quad \forall \theta \in \R \, . 
\ee
(ii) {\bf Non-homogeneous smoothing operators.}
  We denote by $\mathcal{R}^{-\rho}_{K,K',N}[r]$ 
  the space of maps $(V,U)\mapsto R(V)U$ defined on $B^K_{s_0}(I;r)\times C^K_{*}(I,\dot{H}^{s_0}(\T,\C))$ 
  which are linear in the variable $U$ and such that the following holds true. 
  For any $s\geq s_0$ there are $C>0$ and $r(s)\in]0,r[$ such that, for any $V\in B^K_{s_0}(I;r)\cap C^K_{*}(I,\dot{H}^{s}(\T,\C^2))$, 
  any $  U \in C^K_{*}(I,\dot{H}^{s}(\T,\C))$, any 
  $0\leq k\leq K-K'$ and any $t\in I$, we have 
\begin{equation}
\begin{aligned} \label{piove}
\|{\partial_t^k\left(R(V)U\right)(t,\cdot)}\|_{\dot{H}^{s- \frac32 k+\rho}} 
& \leq \sum_{k'+k''=k}C\Big( \|{U}\|_{k'',s}\|{V}\|_{k'+K',s_0}^{N} \\
& \quad \qquad \  \quad +\|{U}\|_{k'',s_0}\|V\|_{k'+K',s_0}^{N-1}\|{V}\|_{k'+K',s}\Big) \, .
\end{aligned}
\end{equation}
(iii) {\bf Smoothing operators.} We denote by $\Sigma\mathcal{R}^{-\rho}_{K,K',p}[r,N]$
the space of maps $(V,t,U)\to R(V;t)U$ 
that may be written as 
$ R(V;t) U =\sum_{q=p}^{N-1}R_{q}(V,\ldots,V) U +R_{N}(V;t) U $
for some $R_{q} $ in $ \widetilde{\mathcal{R}}^{-\rho}_{q}$, $q=p,\ldots, N-1$ and $R_{N}$  in 
$\mathcal{R}^{-\rho}_{K,K',N}[r]$.

 We denote by  $\Sigma\mathcal{R}^{-\rho}_{K,K',p}[r,N]\otimes\mathcal{M}_2(\C)$
the space of $2\times2$ matrices with entries
  in the class $\Sigma\mathcal{R}^{-\rho}_{K,K',p}[r,\! N]$.
 \end{definition}

Below we introduce classes of operators without keeping track of the number of lost derivatives in a precise way 
(see Definition 3.9  in \cite{BD}).
The class $\widetilde{\mathcal{M}}^{m}_{p}$ denotes multilinear maps that lose $m$ derivatives
and are $p$-homogeneous in $U$, 
while the class $\mathcal{M}_{K,K',p}^{m}$ contains non-homogeneous maps which lose $m$ derivatives,
vanish at degree at least $p$ in $U$, and are $(K-K')$-times differentiable in $t$.

\begin{definition}{\bf (Classes of maps)} \label{smoothoperatormaps}
Let $p,N\in \N $, with $p\leq N$, $N\geq1$, $K,K'\in\N$ with $K'\leq K$
and $ m \geq 0 $. 

(i) {\bf $p$-homogeneous maps.} 
We denote by $\widetilde{\mathcal{M}}^{m}_{p}$
 the space of $(p+1)$-linear maps $M$ 
 from $(\dot{H}^{\infty}(\T;\C^{2}))^{p}\times \dot{H}^{\infty}(\T;\C)$ to 
 $\dot{H}^{\infty}(\T;\C)$ which are symmetric
 in $(U_{1},\ldots,U_{p})$, of the form
$ (U_{1},\ldots,U_{p+1})\to M(U_1,\ldots, U_p)U_{p+1} $
and that satisfy the following. There is $C>0$ such that 
 \[
 \|\Pi_{n_0}M(\Pi_{\vec{n}}\mathcal{U})\Pi_{n_{p+1}}U_{p+1}\|_{L^{2}}\leq
 C( n_0 +  n_1 +\cdots+ n_{p+1})^{m} \prod_{j=1}^{p+1}\|\Pi_{n_{j}}U_{j}\|_{L^{2}}
\] 
  for any 
 $\mathcal{U}=(U_1,\ldots,U_{p})\in (\dot{H}^{\infty}(\T;\C^{2}))^{p}$, any 
 $U_{p+1}\in \dot{H}^{\infty}(\T;\C)$,
 $\vec{n}=(n_1,\ldots,n_p) $ in $  (\N^*)^{p}$, any $ n_0,n_{p+1}\in \N^*$.
 Moreover the properties \eqref{omoresti2}-\eqref{def:R-trin} hold.

(ii) {\bf Non-homogeneous maps.}
  We denote by  $\mathcal{M}^{m}_{K,K',N}[r]$ 
  the space of maps $(V,u)\mapsto M(V) U $ defined on $B^K_{s_0}(I;r)\times C^K_{*}(I,\dot{H}^{s_0}(\T,\C))$ 
  which are linear in the variable $ U $ and such that the following holds true. 
  For any $s\geq s_0$ there are $C>0$ and 
  $r(s)\in]0,r[$ such that for any 
  $V\in B^K_{s_0}(I;r)\cap C^K_{*}(I,\dot{H}^{s}(\T,\C^2))$, 
  any $ U \in C^K_{*}(I,\dot{H}^{s}(\T,\C))$, any $0\leq k\leq K-K'$, $t\in I$, we have that
  $\|{\partial_t^k\left(M(V)U\right)(t,\cdot)}\|_{\dot{H}^{s-\frac{3}{2}k-m}}$ is bounded by the right hand side  of \eqref{piove}.

(iii) {\bf Maps.}
We denote by $\Sigma\mathcal{M}^{m}_{K,K',p}[r,N]$
the space of maps $(V,t,U)\to M(V;t)U$
that may be written as 
$ M(V;t)U=\sum_{q=p}^{N-1}M_{q}(V,\ldots,V)U+M_{N}(V;t)U $
for some $M_{q} $ in $ \widetilde{\mathcal{M}}^{m}_{q}$, $q=p,\ldots, N-1$ and $M_{N}$  in  
$\mathcal{M}^{m}_{K,K',N}[r]$.
Finally we set $\widetilde{\mathcal{M}}_{p}:=\cup_{m\geq0}\widetilde{\mathcal{M}}_{p}^{m}$,
$\mathcal{M}_{K,K',p}[r]:=\cup_{m\geq0}\mathcal{M}^{m}_{K,K',p}[r]$, 
$\Sigma\mathcal{M}_{K,K',p}[r,N]:=\cup_{m\geq0}\Sigma\mathcal{M}^{m}_{K,K',p}[r]$.

We denote by  $\Sigma\mathcal{M}_{K,K',p}^{m}[r,N]\otimes\mathcal{M}_2(\C)$
 the space of $2\times 2$ matrices whose entries are maps in
 $\Sigma\mathcal{M}^{m}_{K,K',p}[r,N]$.
We  set $\Sigma\mathcal{M}_{K,K',p}[r,N]\otimes\mathcal{M}_2(\C) :=\cup_{m\in \R}
\Sigma\mathcal{M}_{K,K',p}^{m}[r,N]\otimes\mathcal{M}_2(\C)$.
\end{definition}

Given an  operator $\mathtt{R}_{1}$ in 
$\widetilde{\mathcal{R}}^{-\rho}_{1}$ (or in $\widetilde{\mathcal{M}}^{m}_{1}$), 
and $ z^{\s_2} $, $ \s_2 = \pm $, 
 the \emph{momentum} condition \eqref{def:R-trin} implies 
 that
\begin{equation}\label{Homoop}
\mathtt{R}_{1}(U)[z^{\s_2}]= 
\sum_{j_1,j_{2}\in \mathbb{Z}\setminus\{0\}, \s_1 = \pm}
\!\!\!\!\!\!\!\! (\mathtt{R}_{1})^{\s_1, \s_{2}}_{j_1, j_{2}}
u_{j_{1}}^{\s_1} z_{j_{2}}^{\s_{2}}
e^{\ii(\s_1 j_1+\s_{2}j_{2})x}
\end{equation}
for some 
$ (\mathtt{R}_{1})^{\s_1,\s_{2}}_{j_1, j_{2}}
\in \mathbb{C} $, see \cite{BFP}.

\begin{proposition}{\bf (Compositions)} \label{composizioniTOTALI}
Let $m,m'\in \R$, $N, K,K' \in \N $ with $K'\leq K$, 
$ p_1,p_2,p_{3} \in \N$, 
$\rho\geq0$ and $r>0$.
Let $a\in \Sigma\Gamma^{m}_{K,K',p_1}[r,N]$, 
$R\in\Sigma\mathcal{R}^{-\rho}_{K,K',p_{2}}[r,N]$ and $M\in \Sigma\mathcal{M}^{m'}_{K,K',p_{3}}[r,N]$.
Then:

\noindent
$(i)$
$ R(U;t)\circ \opbw(a(U;t,x,\x)) $, $  \opbw(a(U;t,x,\x))\circ R(U;t) $
are   in $\Sigma\mathcal{R}^{-\rho+m}_{K,K',p_1+p_{2}}[r,N]$;

\noindent
$(ii)$
$ R(U;t)\circ M(U;t) $ and $ M(U;t)\circ R(U;t) $ 
are smoothing operators in $\Sigma\mathcal{R}^{-\rho+m'}_{K,K',p_2+p_{3}}[r,N]$;

\noindent
$(iii)$
If  $R_{1} \in \widetilde{\mathcal{R}}_{p_1}^{-\rho}$, $ p_1 \geq	 1 $,   
then $ R_{1} ( \underbrace{U, \ldots, U}_{p_1-1},  M(U;t)U)$ belongs to $\Sigma\mathcal{R}^{-\rho+m'}_{K,K',p_1 +p_3}[r,N]$.
\end{proposition}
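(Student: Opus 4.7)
The plan is to reduce everything to a homogeneous/non-homogeneous dichotomy. Writing
$a=\sum_{q=p_1}^{N-1}a_q+a_N$, $R=\sum_{q=p_2}^{N-1}R_q+R_N$, $M=\sum_{q=p_3}^{N-1}M_q+M_N$,
the composition $R\circ\opbw(a)$ expands into a finite sum of products of homogeneous pieces, plus ``tail'' terms in which at least one factor is non-homogeneous of degree $N$. Homogeneous$\,\times\,$homogeneous products of total degree $<N$ should land in $\widetilde{\mathcal R}^{-\rho+m}_{q_1+q_2}$ with the right kernel structure, while all products whose total degree reaches $N$ should be absorbed into $\mathcal R^{-\rho+m}_{K,K',p_1+p_2}[r]$. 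Crucially, the momentum/translation-invariance property passes through compositions: \eqref{def:tr-in} gives $\opbw(a(\tau_\theta \mathcal U;\cdot))=\tau_\theta\circ\opbw(a(\mathcal U;\cdot))\circ\tau_{-\theta}$ (the cut-off $\chi_{p}$ depends only on frequencies and is unaffected), and composing with \eqref{def:R-trin} yields \eqref{def:R-trin} for the product. This verifies the qualitative part of the smoothing class; the remaining task is purely quantitative.

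For the \emph{homogeneous} estimate in (i), fix $R_{q_2}\in\widetilde{\mathcal R}^{-\rho}_{q_2}$, $a_{q_1}\in\widetilde\Gamma^{m}_{q_1}$, and frequencies $n_0,n_1,\ldots,n_{q_1+q_2+1}\in\mathbb N^*$. Inserting a Littlewood-Paley partition $\mathrm{Id}=\sum_{k}\Pi_k$ between the two factors, one writes
\[
\Pi_{n_0}\bigl(R_{q_2}(\Pi_{\vec n'}\mathcal U')\circ\opbw(a_{q_1}(\Pi_{\vec n''}\mathcal U''))\bigr)\Pi_{n_{q_1+q_2+1}}U_{q_1+q_2+1}
=\sum_{k}\Pi_{n_0}R_{q_2}(\cdot)\Pi_k\cdot\Pi_k\opbw(a_{q_1}(\cdot))\Pi_{n_{q_1+q_2+1}}U_{q_1+q_2+1}.
\]
The paradifferential cut-off $\chi_{q_1}$ localises $k\sim n_{q_1+q_2+1}$ up to a multiplicative correction by $\langle\vec n''\rangle$. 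The $L^2\to L^2$ bound for $\opbw(a_{q_1})$ coming from \eqref{pomosimbo1} gives a factor $\langle n_{q_1+q_2+1}\rangle^{m}\langle\vec n''\rangle^{\mu}\prod\|\Pi_{n''_j}U''_j\|_{L^2}$, while the kernel bound on $R_{q_2}$ supplies $\max_2(n_0,\vec n',k)^{\rho+\mu'}/\max(n_0,\vec n',k)^{\rho}$. Using the momentum condition $\sum\sigma_j n_j=0$ (which bounds $n_0$ by the sum of the other indices) and the fact that $k\sim n_{q_1+q_2+1}$ forces $\max_2$ and $\max$ taken over the enlarged set $\{n_0,\vec n',\vec n'',n_{q_1+q_2+1}\}$ to change only by powers of $\langle\vec n''\rangle$, one sums over $k$ and recovers an estimate of type $\max_2^{\rho+\mu''}/\max^{\rho-m}$, that is, membership in $\widetilde{\mathcal R}^{-\rho+m}_{q_1+q_2}$.

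For the \emph{non-homogeneous} parts of (i), I would use the continuity statement of Proposition 3.8 in \cite{BD}: $\opbw(a(U;t,\cdot))$ maps $\dot H^s\to\dot H^{s-m}$ with bounds tame in $\|U\|_{K,s}$; composing with \eqref{piove} for $R$ produces a tame bound matching the definition of $\mathcal R^{-\rho+m}_{K,K',N}[r]$. Item (ii) is the same argument with the trivial operator bound on maps $M\in\Sigma\mathcal M^{m'}_{K,K',p_3}[r,N]$ replacing the paradifferential action on Sobolev spaces. Item (iii) is slightly different in structure: since $R_1$ is an intrinsic $(p_1+1)$-linear form, substituting $M(U;t)U$ in its last argument is legitimate because $M(U;t)U\in C^K_*(I;\dot H^{s-m'})$ with a tame estimate; one then invokes the kernel bound of $R_1$, which gains $\rho$ derivatives on this argument, and trades an $m'$-derivative loss for the $\rho$ gain, yielding a class-$\Sigma\mathcal R^{-\rho+m'}_{K,K',p_1+p_3}[r,N]$ operator. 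The main obstacle, common to (i)--(iii), is the bookkeeping of the $\max_2/\max$ ratios after insertion of intermediate Littlewood-Paley projectors: one must confirm that the second-largest frequency of the combined family is not spoiled when the ``glue'' frequency $k$ is summed out, which is where the momentum condition and the smallness of the cut-off parameter $\delta$ are used decisively.
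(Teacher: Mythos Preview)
Your sketch is sound in structure, but be aware that the paper does not actually prove this proposition: its entire proof is a reference to Propositions~3.16 and~3.17 of Berti--Delort \cite{BD}, together with the observation that the translation-invariance properties \eqref{def:tr-in}, \eqref{def:R-trin} pass to compositions as in \cite{BFP}. What you outline --- the homogeneous/non-homogeneous splitting, insertion of Littlewood--Paley projectors, the paradifferential localisation forcing $k\sim n_{q_1+q_2+1}$, and the tame Sobolev bounds for the non-homogeneous tails --- is precisely the content of those cited propositions, so there is no genuine methodological difference: you are reconstructing from scratch what the paper imports wholesale.

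One small imprecision in your treatment of (iii): $M(U;t)U$ is substituted as the $p_1$-th \emph{coefficient} argument of $R_1$, not as the final operand $U_{p_1+1}$ on which $R_1(\cdots)$ acts. The $\rho$-smoothing gain is on $U_{p_1+1}$; the $m'$-derivative loss from $M$ enters instead through the $\max_2^{\rho+\mu}$ factor in the kernel bound and is absorbed by enlarging $\mu$. The conclusion is unaffected.
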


\begin{proof}
See Propositions 3.16,  3.17  
in \cite{BD}.
The translation invariance properties for the composed operators and symbols 
in items (i)-(ii) follow as in \cite{BFP}.
\end{proof}

\noindent
{\bf Real-to-real operators.} 
Given a linear  operator  
$ R(U) [ \cdot ]$ acting on $ \C $ (it may be a 
smoothing operator in  $\Sigma\mathcal{R}^{-\rho}_{K,K',1}$ or
a map  in $\Sigma\mathcal{M}_{K,K',1}$) 
we associate the linear  operator  defined by  
$$
\ov{R}(U)[v] := \ov{R(U)[\ov{v}]} \, ,   \quad \forall\, v \in \C \, .
$$
We say that a matrix of operators acting on $ \C^2 $ is   \emph{real-to-real}, if it has the form 
\begin{equation}\label{vinello}
R(U) =
\left(\begin{matrix} R_{1}(U) & R_{2}(U) \\
\ov{R_{2}}(U) & \ov{R_{1}}(U)
\end{matrix}
\right) \, .
\end{equation}
 If  $R(U)$  is a  real-to-real matrix of operators 
then, given $V=\vect{v}{\ov{v}}$, the vector  $Z:=R(U)[V]$ has the form  $Z=\vect{z}{\bar{z}}$, i.e. the second component is the complex conjugated of the first one.

Given two linear operators $ A, B $ (either two operator-valued matrices acting on $ \C^2 $ as in \eqref{vinello}), we denote their commutator by  $ [A, B] = A B - B A $.

\noindent
$\bullet$  The notation $A\lesssim_{s} B$ means that 
 $A \leq C(s) B$
for some positive constant  $C(s) > 0 $.

\section{Paradifferential reduction to constant symbols 
 up to smoothing operators}
 
The first step in order to prove Theorem \ref{BNFtheorem}
is to write \eqref{WW1} in  paradifferential form, to symmetrize it, 
and reduce to paradifferential symbols which are constant in $ x $, see Proposition 
\ref{regolo}. 
These results are proved  in \cite{BD} (up to minor details). 
We denote the horizontal and vertical components of the velocity field at the free interface by
\begin{align*} 
& V =  V (\eta, \psi) :=  (\pa_x \Phi) (x, \eta(x)) = \psi_x - \eta_x B \, , 
\\
& B =  B(\eta, \psi) := (\pa_y \Phi) (x, \eta(x)) =  \frac{G(\eta) \psi + \eta_x \psi_x}{ 1 + \eta_x^2} \, , 
\end{align*}
and the ``good unknown'' of Alinhac 
\begin{equation}\label{omega0}
\omega := \psi-\opbw(B(\eta,\psi))\eta \, ,
\end{equation} 
as introduced in Alazard-Metivier \cite{AlM}. 
The function $B(\eta,\psi)$ belongs to
$\Sigma\mathcal{F}^{\mathbb{R}}_{K,0,1}[r,N]$, for any $N>0$ (see 
Proposition 7.4 in \cite{BD}).
Then, by the action of a paraproduct, 
if  $ \eta \in H^{s+ \frac14}_0 $ and $ \psi \in {\dot H}^{s-\frac14} $ then
the good unknown $ \omega $ is in $ {\dot H}^{s-\frac14} $. 

Define  the Fourier multiplier $\Lambda$ of order $-1/4$  as
\begin{equation}\label{compl1}
\Lambda:=\Lambda(D):=
\big(D\tanh(hD)\big)^{\frac{1}{4}}
\big(g+\kappa D^{2}\big)^{-\frac{1}{4}}    
\end{equation}
and consider the complex function
\begin{equation}\label{compVar}
u:=\frac{1}{\sqrt{2}}\Lambda\omega
+\frac{\ii}{\sqrt{2}} \Lambda^{-1}\eta\,, \quad
\eta=\frac{1}{\ii \sqrt{2}}\Lambda(u-\bar{u})\,,
\quad \omega=\frac{1}{\sqrt{2}}\Lambda^{-1}(u+\bar{u})
\end{equation}
where $ \Lambda^{-1} $ acts on functions modulo constants in itself.

Let $K \in \N $.
We first remark that, if 
$(\eta,\psi)$ solves the gravity-capillary system 
\eqref{WW1}, then the function 
 $u$ defined in \eqref{compVar} 
 satisfies, by Proposition $7.9$ in \cite{BD}, 
 for $ s \gg K $, as long as $ u $ stays in the unit ball of 
 $ {\dot H}^s (\T, \C) $, 
\be\label{lem:tempo}
\| \pa_t^k u \|_{\dot{H}^{s-\frac{3}{2}k}} \lesssim_{s,K} \| u \|_{\dot{H}^s} \, ,
\quad \forall \; 0\leq k\leq K\,. 
\ee
 As a consequence, if \eqref{hypoeta} holds then 
\begin{equation}\label{pallaU}
\sup_{t\in [-T,T]}\|\partial_{t}^{k}u\|_{\dot{H}^{s-\frac{3}{2}k}}\leq C_{s, K} 
 \bar \e\,,\quad
\forall\, 0\leq k\leq K\,.
\end{equation}

\begin{proposition}{\bf (Paradifferential complex form of the water waves equations)}
\label{Formulazione}
Let $ N, K  \in \N^* $, $ \rho > 0 $.
Assume that $(\eta,\psi)$ solves the gravity-capillary system 
\eqref{WW1}
and satisfy \eqref{hypoeta} for some $T>0$ and $s\gg K$.
Then
the function $U:=\vect{u}{\bar{u}}$, with $u$ defined  in \eqref{compVar},
solves
\begin{equation}\label{complEQ}
D_{t}U= \mk(D)E U+\opbw(A(U;t,x,\x))U+R(U;t)U\,, 
\ \  E :=\sm{1}{0}{0}{-1}\,,
\end{equation}
where $ D_{t} :=\frac{1}{\ii}\pa_t $ and 

\noindent
$\bullet$  $\mk(D)=\opbw(\mk(\x))$ where $\mk(\x)\in \widetilde{\Gamma}_0^{\frac{3}{2}}$ is the dispersion relation symbol defined in \eqref{dispersionLaw}.
\noindent
$\bullet$ 
the matrix of symbols $A(U;t,x,\x)
\in \Sigma\Gamma^{1}_{K,1,1}[r,N]\otimes\mathcal{M}_2(\mathbb{C})$
has the form
\begin{equation}\label{formA}
\begin{aligned}
A(U;t,x,\x)&=\big(\zeta(U;t,x)\mk(\x)
+\lambda_{\frac{1}{2}}(U;t,x,\x)\big)\sm{1}{0}{0}{-1}\\
&+\big(\zeta(U;t,x)\mk(\x)
+\lambda_{-\frac{1}{2}} (U;t,x,\x) \big)\sm{0}{-1}{1}{0}\\
&+\lambda_1(U;t,x,\x)\sm{1}{0}{0}{1}
+\lambda_0(U;t,x,\x)\sm{0}{1}{1}{0}
\end{aligned}
\end{equation}
where 

\noindent
 $\bullet$
the function $ \zeta(U;t,x) $ is in $ \Sigma {\mathcal F}_{K,0,1}^\R [r,N] $; 

\noindent
 $\bullet$
the symbols $\lambda_{j} (U;t,x,\x) $ are in  
$ \Sigma\Gamma^{j}_{K,1,1}[r,N]$, $j=1,0,1/2,-1/2$, 
and $\Im \lambda_j (U;t,x,\x) $ are in $ \Sigma\Gamma^{j-1}_{K,1,1}[r,N]$
for $j=1,1/2$;

\noindent
 $\bullet$  the matrix of smoothing operators $R(U;t)$ is
in
 $\Sigma\mathcal{R}^{-\rho}_{K,1,1}[r,N]\otimes\mathcal{M}_2(\mathbb{C})$;

\noindent
$\bullet$ the operators  $ \ii \opbw(A(U;t,x,\x))$
and $ \ii R(U;t)$
are real-to-real, according to \eqref{vinello}.
\end{proposition}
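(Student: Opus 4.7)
The proof would follow the standard paradifferential reduction of gravity-capillary water waves of Alazard-Burq-Zuily type, refined in the complex formulation of \cite{BD}. The plan is to first paralinearize the Dirichlet--Neumann operator $G(\eta)\psi$, then rewrite \eqref{WW1} in the Alinhac good unknowns $(\eta,\omega)$ defined in \eqref{omega0}, and finally diagonalize the resulting real $2\times 2$ paradifferential system via the complex variable \eqref{compVar} built from $\Lambda$.

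Concretely, I would proceed in the following steps. First, applying the paralinearization theorem for $G(\eta)$ (Proposition 7.4 and related statements in \cite{BD}), one obtains
\[
G(\eta)\psi = \opbw(\lambda(\eta;x,\xi))\omega - \opbw(V(\eta,\psi))\eta_x + R_1(\eta,\psi)[\eta,\psi],
\]
where $\lambda$ is a symbol whose leading part is the Dirichlet-Neumann symbol, $V$ is the horizontal velocity, and $R_1$ is a smoothing remainder in the appropriate class $\Sigma\mathcal R^{-\rho}_{K,1,1}[r,N]$. Substituting this into \eqref{WW1} and performing the change of unknown $\omega = \psi - \opbw(B)\eta$, one eliminates the singular paraproduct of $B$ against $\eta$ and obtains a $2\times 2$ paradifferential system in $(\eta,\omega)$ of the form
\[
\partial_t \vect{\eta}{\omega} = \opbw(M(U;t,x,\xi))\vect{\eta}{\omega} + \text{smoothing},
\]
where the principal symbol $M$ is of the expected type $\lambda(\eta;x,\xi)$ on the antidiagonal and $-V(U;t,x)\ii\xi$ on the diagonal, and where the symbols depend smoothly on $U$ and lie in the homogeneous/non-homogeneous classes of Definition \ref{pomosimb}. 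Next, I would apply the linear change of variable \eqref{compVar}: writing $\mathcal C:=\tfrac{1}{\sqrt2}\sm{\Lambda^{-1}}{\Lambda^{-1}}{-\ii\Lambda}{\ii\Lambda}$ (acting on $(\eta,\omega)^{\top}$ to give $(u,\bar u)^{\top}$), one conjugates the real $2\times 2$ system by $\mathcal C$. The Fourier multiplier $\Lambda$ of order $-1/4$ is exactly what is needed so that $\mathcal C$ diagonalizes the linear part at the level of principal symbols: the principal $\sm10{0}{-1}\,\Omega(\xi)$ block emerges, while the lower-order and non-diagonal pieces are organized into the four matrix blocks appearing in \eqref{formA}. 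Composition of paradifferential operators (symbolic calculus as in Proposition \ref{composizioniTOTALI} and Propositions 3.12--3.17 of \cite{BD}) produces new symbols in the same classes together with smoothing remainders of order $-\rho$ after discarding enough subprincipal terms.

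To finish, I would check the structural properties: (i) the transport coefficient $\zeta(U;t,x)$ is the real-valued function arising from the trace $V(\eta,\psi)$ of the horizontal velocity and indeed lies in $\Sigma\mathcal F^{\R}_{K,0,1}[r,N]$; (ii) the diagonal symbols $\lambda_1$ and $\lambda_{1/2}$ acquire imaginary parts only through the conjugation of the Dirichlet-Neumann symbol by $\mathcal C$ and through self-commutators, both of which gain one order in $\xi$, whence $\Im\lambda_{1},\Im\lambda_{1/2}$ fall into $\Sigma\Gamma^{j-1}_{K,1,1}[r,N]$; (iii) the real-to-real property \eqref{vinello} is inherited from the fact that $(\eta,\omega)$ are real-valued, that $\mathcal C$ intertwines complex conjugation with the swap $\sm0110$, and that all paradifferential and smoothing operations preserve this symmetry.

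The main obstacle I expect is the bookkeeping of the symbol classes after the conjugation by $\Lambda$: one must verify that every commutator generated by $[\Lambda,\opbw(\cdot)]$ and every Bony-Weyl composition remainder decays by the right number of derivatives to land in $\Sigma\Gamma^{j-1}_{K,1,1}[r,N]$ rather than in $\Sigma\Gamma^{j}_{K,1,1}[r,N]$, because this single fact is what will later enable the energy estimates in Theorem \ref{BNFtheorem}. Similarly, showing that all error terms generated by the good-unknown substitution and by the paralinearization of $G(\eta)$ can be packaged into a single matrix of $\rho$-smoothing real-to-real operators $R(U;t)\in\Sigma\mathcal R^{-\rho}_{K,1,1}[r,N]\otimes\mathcal M_2(\C)$ requires carefully iterating the composition and remainder estimates of Proposition \ref{composizioniTOTALI}. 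Since all the needed symbolic calculus is in place in \cite{BD}, the proof is essentially an adaptation of Proposition 7.9 there to the present variant of the complex formulation.
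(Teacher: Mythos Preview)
Your outline is correct and is precisely the content of the result the paper cites: the paper's own proof is just a reference to Corollary~7.7 and Proposition~7.8 of \cite{BD} (not Proposition~7.9, which is the a~priori time-derivative estimate used for \eqref{lem:tempo}), so you are effectively sketching what \cite{BD} does rather than giving an alternative argument.

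There are, however, two adaptations that the paper singles out and that your proposal does not address. First, the results in \cite{BD} are stated for functions even in $x$, and here that parity restriction is dropped; you should note that nothing in the paralinearization or in the conjugation by $\Lambda$ uses evenness. Second, and more importantly for the rest of the paper, the symbol and smoothing-operator classes used here carry the additional $x$-translation invariance requirements \eqref{def:tr-in} and \eqref{def:R-trin}, which are absent from the corresponding classes in \cite{BD}. You need to verify that every homogeneous component $A_p$ and $R_p$ produced by the paralinearization inherits this momentum condition; the paper does this by invoking the argument in \cite{BFP}, and it is essential later (Remark~\ref{indipx} and Lemma~\ref{lem:41}) for concluding that the quadratic constant-coefficient symbols vanish. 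Without this check your symbols would lie in the \cite{BD} classes but not in the classes of Definition~\ref{pomosimb} as stated.
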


\begin{proof}
It is Corollary $7.7$ and Proposition $7.8$ in \cite{BD}.
The only difference is that 
$ U(x) $ is not 
even in $ x $. 
The property that the homogeneous components 
$A_{p}(U;t,x,\x)$, $R_{p}(U;t)$,  $p=1,\ldots,\!N$,  of the matrices 
$A(U;t,x,\x)$, $R(U;t)$ satisfy  \eqref{def:tr-in} and  \eqref{def:R-trin}
is  checked as in \cite{BFP}. 
\end{proof}

System \eqref{complEQ} has  the form 
\begin{equation}\label{WWFou}
D_{t}U=\mk(D)E U+M(U;t)U 
\end{equation}
where $ M(U;t) $ is a real-to-real map 
in $ \Sigma\mathcal{M}^{m_1}_{K,1,1}[r,N]\otimes\mathcal{M}_2(\C) $  for some 
$ m_1\geq 3/ 2 $
(using that  paradifferential operators and smoothing remainders 
are maps, see  (4.2.6) in \cite{BD}). 

As in \cite{BD},  since 
the dispersion law \eqref{dispersionLaw} is  \emph{super-linear},  
system \eqref{complEQ} 
can be transformed into a paradifferential diagonal system with a 
symbol constant in $ x $, up to smoothing terms. 

\begin{proposition}{\bf (Reduction to constant coefficients up to smoothing operators)}\label{regolo}
Fix $ \rho > 0  $ arbitrary.  
There exist $s_0>0$, $K':=K'(\rho)$ such that, 
for any $s\geq s_0$, for all $0<r \leq r_0(s)$ 
small enough,  for all $K\geq K'$
and  any solution $U\in B^K_s (I;r)$ of  \eqref{complEQ},  
there is a family of real-to-real, bounded, invertible linear maps $\mathfrak{F}^{\theta} (U)$, $\theta\in [0,1]$, 
such that the function 
$$ 
Z :=\vect{z}{\bar{z}}=(\mathfrak{F}^{\theta}(U))_{|\theta=1}[U] 
$$
solves the system 
\begin{equation}\label{finalsyst}
 \begin{aligned}
 D_{t} Z &= 
 \opbw\big((1+\underline{\zeta}(U;t))\mk(\x) E + H(U;t,\x)\big) Z
 +{R}(U;t)[Z]
 \end{aligned}
 \end{equation}
 where

\noindent
$\bullet$
 the function 
  $\underline{\zeta}(U;t)\in\Sigma\mathcal{F}^{\mathbb{R}}_{K,K',1}[r,N]$ 
  and the diagonal matrix of symbols
$H(U;t,\x)\in \Sigma\Gamma^{1}_{K,K',1}[r,N] \otimes {\mathcal M}_2 (\C) $ 
are {\rm independent} of $x$; 

\noindent
$\bullet$
the symbol 
 $\Im H(U;t,\x)$ belongs to $\Sigma\Gamma^{0}_{K,K',1}[r,N] \otimes {\mathcal M}_2 (\C) $;
  
\noindent
$\bullet$
${R}(U;t)$ is  matrix of smoothing operators in
  $ \Sigma\mathcal{R}^{-\rho}_{K,K',1}[r,N] \otimes {\mathcal M}_2 (\C) $ 

\noindent
$\bullet$
the operators  $ \ii \opbw(H(U;t,\x)) $
and $\ii R(U;t) $ are real-to-real, according to  \eqref{vinello};

\noindent
$ \bullet $ the map $\mathfrak{F}^{\theta}(U) $ 
satisfies, for all $  \,0\leq k\leq K-K' $, for any 
$V\in C^{K-K'}_{*\R}(I;\dot{H}^{s}(\T;\C^2))$, 
\begin{equation}\label{stimFINFRAK}
\| \pa_{t}^{k}\mathfrak{F}^{\theta}(U)[V]\|_{\dot{H}^{s-\frac{3}{2}k}}
+\| \pa_{t}^{k}(\mathfrak{F}^{\theta}(U))^{-1}[V]\|_{\dot{H}^{s-\frac{3}{2}k}}
\leq \|V\|_{k,s} \big( 1+ C_{s,r, K} \|U\|_{K,s_0} \big) 
\end{equation}
 uniformly in $\theta\in [0,1]$. Moreover the map $\mathfrak{F}^{\theta}(U) = U + \theta M_1 (U)[U] +
M_{\geq 2} (\theta; U)[U] $ where $ M_1 (U) $ is in  
$ \widetilde {\cal M}_{1} \otimes {\cal M}_2 ( \C ) $ 
and  $ M_{\geq 2} (\theta; U) \in {\cal M}_{K,K',2}[r] \otimes {\cal M}_2 ( \C ) $
with estimates uniform in $ \theta \in [0,1] $.  
\end{proposition}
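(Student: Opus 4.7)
The plan is to obtain \eqref{finalsyst} by conjugating \eqref{complEQ} through a finite composition of para-differential flow maps, each targeting a specific order in the symbolic calculus, starting from the top order $3/2$ and descending by half-integer steps down to order $-\rho$. The scheme follows the one developed in \cite{BD} (Chapter~9), the only new point being that we cannot use the evenness in $x$ to kill by parity the ``bad'' contributions, so one must keep track of the $x$-translation invariance \eqref{def:tr-in}--\eqref{def:R-trin} at every step.

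First, I would normalize the principal coefficient $(1+\zeta(U;t,x))\mk(\xi)$ on the diagonal: conjugating \eqref{complEQ} by the time-$1$ flow of the para-differential transport operator $\opbw\!\big(\beta(U;t,x)\xi\big)$, where $\beta$ is chosen so that $\beta_x$ matches the non-constant part of $\zeta$ in a suitable way, replaces $\zeta(U;t,x)$ by a function $\underline\zeta(U;t)$ constant in $x$ (the $x$-average), modulo para-differential terms of strictly lower order and smoothing remainders. Next I would diagonalize the system, i.e.\ remove the off-diagonal principal block $(\zeta\mk(\xi)+\lambda_{-1/2})\sm{0}{-1}{1}{0}+\lambda_0\sm{0}{1}{1}{0}$. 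Because the dispersion relation $\mk(\xi)\sim|\xi|^{3/2}$ is elliptic, the cohomological equation $2\mk(\xi)q(U;t,x,\xi)=(\text{off-diagonal symbol})$ is solved by a symbol $q$ that loses $3/2$ orders; we then conjugate by the flow of $\opbw\!\big(\sm{0}{q}{\bar q}{0}\big)$.

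The core of the reduction is the iteration that kills the $x$-dependence of the remaining diagonal symbols order by order. Suppose we have already reduced up to a diagonal symbol $a(U;t,x,\xi)\,E$ of order $m\le 1$. I decompose $a=\langle a\rangle(U;t,\xi)+\widetilde a(U;t,x,\xi)$, where $\langle a\rangle$ denotes the $x$-average and $\widetilde a$ has zero mean in $x$. Since $\mk'(\xi)\sim|\xi|^{1/2}$ does not vanish for $\xi\ne 0$, the equation $\mk'(\xi)\,\partial_x b = \widetilde a$ is uniquely solvable on $\mathbb T$ (for each $\xi$) by a symbol $b$ of order $m-1/2$. Conjugating by the time-$1$ flow of $\opbw(b\,E)$ and using the principal commutator $[\opbw(\mk(\xi)E),\opbw(bE)]\approx\ii\,\opbw(\mk'\partial_x b\, E)$ cancels $\opbw(\widetilde a\,E)$, modulo symbols of order $m-1$ and smoothing contributions. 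Repeating this finitely many times (order $1/2$ gain per step) brings every $x$-dependent symbolic piece below order $-\rho$, where it is reabsorbed into the smoothing remainder $R(U;t)$ via Proposition~\ref{composizioniTOTALI}. The imaginary-part constraints (``$\Im\lambda_j$ of order $j-1$'') are preserved because the symbols $b$ constructed at each step are real, so the flow preserves reality up to lower order.

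Finally I would compose all the transformations, define $\mathfrak F^{\theta}(U)$ as the flow at time $\theta$ of the combined generator, and verify: the real-to-real structure \eqref{vinello} (each generator is real-to-real by construction, and the Lie algebra of real-to-real operators is closed); the translation invariance properties \eqref{def:tr-in} and \eqref{def:R-trin} (preserved because all generators are built as solutions of equations whose coefficients satisfy them, cf.\ \cite{BFP}); and the bounds \eqref{stimFINFRAK} (which follow from standard energy estimates for the flow equation $\partial_\theta \mathfrak F^{\theta}=\opbw(\cdot)\mathfrak F^{\theta}$ together with the para-product estimates of Proposition~\ref{composizioniTOTALI}). The expansion $\mathfrak F^{\theta}(U)=U+\theta M_1(U)U+M_{\ge 2}(\theta;U)U$ is obtained by Taylor-expanding the flow in $\theta$ around $0$ and isolating the $1$-homogeneous contribution. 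The main technical obstacle is the careful bookkeeping of the loss of time-regularity: each flow costs some units of $K'$, and one must check that a finite $K'=K'(\rho)$ suffices; this is resolved as in \cite{BD} by choosing the number of iterations depending only on $\rho$.
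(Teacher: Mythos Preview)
Your proposal is correct and follows essentially the same route as the paper, which simply cites Propositions~4.9, 5.1 and 5.5 of \cite{BD} (not Chapter~9) together with Lemma~3.22 there for the bounds \eqref{stimFINFRAK} and Lemma~A.2 of \cite{BFP} for the expansion $\mathfrak F^{\theta}(U)=U+\theta M_1(U)[U]+M_{\ge 2}(\theta;U)[U]$; the only additions over \cite{BD} are the verification of the translation-invariance properties \eqref{def:tr-in}--\eqref{def:R-trin} (done as in \cite{BFP}) and the rewriting of the remainder as a function of $Z$ rather than $U$. Your sketch of the three stages (paracomposition to make the top coefficient constant in $x$, block-diagonalization via the elliptic gap $2\mk(\xi)$, then the descending iteration solving $\mk'(\xi)\partial_x b=\widetilde a$) is precisely the content of those propositions, so there is nothing to add beyond correcting the chapter reference.
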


\begin{proof}
This statement collects the results of Propositions $ 4.9$, $ 5.1$ and $5.5$ in \cite{BD}.
The remainder in (5.2.9) in \cite{BD} has the form \eqref{finalsyst} 
expressing $ U = (\mathfrak{F}^{\theta}(U))_{|\theta=1}^{-1} Z $ and using the 
estimates \eqref{stimFINFRAK}, which follow by Lemma 3.22 in \cite{BD}.  
Another difference is that  $ Z(x) $ is not even in $ x $. 
The $ x $-invariance properties \eqref{def:tr-in} for the symbols and  \eqref{def:R-trin}
for the smoothing operators are  checked as in \cite{BFP}. 
The last statement follows using Lemma A.2 in \cite{BFP}. 
\end{proof}

\section{Poincar\'e - Birkhoff normal form at quadratic degree}\label{sec:5}

From this section the analysis strongly differs from \cite{BD}. 

\begin{itemize}
\item 
{\bf Notation}: 
for simplicity 
 in the sequel we 
omit to write the dependence on the time $ t $ in the symbols,
smoothing remainders and maps, 
writing $a(U;x,\x)$, $ R(U) $, $ M(U) $ instead of 
$a(U;t,x,\x)$, $ R (U; t)$, $ M (U; t) $. 
\end{itemize} 

The aim of this section is to transform
system  \eqref{finalsyst} into its quadratic Poincar\'e-Birkhoff normal form, 
see system \eqref{finalsyst1012}. 
We first observe that the paradifferential vector field in  \eqref{finalsyst} of quadratic 
homogeneity is actually zero.
\begin{lemma}\label{lem:41} {\bf (Quadratic 
Poincar\'e-Birkhoff normal form 
up to smoothing vector fields)}
The system \eqref{finalsyst} with $N=2$ has the form
\begin{equation}\label{finalsyst800}
 \begin{aligned}
 \pa_{t} Z &= \ii \mk(D)E  Z +  \mathtt{R}_1(U)[Z]
+ \widetilde{\mathcal{X}}_{\geq3}(U,Z)
 \end{aligned}
 \end{equation}
 where 
 $ \mathtt{R}_1 (U) \in   \widetilde{\mathcal{R}}^{-\rho}_1\otimes\mathcal{M}_2(\mathbb{C})$ and
 \begin{equation}\label{mathcalX3}
 \widetilde{\mathcal{X}}_{\geq3}(U,Z) =  \ii \opbw\big(\mathcal{H}_{\geq2} (U;\x) \big)Z
 +  \mathtt{R}_{\geq2}(U)[Z] 
 \end{equation}
where  $ \mathcal{H}_{\geq2}(U;\x) \in \Gamma^{3/2}_{K,K',2}[r]\otimes\mathcal{M}_2(\mathbb{C})   $ 
is  a diagonal matrix of symbols independent of $x$,   
 such that 
 \begin{equation}\label{quasi-real}
 \Im \mathcal{H}_{\geq2}(U;\x)\in \Gamma^{0}_{K,K',2}[r] \otimes {\mathcal M}_2 (\C)\,,
  \end{equation}
and 
 ${\mathtt R}_{\geq2} (U) \in {\mathcal R}^{-\rho}_{K,K',2}[r] \otimes {\mathcal M}_2 (\C)$.
 The operators $\mathtt{R}_{1}(U)$ and 
  $ \widetilde{\mathcal{X}}_{\geq3}(U,Z)$ are real-to-real.
\end{lemma}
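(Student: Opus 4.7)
The plan is to apply Proposition \ref{regolo} with $N=2$ and then systematically split each object in \eqref{finalsyst} into its $1$-homogeneous and higher-order pieces, writing
\[
\underline{\zeta} = \underline{\zeta}_1 + \underline{\zeta}_{\geq 2}, \qquad H = H_1 + H_{\geq 2}, \qquad R(U) = R_1(U) + R_{\geq 2}(U),
\]
with the subscript $1$ denoting the $1$-homogeneous component (living in $\widetilde{\mathcal{F}}_1^{\mathbb{R}}$, $\widetilde{\Gamma}^1_1 \otimes \mathcal{M}_2(\mathbb{C})$, $\widetilde{\mathcal{R}}^{-\rho}_1 \otimes \mathcal{M}_2(\mathbb{C})$ respectively), and the subscript $\geq 2$ denoting the non-homogeneous remainder that vanishes to order $\geq 2$ in $U$.

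The decisive point is the vanishing $\underline{\zeta}_1 \equiv 0$ and $H_1 \equiv 0$. Indeed, Proposition \ref{regolo} ensures that both $\underline{\zeta}$ and $H$ are independent of $x$; by taking Taylor expansions this is inherited by their $1$-homogeneous parts. But any symbol in $\widetilde{\Gamma}^m_1$ satisfying the momentum condition \eqref{def:tr-in} admits the representation \eqref{homosimbo2}, whose $x$-independence would force the vanishing of all Fourier modes $e^{\pm \ii j x}$ with $j\neq 0$. Remark \ref{indipx}, applied entrywise to the $2\times 2$ matrix $H_1$, then gives $H_1 \equiv 0$, and likewise $\underline{\zeta}_1 \equiv 0$. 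This is precisely why only a smoothing Poincaré--Birkhoff normal form remains to be performed at quadratic degree.

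Having killed the $1$-homogeneous paradifferential contributions, I define
\[
\mathcal{H}_{\geq 2}(U;\xi) := \underline{\zeta}_{\geq 2}(U)\,\Omega(\xi)\,E + H_{\geq 2}(U;\xi).
\]
Since $\Omega(\xi)$ is a constant-coefficient Fourier symbol of order $3/2$ and $\underline{\zeta}_{\geq 2} \in \mathcal{F}^{\mathbb{R}}_{K,K',2}[r]$, the first summand lies in $\Gamma^{3/2}_{K,K',2}[r]\otimes\mathcal{M}_2(\mathbb{C})$; together with $H_{\geq 2} \in \Gamma^1_{K,K',2}[r]\otimes\mathcal{M}_2(\mathbb{C})$ this places $\mathcal{H}_{\geq 2}$ in the required class. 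Diagonality follows because both $E$ and $H_{\geq 2}$ are diagonal, and independence of $x$ is inherited from Proposition \ref{regolo}. For the imaginary part: $\underline{\zeta}_{\geq 2}$ is real and $\Omega(\xi) \in \mathbb{R}$, hence the first summand contributes nothing, so $\Im \mathcal{H}_{\geq 2} = \Im H_{\geq 2}$ lies in $\Gamma^0_{K,K',2}[r]\otimes\mathcal{M}_2(\mathbb{C})$ by Proposition \ref{regolo}, yielding \eqref{quasi-real}.

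To conclude, I rewrite $D_t = \tfrac{1}{\ii}\partial_t$ in \eqref{finalsyst}, use $\opbw(\Omega(\xi)E) = \Omega(D) E$, and set $\mathtt{R}_1(U) := \ii R_1(U)$, $\mathtt{R}_{\geq 2}(U) := \ii R_{\geq 2}(U)$, obtaining \eqref{finalsyst800} with $\widetilde{\mathcal{X}}_{\geq 3}$ of the form \eqref{mathcalX3}. The real-to-real character of $\mathtt{R}_1$, $\mathtt{R}_{\geq 2}$, and $\ii\opbw(\mathcal{H}_{\geq 2})$ is inherited from Proposition \ref{regolo}, since the homogeneous/non-homogeneous splitting is $\mathbb{R}$-linear and hence preserves the structure \eqref{vinello}. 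The only non-trivial step in the whole argument is the cancellation $\underline{\zeta}_1 \equiv H_1 \equiv 0$ via Remark \ref{indipx}; everything else is bookkeeping among the symbol and smoothing operator classes.
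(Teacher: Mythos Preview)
Your proof is correct and follows essentially the same route as the paper's own argument: split $\underline{\zeta}$, $H$, and $R$ into their $1$-homogeneous and higher-order parts, then use the $x$-independence from Proposition \ref{regolo} together with Remark \ref{indipx} to kill $\underline{\zeta}_1$ and $H_1$. Your write-up is simply more explicit than the paper's in naming $\mathcal{H}_{\geq 2}$ and checking \eqref{quasi-real} and the real-to-real property, but the substance is identical.
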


\begin{proof}
We expand  in homogeneity the function 
$\underline{\zeta}(U) =\zeta_1(U)+\zeta_{\geq2}(U)$, $\zeta_1\in \widetilde{\mathcal{F}}^{\mathbb{R}}_1$, 
the diagonal matrix of symbols 
$H(U;\x) =H_1(U;\x)+H_{\geq2}(U;\x)$, $H_{1}(U;\x) \in \widetilde{\Gamma}^{1}_{1}
\otimes\mathcal{M}_2(\mathbb{C})  $, 
 and the smoothing remainder
$ {R}(U)  = -\ii {\mathtt R}_1(U) -\ii {\mathtt R}_{\geq2}(U)$, 
${\mathtt R}_1(U) \in \widetilde{{\mathcal R}}^{-\rho}_{1} \otimes\mathcal{M}_2(\mathbb{C}) $.
Since  the function $\zeta_1 (U) $ and  $H_1 (U; \xi) $ 
admit an expansion as \eqref{homosimbo2} and 
are independent of $ x $ (see Proposition \ref{regolo}),  
 Remark \ref{indipx} implies that 
$\zeta_1(U) = 0 $, $  H_1(U;\x) = 0 $.  This proves  \eqref{finalsyst800}-\eqref{quasi-real}.
\end{proof}

System \eqref{finalsyst800} 
 is yet  in Poincar\'e-Birkhoff normal form 
at degree 2  up to smoothing remainders
and   the cubic term 
$ \widetilde{{\mathcal X}}_{\geq 3} $  in \eqref{mathcalX3} admits
an   energy estimate as \eqref{theoBirR}, since 
${\mathcal{H}}_{\geq2}(U;  \xi) $
is independent of $x$ and purely imaginary up to symbols of order $ 0 $,   
see \eqref{quasi-real}.

The goal  is now to transform  the quadratic smoothing term 
$ \mathtt{R}_1(U)[Z] $  in 
\eqref{finalsyst800} to Poincar\'e-Birkhoff normal form at degree $ 2 $, see
Definition \ref{Resonant}.
The remainder $\mathtt{R}_1(U)$ in \eqref{finalsyst800}
is  \emph{real-to-real} (i.e. has the form \eqref{vinello}), 
satisfies the momentum condition \eqref{def:R-trin}, thus it has the form 
\eqref{Homoop}, 
and so  we write it as  
 \begin{equation}\label{smooth-terms2} 
\mathtt{R}_1(U) 
 =\left(
\begin{matrix}
(\mathtt{R}_1(U))_{+}^{+} & (\mathtt{R}_1(U))_{+}^{-}\\
(\mathtt{R}_1(U))_{-}^{+} & (\mathtt{R}_1(U))_{-}^{-}
\end{matrix}
\right)\,,  \ 
(\mathtt{R}_1(U))_{\s}^{\s'}
\in \widetilde{\mathcal{R}}^{-\rho}_1 \, ,  \  
 (\mathtt{R}_{1}(U))_{\s}^{\s'}=\ov{ (\mathtt{R}_{1}(U))_{-\s}^{-\s'}} \, , 
 \end{equation}
 for $\s,\s'=\pm$.
 For any $\s,\s'=\pm$ we expand
\begin{equation}\label{BNF2}
(\mathtt{R}_1(U))_{\s}^{\s'} =\sum_{\ep=\pm}(\mathtt{R}_{1,\ep}(U))_{\s}^{\s'} \,, 
\end{equation}
where, for $ \ep=\pm$,  and $(\mathtt{R}_{1,\ep}(U))_{\s}^{\s'}\in 
\widetilde{\mathcal{R}}^{-\rho}_{1} $
is the 
homogeneous smoothing operator
\begin{align}\label{espansio1}
(\mathtt{R}_{1,\ep}(U))_{\s}^{\s'} z^{\s'} & = 
 \frac{1}{\sqrt{2\pi}}\sum_{j\in \Z\setminus\{0\}} 
\Big( \sum_{k\in \Z\setminus\{0\}}  (\mathtt{R}_{1,\ep}(U))_{\s,j}^{\s',k} z_{k}^{\s'}\Big) e^{\ii \s jx} 
\end{align}
with entries 
 \begin{align}\label{BNF3}
&  (\mathtt{R}_{1,\ep}(U))_{\s,j}^{\s',k} :=
\frac{1}{\sqrt{2\pi}}
\sum_{\substack{n\in \Z\setminus\{0\} \\ \ep n+\s'k=\s j}}
 (\mathtt{r}_{1,\ep})^{\s,\s'}_{n,k}u_{n}^{\ep} \, , 
 \quad j,k\in \Z\setminus\{0\} \, , 
 \end{align}
 for suitable scalar coefficients 
$ (\mathtt{r}_{1,\ep})^{\s,\s'}_{n,k} \in \C $. The restriction
$\ep n+\s'k=\s j $ is due to the momentum condition.

\begin{definition}
{\bf (Poincar\'e-Birkhoff Resonant smoothing operator)}\label{Resonant}
Given a real-to-real,  smoothing operator
$\mathtt{R}_{1} (U) \in \widetilde{\mathcal{R}}^{-\rho}_{1}
\otimes\mathcal{M}_2(\mathbb{C})$
as in \eqref{smooth-terms2}-\eqref{BNF3}, we define the Poincar\'e-Birkhoff resonant,  
real-to-real, smoothing operator
$\mathtt{R}_{1}^{res} (U) \in \widetilde{\mathcal{R}}^{-\rho}_{1}
\otimes\mathcal{M}_2(\mathbb{C})$
with  matrix entries 
 $(\mathtt{R}_{1,\ep}^{res} (U))_{\s,j}^{\s',k}$  defined as in \eqref{BNF3} such that, 
   for any  $\ep,\s,\s'=\pm $, $ j,k\in \Z\setminus\{0\} $, 
\begin{align}\label{RES1}
&  (\mathtt{R}_{1,\ep}^{res} (U))_{\s,j}^{\s',k} =
\frac{1}{\sqrt{2\pi}}
\sum_{\substack{n\in \Z\setminus\{0\}\, , \ep n+\s'k=\s j \\
\s \mk(j) - \s' \mk(k) - \ep \mk(n)  =0}}
 (\mathtt{r}_{1,\ep})^{\s,\s'}_{n,k}u_{n}^{\ep} \, .   
 \end{align} 
 \end{definition}


In the next Proposition we conjugate \eqref{finalsyst800} into  its complete quadratic 
Poincar\'e-Birkhoff normal form.

    \begin{proposition}{\bf (Quadratic Poincar\'e-Birkhoff normal form)}\label{cor:BNF10}
 There exists $ \rho_0 > 0 $ such that, for all $ \rho \geq \rho_0 $, $ K \geq K' $
 with $K' := K'  (\rho) $ given by Proposition \ref{regolo}, 
there exists $s_0>0$ such that, 
for any $s\geq s_0$, for all $0<r \leq r_0(s)$ small enough,  
and  any solution $U\in B^K_s(I;r)$ of the water waves system 
\eqref{complEQ}, there is a family of 
 real-to-real, bounded, invertible linear maps
$ \mathfrak{C}^{\theta}(U) $, $ \theta \in [0,1] $, 
such that, if $ Z $ solves \eqref{finalsyst800}, then the function
$$
Y :=  \vect{y}{\bar{y}} = (\mathfrak{C}^{\theta}(U)[Z])_{|\theta=1}
$$ 
solves
\begin{equation}\label{finalsyst1012}
 \pa_{t}Y = \ii \mk(D)E Y+\mathtt{R}_1^{res}(Y)[Y]
 +  \mathcal{X}_{\geq3}(U,Y)
 \end{equation}
where:

\noindent
$ \bullet $ $ E$ is the matrix in \eqref{complEQ} and $\mk(D)$ has symbol  \eqref{dispersionLaw};

\noindent
$ \bullet $
${\mathtt{R}}_1^{res} (Y) \in 
 \widetilde{\mathcal{R}}^{- (\rho - \rho_0)}_1
 \otimes\mathcal{M}_2(\C) $ 
  is the real-to-real  Poincar\'e-Birkhoff resonant smoothing operator introduced in  
 Definition \ref{Resonant};

\noindent
$ \bullet $  $ \mathcal{X}_{\geq3}(U,Y) $ has the form 
 \begin{equation}\label{Stimaenergy100}
 \mathcal{X}_{\geq3}(U,Y)
 =\vect{\mathcal{X}^{+}_{\geq3}(U,Y)}{\ov{\mathcal{X}_{\geq3}^{+}(U,Y)}}
 :=
 \ii\opbw({\mathcal{H}}_{\geq2}(U;  \xi) )[Y]
 +\mathfrak{R}_{\geq 2}(U)[Y]
 \end{equation}
where 
 ${\mathcal{H}}_{\geq2}(U;  \xi )$ is defined in \eqref{mathcalX3}
 and satisfies \eqref{quasi-real},
 while $\mathfrak{R}_{\geq 2}(U)$ is a matrix 
 of real-to-real smoothing operators in 
 $\mathcal{R}^{-(\rho - \rho_0)}_{K,K',2}[r]\otimes\mathcal{M}_2(\C)$;

\noindent 
$ \bullet $  the map $\mathfrak{C}^{\theta}(U)$ 
satisfies, for any $0\leq k\leq K-K'$,  $V\in C^{K-K'}_{*\R}(I;\dot{H}^{s}(\T;\C^2)) $, 
\begin{equation}\label{stimafinaleFINFRAK}
\begin{aligned}
& \| \pa_{t}^{k}\mathfrak{C}^{\theta}(U)[V]\|_{\dot{H}^{s-\frac{3}{2}k}}+\| \pa_{t}^{k}(\mathfrak{C}^{\theta}(U))^{-1}[V]\|_{\dot{H}^{s-\frac{3}{2}k}} \\ 
& \leq \|V\|_{k,s}(1+C_{s,r,K} \|U\|_{K,s_0}) 
+ C_{s,r,K} \|V\|_{k,s_0}\|U\|_{K,s} \,,
\end{aligned}
\end{equation}
uniformly in $\theta\in [0,1]$.
Moreover the map $\mathfrak{C}^{\theta}(U) = U + \theta M_1 (U)[U] +
M_{\geq 2} (\theta; U)[U] $ where $ M_1 (U) $ is in  
$ \widetilde {\cal M}_{1} \otimes {\cal M}_2 ( \C ) $ 
and  $ M_{\geq 2} (\theta; U) \in {\cal M}_{K,K',2}[r] \otimes {\cal M}_2 ( \C ) $
with estimates uniform in $ \theta \in [0,1] $. 
\end{proposition}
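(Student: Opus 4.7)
The plan is to eliminate the non-resonant part of the quadratic smoothing remainder $\mathtt{R}_1(U)[Z]$ in \eqref{finalsyst800} by conjugating with the time-$1$ flow of a $U$-dependent smoothing generator. Split $\mathtt{R}_1(U) = \mathtt{R}_1^{res}(U) + \mathtt{R}_1^{\perp}(U)$, with $\mathtt{R}_1^{res}$ as in Definition \ref{Resonant} and $\mathtt{R}_1^{\perp}$ collecting the Fourier coefficients with $\s\mk(j) - \s'\mk(k) - \ep\mk(n) \neq 0$. I would look for a generator $S(U)$ with the same structural form as $\mathtt{R}_1(U)$---real-to-real, momentum-preserving, and in $\widetilde{\mathcal R}_1^{-(\rho-\rho_0)}\otimes\mathcal M_2(\C)$---and define $\mathfrak{C}^\theta(U)$ as the flow $\pa_\theta \mathfrak{C}^\theta = S(U)\mathfrak{C}^\theta$, $\mathfrak{C}^0 = \mathrm{Id}$.

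Setting $Y = \mathfrak{C}^1(U)Z = Z + S(U)[Z] + (\text{cubic})$ and differentiating in $t$, using that $U$ satisfies an equation of the same shape as $Z$, a direct computation gives
\begin{align*}
\pa_t Y = \ii\mk(D)E\,Y + \mathtt{R}_1(U)[Z] + \bigl( S(\ii\mk(D)EU)[Z] + S(U)[\ii\mk(D)EZ] - \ii\mk(D)E\,S(U)[Z] \bigr) + (\text{cubic}).
\end{align*}
Requiring the bracket to cancel $\mathtt{R}_1^{\perp}(U)[Z]$ yields the homological equation, with explicit non-resonant solution
\begin{align*}
(\mathtt{s}_\ep)^{\s,\s'}_{n,k} = \frac{\ii\,(\mathtt{r}_{1,\ep})^{\s,\s'}_{n,k}}{\s\mk(j) - \s'\mk(k) - \ep\mk(n)}, \qquad \ep n + \s' k = \s j.
\end{align*}
To show that $S(U) \in \widetilde{\mathcal R}_1^{-(\rho-\rho_0)}$ for some $\rho_0$ independent of the data, I would invoke a small divisor bound of the form $|\s\mk(j) - \s'\mk(k) - \ep\mk(n)| \gtrsim \max(|j|,|k|,|n|)^{-\rho_0}$ outside the (finite) resonance set, following from the superlinearity of the gravity-capillary dispersion $\mk(\xi) \sim \sqrt{\kappa}\,|\xi|^{3/2}$ and encoded in Lemma \ref{stimedalbasso}.

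With $S(U)$ in hand, existence and the estimates \eqref{stimafinaleFINFRAK} for $\mathfrak{C}^\theta(U)$ follow from a standard Duhamel argument exploiting that $S(U)$ is bounded on $\dot H^s$ by $\lesssim \|U\|_{K,s_0}$ and tame in $\|U\|_{K,s}$, with the loss of $\rho_0$ derivatives absorbed in the choice $\rho \geq \rho_0 + 3/2$. Conjugating \eqref{finalsyst800} by $\mathfrak{C}^1(U)$ produces $\ii\mk(D)E\,Y + \mathtt{R}_1^{res}(U)[Z]$ at quadratic order; substituting $U = Y + (\text{quadratic})$ and $Z = (\mathfrak{C}^1)^{-1}Y = Y + (\text{quadratic})$ inside $\mathtt{R}_1^{res}(U)[Z]$ turns it into $\mathtt{R}_1^{res}(Y)[Y]$ plus cubic smoothing corrections absorbed in $\mathfrak R_{\geq 2}(U)[Y]$. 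All other contributions---the commutator of $\mathfrak{C}^\theta$ with $\widetilde{\mathcal X}_{\geq 3}$ and the deviations coming from higher-order terms in the equations for $U$ and $Z$---are of degree $\geq 3$ and, by Proposition \ref{composizioniTOTALI}, lie either in $\mathcal R_{K,K',2}^{-(\rho-\rho_0)}[r]\otimes\mathcal M_2(\C)$ or modify the symbol $\mathcal H_{\geq 2}$ of \eqref{mathcalX3} within $\Gamma_{K,K',2}^{3/2}[r]\otimes\mathcal M_2(\C)$ while preserving \eqref{quasi-real}, since commutators with smoothing operators gain derivatives.

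The main obstacle is the small divisor estimate, which must hold \emph{for all} parameter values $(g,\kappa,h)$ with $\kappa>0$: unlike \cite{BD}, where a measure-theoretic argument discards a bad set of parameters, one cannot avoid resonances here, so the argument must combine the finiteness of the $3$-wave resonance set---which confines all the trouble to a bounded region of frequency space---with a purely analytic lower bound for $\mk$ at high frequencies. A secondary subtlety is to preserve throughout the real-to-real structure \eqref{vinello}, the conjugation relation in \eqref{smooth-terms2}, the momentum condition \eqref{def:R-trin}, and the expansion $\mathfrak{C}^\theta(U) = \mathrm{Id} + \theta M_1(U) + M_{\geq 2}(\theta;U)$, so that $\mathfrak{C}^\theta$ fits the class used by Proposition \ref{composizioniTOTALI}.
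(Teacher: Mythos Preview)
Your approach is essentially the paper's: define $\mathfrak{C}^\theta(U)$ as the flow of a smoothing generator $S(U)$ solving the homological equation with exactly the Fourier coefficients you wrote, conjugate \eqref{finalsyst800}, and then substitute $U\to Y$ inside $\mathtt{R}_1^{res}$. Two refinements are worth noting.

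First, Lemma \ref{stimedalbasso} actually gives a \emph{uniform} lower bound $|\s\mk(j)-\s'\mk(k)-\ep\mk(n)|\geq\mathtt{c}>0$ on the non-resonant set, not merely a polynomial one; hence the generator remains in $\widetilde{\mathcal R}_1^{-\rho}$ with no loss at all from small divisors. The loss $\rho_0$ in the statement arises elsewhere: from the map $M(U)\in\Sigma\mathcal M^{m_1}_{K,1,1}$ in \eqref{WWFou} (which enters when you compute $\pa_t S(U)=S(\ii\mk(D)EU)+S(\ii M(U)U)$) and from the map $\widetilde{\mathtt M}(U)\in\Sigma\mathcal M^{m_2}_{K,K',1}$ relating $Y$ and $U$ (which enters when you replace $\mathtt{R}_1^{res}(U)$ by $\mathtt{R}_1^{res}(Y)$). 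The paper takes $\rho_0=\max(m_1,m_2)$.

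Second, conjugating $\ii\opbw(\mathcal H_{\geq2})$ by the smoothing flow $\mathfrak{C}^\theta(U)$ never modifies the symbol: by Proposition \ref{composizioniTOTALI}(i) the commutator of a smoothing operator with a paradifferential one is smoothing, so $\mathcal H_{\geq2}$ appears unchanged in \eqref{Stimaenergy100} and all corrections land in $\mathfrak R_{\geq2}(U)$.
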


In order to prove Proposition \ref{cor:BNF10} we first
 provide lower bounds on the 
``small divisors'' which appear in the Poincar\'e-Birkhoff reduction procedure.

 \subsection{Three waves interactions}

We  analyze the possible three waves interactions among the
 linear frequencies \eqref{dispersionLaw}.
  We first notice that they admit an expansion as 
 \be\label{omega-n}
\mk(n)= 
\sqrt{ |n| \tanh(h|n|)(g +\kappa n^{2})} = 
\sqrt{\kappa } |n|^{\frac32} + {\mathtt r} (n ) \, , \quad 
|{\mathtt r} (n )| \leq  C |n|^{-\frac{1}{2}}
\ee
for some constant $ C := C(g,\kappa, h) >  0 $.

\begin{lemma}{\bf ($ 3 $-waves interactions)}
\label{stimedalbasso}
There exist $ \mathtt{c} $, $ \mathtt{C} > 0 $ such that   
for any $n_1,n_2,n_3\in \mathbb{Z}\setminus\{0\}$, 
$\s,\s'=\pm$,
such that 
\begin{equation}\label{moment}
 n_1+\s n_2+\s' n_3=0 \,, 
 \end{equation}
and $ \max (|n_1|, |n_2|, |n_3|) \geq  \mathtt{C}  $, we have 
\begin{equation}\label{stima1}
|\mk(n_1)+\s \mk(n_2)+\s'\mk(n_3)|\geq \mathtt{c} \, . 
\end{equation}
If $ \max (|n_1|, |n_2|, |n_3|) < \mathtt{C}  $, then, either 
the phase 
$ \mk(n_1)+\s \mk(n_2)+\s'\mk(n_3) $ is zero, or \eqref{stima1} holds.
 \end{lemma}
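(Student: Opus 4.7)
The plan is to exploit the super-linearity of the dispersion law $\mk$ encoded in the expansion \eqref{omega-n}, in order to reduce matters to an elementary combinatorial estimate on the leading-order phase. Writing $\Psi := \mk(n_1) + \s\mk(n_2) + \s'\mk(n_3)$, the expansion \eqref{omega-n} yields
\[
\Psi \ = \ \sqrt{\kappa}\,\Phi_0 + \rho, \qquad \Phi_0 \ := \ |n_1|^{3/2} + \s\,|n_2|^{3/2} + \s'\,|n_3|^{3/2},
\]
with $|\rho| \leq 3C$ uniformly in $n_i \in \Z\setminus\{0\}$. Since the correction $\rho$ is bounded, the lemma reduces to proving that $|\Phi_0|$ grows without bound as $\max(|n_1|,|n_2|,|n_3|) \to \infty$, under the momentum constraint \eqref{moment}.

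To analyze $\Phi_0$, I would first observe that flipping $(\s, \s')$ to $(-\s, -\s')$ reverses the signs of both $\Phi_0$ and the left-hand side of \eqref{moment}, so we may assume that at most one of the three coefficients $1, \s, \s'$ is negative. If all three are positive, then $|\Phi_0| \geq \max|n_i|^{3/2}$ trivially. Otherwise, after relabeling indices, we may take $\s = 1$ and $\s' = -1$, so that \eqref{moment} forces $n_3 = n_1 + n_2$. A short case split on the signs of $n_1, n_2$ (same sign versus opposite sign) shows that the multiset $\{|n_1|, |n_2|, |n_3|\}$ always equals $\{a, b, a+b\}$ for some integers $a, b \geq 1$ with $\max|n_i| = a+b$, and hence
\[
|\Phi_0| \ = \ (a+b)^{3/2} - a^{3/2} - b^{3/2}.
\]

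The main step is now the uniform superadditivity estimate for $x \mapsto x^{3/2}$ on $[1,\infty)$: the mean value theorem gives $(a+b)^{3/2} - a^{3/2} \geq \tfrac{3}{2}\,a^{1/2} b$, and assuming WLOG $a \geq b \geq 1$, one deduces $|\Phi_0| \geq \tfrac{3}{2}\,a^{1/2} b - b^{3/2} \geq \tfrac{1}{2}\,a^{1/2} b \geq c_0 \sqrt{a+b}$ for some absolute $c_0 > 0$. Therefore $|\Phi_0| \gtrsim \sqrt{\max|n_i|}$, which for $\max|n_i| \geq \mathtt{C}$ large enough dominates the $O(1)$ error $\rho$ and proves \eqref{stima1}. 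For $\max|n_i| < \mathtt{C}$ only finitely many triples $(n_1, n_2, n_3) \in (\Z\setminus\{0\})^3$ satisfy \eqref{moment}, so the corresponding values of $\Psi$ form a finite set whose nonzero elements are bounded below by some $\mathtt{c}' > 0$, and enlarging $\mathtt{c}$ gives the stated dichotomy. The only mildly delicate point is keeping the constant in the superadditivity estimate uniform down to the boundary $a=b=1$, which is exactly what the MVT calculation above ensures.
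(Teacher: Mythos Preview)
Your approach is the same as the paper's: split off the leading $\sqrt{\kappa}\,|n|^{3/2}$, bound the remainder by $O(1)$, and reduce to the superadditivity inequality $(a+b)^{3/2}-a^{3/2}-b^{3/2}\gtrsim\sqrt{a+b}$ for integers $a,b\geq 1$. The paper proves the latter by rationalizing twice; your mean-value-theorem argument is a bit quicker and equally valid.

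Two small corrections are needed. First, the symmetry you invoke is false as stated: flipping $(\s,\s')\to(-\s,-\s')$ does \emph{not} negate $\Phi_0=|n_1|^{3/2}+\s|n_2|^{3/2}+\s'|n_3|^{3/2}$, since the coefficient of $|n_1|^{3/2}$ stays $+1$. The correct reduction (which the paper uses implicitly) is that, up to an overall sign of $\Psi$ and a relabeling of the three indices, the sign pattern $(1,\s,\s')$ can always be brought to $(+,+,+)$ or $(+,+,-)$; the paper simply treats $(-,-)$ as its main case and declares the mixed cases equivalent by reordering. Second, your displayed equality $|\Phi_0|=(a+b)^{3/2}-a^{3/2}-b^{3/2}$ fails when $n_1,n_2$ have opposite signs in your reduced case $\s'=-1$: then the largest modulus $a+b$ is $|n_1|$, whose coefficient in $\Phi_0$ is $+1$, while $|n_2|$ also carries $+1$, so $\Phi_0=(a+b)^{3/2}+b^{3/2}-a^{3/2}$ (say) rather than $(a+b)^{3/2}-a^{3/2}-b^{3/2}$. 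What is always true, and all you need, is the \emph{inequality} $|\Phi_0|\geq(a+b)^{3/2}-a^{3/2}-b^{3/2}$, which holds because the $(a+b)^{3/2}$ term dominates the other two regardless of their signs.
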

 
 \begin{proof} 
If $\s=\s'=+$ then the bound \eqref{stima1} is trivial for all $ n_1, n_2, n_3 \in \Z \setminus 
\{ 0\} $. 
Assume $\s= - $ and $\s'=-$ (the cases $(\s,\s')=(+,-)$ and $(\s,\s')=(-,+)$ are the same, up to reordering the indexes). 
Then, by  \eqref{moment}, we have  $ n_1 =  n_2 + n_3  $ 
and we may suppose that  $ | n_1| \geq |n_2|, | n_3 | $, otherwise
the bound \eqref{stima1} is trivial. Without loss of generality we assume
$n_1>0$, 
thus, also  
$ n_2 $ and $ n_3 $ are positive. In conclusion
we assume that 
$ n_1 \geq n_2 \geq n_3 \geq 1  $. By \eqref{omega-n}, 
 \begin{align}
 |\mk(n_1) - \mk(n_2)  -\mk(n_3) | 
 & =  | \mk(n_2+ n_3) - \mk(n_2)  - \mk(n_3) |\nonumber \\
 &  \geq \sqrt{\kappa} 
 \big(  (n_2 + n_3)^{\frac32} -  n_2^{\frac32} -  n_3^{\frac32}  \big)  - 
  \frac{3C}{\sqrt{n_3}} \, . \label{lobb}
 \end{align}
 Now 
 \begin{align}
(n_2+n_3)^{\frac32} -  n_2^{\frac32} -  n_3^{\frac32}  
 &
 = 
 \frac{ (n_2+n_3)^{3} -  (n_2^{\frac32} +  n_3^{\frac32})^2 }{  
  (n_2+n_3)^{\frac32} + n_2^{\frac32} +  n_3^{\frac32}  }  
 =
   \frac{  3 (n_2^2 n_3 +  n_2 n_3^2)
   - 2 n_2^{\frac32} n_3^{\frac32}    }{  
  (n_2+n_3)^{\frac32} + n_2^{\frac32} +  n_3^{\frac32}  }  \nonumber \\
  & 
   =
   \frac{  9 (n_2^2 n_3 +  n_2 n_3^2)^2
   - 4 n_2^3 n_3^3    }{  
  (n_2+n_3)^{\frac32} + n_2^{\frac32} +  n_3^{\frac32}  } 
     \frac{  1   }{  
 3 (n_2^2 n_3 +  n_2 n_3^2) + 2 n_2^{\frac32} n_3^{\frac32}  }   \nonumber \\
 & = 
  \frac{  9 (n_2^4 n_3^2 +  n_2^2 n_3^4 ) 
   + 14 n_2^3 n_3^3    }{  
\big(  (n_2+n_3)^{\frac32} + n_2^{\frac32} +  n_3^{\frac32} \big) \big(  3 (n_2^2 n_3 +  n_2 n_3^2) + 2 n_2^{\frac32} n_3^{\frac32} \big) }  \nonumber \\
& 
\geq \frac{9}{16(1+\sqrt 2)}\sqrt{n_2} \geq \frac{\sqrt{n_2}}{5}  \label{lb3}
 \end{align} 
using that $ n_2 \geq n_3 \geq	 1 $. By \eqref{lobb} and \eqref{lb3} we deduce that the phase
\be\label{lbfin}
|\mk(n_1) - \mk(n_2)  - \mk(n_3) | \geq 
\sqrt{n_2} \big( \frac{\sqrt{\kappa}}{5}  -   \frac{3C}{\sqrt{n_2 \, n_3}} \big) 
\geq 
\sqrt{n_2}  \frac{\sqrt{\kappa}}{10} 
\ee
if $ n_2 n_3 \geq ( 30 C)^2 / \kappa  $, in particular, since $ n_3 \geq 1 $, if
$$
n_2 \geq  C_1 := (30 C)^2 / \kappa \, .
$$  
Recall that $ n_1 = n_2 + n_3 \leq 2 n_2 $. Therefore $ n_2 \geq n_1 / 2 $ and
we conclude that
$$ 
n_1 = \max( n_1, n_2, n_3 ) \geq 2 C_1   \ 
 \Longrightarrow \  n_2 \geq C_1  
 \ 
 \Longrightarrow \ 
| \mk(n_1) - \mk(n_2)  - \mk(n_3) | \geq 
\sqrt{n_2}  \frac{\sqrt{\kappa}}{10} 
 \, .
$$
For the finitely many integers $ n_1, n_2, n_3 $
 satisfying 
$ \max (|n_1|, |n_2|, | n_3|) \leq {\mathtt C} :=2C_1  $ such that the phase 
$ \mk(n_1) - \mk(n_2) - \mk(n_3) \neq 0 $, the lower bound 
\eqref{stima1} is trivial. 
\end{proof}

\begin{remark}\label{rem:no-reso}
The constant $ C (g, \kappa, h) $ in \eqref{omega-n} is bounded by 
$ c ( \sqrt{\kappa} \,  h^{- 2} + g \kappa^{-1/2} ) $, for some constant
$ c > 0 $ independent  of $ \kappa, g, h $.
Then, there are $ h_0, \kappa_0  $ such that, 
if $ h \geq h_0 $,  $ \kappa > \kappa_0 g $, then \eqref{lbfin} holds, for all
$ n_1, n_2, n_3 \in \Z \setminus \{0\}  $. 
As a consequence there are no $ 3 $-waves interactions, i.e. 
\eqref{stima1} holds for all 
$ n_1, n_2, n_3 \in \Z \setminus \{0\} $. 
\end{remark}

Notice that, for some  values of the parameters $ (g, \kappa, h) $, 
there could be $ 3 $-waves interactions.

\subsection{Poincar\'e-Birkhoff normal form of the 
smoothing quadratic terms}
\label{primostepBNF}

In order to prove Proposition \ref{cor:BNF10},
  we conjugate \eqref{finalsyst800} 
  with  the flow 
 \be\label{BNFstep1}
\partial_{\theta} \mathfrak{C}^{\theta}(U)  = 
\mathtt{G}_1(U) \mathfrak{C}^{\theta}(U) \, , 
\quad \mathfrak{C}^{0}(U) = {\rm Id} \, ,  
\ee
with an operator 
$ \mathtt{G}_1(U) $ in $  \widetilde{\mathcal{R}}^{-\rho}_1 
\otimes\mathcal{M}_2(\C)$, 
of the same form of 
$\mathtt{R}_1(U)$ in 
\eqref{smooth-terms2}-\eqref{BNF3}, 
to be determined. 
We introduce the new variable
$ Y:=\vect{y}{\ov{y}} = \big(\mathfrak{C}^{\theta}(U)[Z]\big)_{|_{\theta=1}} $.

\begin{lemma} \label{lem:BNF1}
If $\mathtt{G}_1(U)\in 
\widetilde{\mathcal{R}}^{-\rho}_1
\otimes\mathcal{M}_{2}(\C)$ solves 
the homological equation
\begin{equation}\label{omoBNF}
\mathtt{G}_1(\ii \mk(D)EU)+ \big[  \mathtt{G}_1( U), 
\ii \mk(D)E \big]+ \mathtt{R}_1(U)= \mathtt{R}_1^{res}(U) \, ,
\end{equation}
where $\mathtt{R}_1^{res}(U)$ is the Poincar\'e-Birkhoff  resonant operator 
in Definition \ref{Resonant},
then  
\begin{equation}\label{BNF12}
 \pa_{t}Y= \ii \mk(D)E  Y+
 \mathtt{R}_1^{res}(U)[Y]
 +
 \ii\opbw\big(
  \mathcal{H}_{\geq2}(U;\x)\big)Y
  + {\mathtt{R}}_{\geq2}(U)[Y]
 \end{equation}
where  
$ \mathcal{H}_{\geq2}(U;\x) $ is the same diagonal matrix of 
symbols  in \eqref{mathcalX3} and
$ {\mathtt{R}}_{\geq 2}(U)$ is a real-to-real  smoothing operator in  
$\mathcal{R}^{-\rho + m_1}_{K,K',2}[r]\otimes\mathcal{M}_{2}(\C) $
with  $m_1\geq3/2$ (fixed below \eqref{WWFou}). 

The flow  map $\mathfrak{C}^{\theta}(U)$ in \eqref{BNFstep1}
satisfies  \eqref{stimafinaleFINFRAK} 
and $\mathfrak{C}^{\theta}(U) = U + \theta M_1 (U)[U] +
M_{\geq 2} (\theta; U)[U] $ where $ M_1 (U) $ is in  
$ \widetilde {\cal M}_{1} \otimes {\cal M}_2 ( \C ) $ 
and  $ M_{\geq 2} (\theta; U) \in {\cal M}_{K,K',2}[r] \otimes {\cal M}_2 ( \C ) $
with estimates uniform in $ \theta \in [0,1] $. 
 \end{lemma}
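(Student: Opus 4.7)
\begin{pf}
The plan is to compute $\partial_t Y$ with $Y = \mathfrak{C}^1(U)[Z]$ using the flow equation \eqref{BNFstep1} together with \eqref{finalsyst800}, identify the terms homogeneous of degree one in $U$ as exactly the left-hand side of \eqref{omoBNF}, and collect everything else into the remainder $\mathtt{R}_{\geq 2}(U)$. A direct differentiation, using the Taylor expansion $\mathfrak{C}^\theta(U) = \mathrm{Id} + \theta\,\mathtt{G}_1(U) + O(U^2)$, the linearity of $\mathtt{G}_1$ in $U$, and the fact (from \eqref{WWFou}) that $\partial_t U = \ii\mk(D)EU + M(U)U$ with $M(U) \in \Sigma\mathcal{M}^{m_1}_{K,1,1}[r,N]\otimes\mathcal{M}_2(\C)$, yields
\[
\partial_t Y = \ii\mk(D)E\,Y + \bigl(\mathtt{G}_1(\ii\mk(D)E\,U) + [\mathtt{G}_1(U),\ii\mk(D)E] + \mathtt{R}_1(U)\bigr)[Y] + \mathcal{Q}(U,Y),
\]
where $\mathcal{Q}(U,Y)$ gathers the $O(U^2)$ Lie-type corrections from conjugating $\ii\mk(D)E$ and $\mathtt{R}_1(U)$ by $\mathfrak{C}^1(U)$, the contribution $\mathtt{G}_1(M(U)U)$ coming from time-differentiating the flow, and the conjugated cubic piece $\mathfrak{C}^1(U)\widetilde{\mathcal{X}}_{\geq 3}$. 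Under \eqref{omoBNF} the parenthesis collapses to $\mathtt{R}_1^{res}(U)$ and we land on \eqref{BNF12}.

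Next I would solve the homological equation \eqref{omoBNF} mode by mode. Writing $\mathtt{G}_1(U)$ in the same Fourier form \eqref{smooth-terms2}--\eqref{BNF3} as $\mathtt{R}_1(U)$, with coefficients $(\mathtt{g}_{1,\epsilon})^{\sigma,\sigma'}_{n,k}$, the equation reduces, on each triple $(\sigma,\sigma',\epsilon,j,k,n)$ with $\epsilon n + \sigma' k = \sigma j$, to the scalar relation
\[
\ii\bigl(\sigma\mk(j)-\sigma'\mk(k)-\epsilon\mk(n)\bigr)(\mathtt{g}_{1,\epsilon})^{\sigma,\sigma'}_{n,k} + (\mathtt{r}_{1,\epsilon})^{\sigma,\sigma'}_{n,k} = (\mathtt{r}^{res}_{1,\epsilon})^{\sigma,\sigma'}_{n,k}.
\]
On resonant triples set $(\mathtt{g}_{1,\epsilon})^{\sigma,\sigma'}_{n,k}=0$; on non-resonant triples divide by the phase. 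Lemma \ref{stimedalbasso} provides a uniform lower bound $\mathtt{c}$ on the denominator whenever it is non-zero, so the coefficients $(\mathtt{g}_{1,\epsilon})^{\sigma,\sigma'}_{n,k}$ inherit the same decay as $(\mathtt{r}_{1,\epsilon})^{\sigma,\sigma'}_{n,k}$. Hence $\mathtt{G}_1(U) \in \widetilde{\mathcal{R}}_1^{-\rho}\otimes\mathcal{M}_2(\C)$, with the real-to-real and momentum conditions passed on from $\mathtt{R}_1(U)$.

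Given $\mathtt{G}_1(U)$ in this smoothing class, the flow $\mathfrak{C}^\theta(U)$ exists and is invertible on the Sobolev scale by integrating the linear ODE \eqref{BNFstep1}; the tame bounds \eqref{stimafinaleFINFRAK} follow from Proposition \ref{composizioniTOTALI} combined with a Gronwall argument in $\theta$, and the expansion $\mathfrak{C}^\theta(U) = U + \theta M_1(U)U + M_{\geq 2}(\theta;U)U$ comes from Duhamel iteration of \eqref{BNFstep1}, exactly as in Lemma A.2 of \cite{BFP}. Finally, the structural claim on $\mathcal{Q}(U,Y)$ is verified by Proposition \ref{composizioniTOTALI}: the paradifferential contribution $\ii\opbw(\mathcal{H}_{\geq 2}(U;\xi))Y$ in $\mathfrak{C}^1(U)\widetilde{\mathcal{X}}_{\geq 3}$ survives unchanged modulo a commutator $[\mathtt{G}_1(U),\opbw(\mathcal{H}_{\geq 2})]$ that is smoothing of order $-\rho + 3/2$ and of degree $\geq 3$; the remaining items in $\mathcal{Q}$ are bilinear or higher in $U$ and smoothing of order $-\rho + m_1$, hence absorbed in $\mathtt{R}_{\geq 2}(U) \in \mathcal{R}^{-\rho + m_1}_{K,K',2}[r]\otimes\mathcal{M}_2(\C)$.

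The main technical obstacle I anticipate is the time-derivative of the flow: unlike the other terms in $\mathcal{Q}$, the piece $\mathtt{G}_1(M(U)U)$ is not a commutator and requires the stronger composition statement of Proposition \ref{composizioniTOTALI}(iii) to conclude that plugging the map $M(U)U$ into the homogeneous smoothing operator $\mathtt{G}_1$ yields a smoothing remainder of the correct order and degree with the required tame estimates. Once this is in place, all the constants, classes and reality structure match those stated in the lemma.
\end{pf}
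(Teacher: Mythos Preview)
Your proof is correct and follows essentially the same approach as the paper: a Lie expansion of the conjugation $\mathfrak{C}^1(U)\,[\,\cdot\,]\,(\mathfrak{C}^1(U))^{-1}$ applied to each piece of \eqref{finalsyst800}, plus the contribution $(\partial_t\mathfrak{C}^1)(\mathfrak{C}^1)^{-1}=\partial_t\mathtt{G}_1(U)+\cdots$, with the quadratic remainders controlled via Proposition~\ref{composizioniTOTALI} (and, exactly as you single out, item (iii) for the term $\mathtt{G}_1(M(U)U)$). One remark: your second paragraph, where you solve \eqref{omoBNF} mode by mode using Lemma~\ref{stimedalbasso}, is not part of this lemma's statement (which \emph{assumes} $\mathtt{G}_1\in\widetilde{\mathcal{R}}^{-\rho}_1\otimes\mathcal{M}_2(\C)$ solves the homological equation) but rather the content of the subsequent Lemma~\ref{LemmaOMO}; also, the scalar relation there should carry the sign $-\ii\bigl(\sigma\mk(j)-\sigma'\mk(k)-\epsilon\mk(n)\bigr)(\mathtt{g}_{1,\epsilon})^{\sigma,\sigma'}_{n,k}+(\mathtt{r}_{1,\epsilon})^{\sigma,\sigma'}_{n,k}=(\mathtt{r}^{res}_{1,\epsilon})^{\sigma,\sigma'}_{n,k}$, consistent with \eqref{omoBNF5}.
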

 
 \begin{proof}
 Since $\mathtt{G}_1(U)$ is a smoothing operator 
 then  the flow in \eqref{BNFstep1}
 is well-posed in Sobolev spaces 
 and satisfies the estimates \eqref{stimafinaleFINFRAK}, 
 as well as the last statement,  by  e.g. Lemma $A.3$ in \cite{BFP}.
To conjugate \eqref{finalsyst800} 
we apply the usual Lie expansion up to the first order 
(see for instance 
Lemma $A.1$ in \cite{BFP}). 
Denoting $ {\rm Ad}_{\mathtt{G}_1} := [ \mathtt{G}_1 , \ ] $, we have 
   \begin{align}
   \mathfrak{C}^{1}(U)\mk(D)E (\mathfrak{C}^{1}(U))^{-1} & =
 \mk(D)E+ \big[ \mathtt{G}_1(U),\mk(D)E \big]   \nonumber\\
 & +\int_{0}^{1} (1- \theta)
\mathfrak{C}^{\theta}(U) {\rm Ad}_{\mathtt{G}_1(U)}^2
[\mk(D)E] (\mathfrak{C}^{\theta}(U))^{-1} d \theta \label{core4} \, .
\end{align}
Using that $\mathtt{G}_1(U)$ belongs to 
 $\widetilde{\mathcal{R}}_{1}^{-\rho} \otimes\mathcal{M}_2(\C) $, Proposition \ref{composizioniTOTALI} and \eqref{stimafinaleFINFRAK}, 
 the integral term in  \eqref{core4}
 is a smoothing operator  in $ \mathcal{R}^{-\rho+
  \frac32}_{K,K',2}[r]\otimes\mathcal{M}_2(\C)$.
Similarly, we obtain
$$
\mathfrak{C}^{1}(U) \opbw(\mathcal{H}_{\geq2}(U;\xi)) 
 (\mathfrak{C}^{1}(U))^{-1}
=\opbw(\mathcal{H}_{\geq2}(U;\xi))
$$
 up to a matrix of smoothing operators in 
 $\mathcal{R}^{-\rho+\frac{3}{2}}_{K,K',2}[r]
 \otimes\mathcal{M}_2(\C)$.
Finally 
$$
  \mathfrak{C}^{1}(U)\big( \mathtt{R}_1(U)+
  \mathtt{R}_{\geq2}(U)\big)(\mathfrak{C}^{1}(U))^{-1}=
  \mathtt{R}_1(U) 
$$
plus a smoothing operator in 
$ \mathcal{R}^{-\rho}_{K,K',2}[r]
\otimes\mathcal{M}_2(\C)$.
 
Next we consider the contribution coming from the conjugation of $ \pa_t $. 
Applying again a Lie expansion formula 
(see Lemma $A.1$ in \cite{BFP}) 
 we get
 \begin{align}
&  \pa_t \mathfrak{C}^{1} (U) (\mathfrak{C}^{1} (U))^{-1} =  
  \pa_t\mathtt{G}_1(U) \, + 
  \nonumber
\\
 & 
 \frac{1}{2} \big[ \mathtt{G}_1(U), \pa_t \mathtt{G}_1(U) \big] 
  +\frac{1}{2}\int_0^{1} (1-\theta)^{2} \mathfrak{C}^{\theta}(U)
{\rm Ad}_{\mathtt{G}_1(U)}^2\left[ \partial_t \mathtt{G}_1(U) \right]
 (\mathfrak{C}^{\theta} (U))^{-1} d \theta \, . \label{core5}
\end{align}
Recalling \eqref{WWFou} we have 
 \begin{equation}\label{core8}
 \pa_t  \mathtt{G}_1(U) =  \mathtt{G}_1(\ii \mk(D)EU+\ii M(U)[U]) =
   \mathtt{G}_1(\ii \mk(D)EU)
 \end{equation}
 up to a term in $ \mathcal{R}^{-\rho+m_1}_{K,K',2}[r]\otimes\mathcal{M}_{2}(\C)$, where we used Proposition \ref{composizioniTOTALI}.
 By  \eqref{core8}, the fact that 
$ \mathtt{G}_1(\ii \mk(D)EU) $ is in 
$ \widetilde{\mathcal{R}}_{1}^{-\rho + (3/2) } \otimes\mathcal{M}_2(\C)   $ and \eqref{stimafinaleFINFRAK},   
we deduce that the term in \eqref{core5} belongs to 
$ \Sigma\mathcal{R}^{-\rho+m_1}_{K,K',2}[r,N]
\otimes\mathcal{M}_{2}(\C)$.
Collecting all the previous expansions,  and  
using  that  $ \mathtt{G}_1 (U) $ solves \eqref{omoBNF},  
we deduce \eqref{BNF12}. 
\end{proof}

We now solve the homological equation \eqref{omoBNF}.

\begin{lemma}{\bf (Homological equation)} \label{LemmaOMO}
Let  $ \mathtt{G}_{1}(U) $ be an operator of the 
form \eqref{smooth-terms2}-\eqref{BNF3} 
with coefficients 
\begin{equation}\label{omoBNF5}
  (\mathtt{g}_{1,\ep})^{\s,\s'}_{n,k} :=
  \frac{(\mathtt{r}_{1,\ep})^{\s,\s'}_{n,k}}{\ii 
  \big( \s \mk(j)-\s'  \mk(k)- \ep \mk(n) \big)} \, ,  
\end{equation}
for any  $\s, \s', \ep  = \pm $, $ j, n, k \in \Z \setminus \{0\} $,  
satisfying
\be\label{def:divBNF}
\s j -\s' k-\ep n=0\,, \qquad 
\s \mk(j) - \s' \mk(k) - \ep \mk(n)  \neq 0 \, , 
\ee
and $ (\mathtt{g}_{1,\ep})^{\s,\s'}_{n,k}:=0$ otherwise. Then 
$\mathtt{G}_1(U)$ is in 
$ \widetilde{\mathcal{R}}_1^{-\rho}\otimes\mathcal{M}_2(\C)  $ and
solves the homological equation \eqref{omoBNF}. 
\end{lemma}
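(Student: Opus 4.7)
The plan is to verify the two claims — the solution formula for the homological equation and the smoothing regularity of $\mathtt{G}_1(U)$ — by a direct coefficient-by-coefficient computation in the Fourier expansion \eqref{espansio1}-\eqref{BNF3}, using the uniform three-waves lower bound of Lemma \ref{stimedalbasso} to control the divisors in \eqref{omoBNF5}.

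First I would isolate, for fixed signs $\s,\s',\ep=\pm$ and integers $j,k,n$ subject to the momentum constraint $\ep n+\s' k=\s j$, the contribution of each term on the LHS of \eqref{omoBNF} to the scalar coefficient multiplying $u_n^\ep z_k^{\s'} e^{\ii \s j x}$ in the $\s$-component of the output. The substitution $U\mapsto\ii\mk(D)EU$ replaces $u_n^\ep$ by $\ii\ep\mk(n)u_n^\ep$, so $\mathtt{G}_1(\ii\mk(D)EU)$ contributes $\ii\ep\mk(n)(\mathtt{g}_{1,\ep})^{\s,\s'}_{n,k}$. For the commutator, noticing that $\ii\mk(D)E$ multiplies the $z_k^{\s'}$-parametrized Fourier coefficient of the input by $\ii\s'\mk(k)$ and the $\s j$-parametrized output coefficient by $\ii\s\mk(j)$, one finds that $[\mathtt{G}_1(U),\ii\mk(D)E]$ contributes $\ii(\s'\mk(k)-\s\mk(j))(\mathtt{g}_{1,\ep})^{\s,\s'}_{n,k}$. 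Together with the term $(\mathtt{r}_{1,\ep})^{\s,\s'}_{n,k}$ coming from $\mathtt{R}_1(U)$, equation \eqref{omoBNF} reduces to the scalar identity
\begin{equation*}
-\ii\big(\s\mk(j)-\s'\mk(k)-\ep\mk(n)\big)(\mathtt{g}_{1,\ep})^{\s,\s'}_{n,k}=(\mathtt{r}_{1,\ep}^{res})^{\s,\s'}_{n,k}-(\mathtt{r}_{1,\ep})^{\s,\s'}_{n,k}\,.
\end{equation*}
In the non-resonant case $(\mathtt{r}_{1,\ep}^{res})^{\s,\s'}_{n,k}=0$ by \eqref{RES1}, and the prescription \eqref{omoBNF5} exactly solves this equation; in the resonant case $(\mathtt{r}_{1,\ep}^{res})^{\s,\s'}_{n,k}=(\mathtt{r}_{1,\ep})^{\s,\s'}_{n,k}$ by \eqref{RES1}, while $(\mathtt{g}_{1,\ep})^{\s,\s'}_{n,k}:=0$ makes the LHS vanish, so the identity holds trivially.

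Next I would verify $\mathtt{G}_1(U)\in\widetilde{\mathcal{R}}_1^{-\rho}\otimes\mathcal{M}_2(\C)$. By construction, $\mathtt{G}_1(U)$ has matrix entries of the form \eqref{BNF3} with $(\mathtt{r}_{1,\ep})^{\s,\s'}_{n,k}$ replaced by $(\mathtt{g}_{1,\ep})^{\s,\s'}_{n,k}$. The crucial input is that, by Lemma \ref{stimedalbasso}, whenever $\s\mk(j)-\s'\mk(k)-\ep\mk(n)\neq 0$ this phase is uniformly bounded below in absolute value by $\mathtt{c}>0$, independently of $j,k,n$. Hence $|(\mathtt{g}_{1,\ep})^{\s,\s'}_{n,k}|\leq \mathtt{c}^{-1}|(\mathtt{r}_{1,\ep})^{\s,\s'}_{n,k}|$ on the non-resonant support, so the smoothing estimate required by Definition \ref{omosmoothing}(i) for $\mathtt{G}_1(U)$ follows from the corresponding one for $\mathtt{R}_1(U)$ with a constant multiplied by $\mathtt{c}^{-1}$. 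The momentum condition is inherited from $\mathtt{R}_1$ via the support $\ep n+\s' k=\s j$, which also gives the translation invariance \eqref{def:R-trin}. The real-to-real structure \eqref{vinello} is preserved because under $(\s,\s',\ep)\mapsto(-\s,-\s',-\ep)$ the phase $\s\mk(j)-\s'\mk(k)-\ep\mk(n)$ flips sign, as does $\ii$, so the identity $(\mathtt{g}_{1,-\ep})^{-\s,-\s'}_{n,k}=\overline{(\mathtt{g}_{1,\ep})^{\s,\s'}_{n,k}}$ follows from the corresponding identity for $\mathtt{r}_{1,\ep}$.

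The only subtle point will be the careful bookkeeping of signs and Fourier parametrizations when evaluating the commutator $[\mathtt{G}_1(U),\ii\mk(D)E]$ in the indexing of \eqref{espansio1}-\eqref{BNF3}; once this is settled the remaining content is purely algebraic and rests on the uniform small-divisor estimate of Lemma \ref{stimedalbasso}, which is the essential analytic ingredient.
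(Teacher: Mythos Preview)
Your proposal is correct and follows essentially the same approach as the paper: a coefficient-by-coefficient computation in the Fourier representation \eqref{espansio1}--\eqref{BNF3} leading to the scalar identity $-\ii(\s\mk(j)-\s'\mk(k)-\ep\mk(n))(\mathtt{g}_{1,\ep})^{\s,\s'}_{n,k}+(\mathtt{r}_{1,\ep})^{\s,\s'}_{n,k}=(\mathtt{r}_{1,\ep}^{res})^{\s,\s'}_{n,k}$, combined with the uniform lower bound from Lemma \ref{stimedalbasso} to ensure $\mathtt{G}_1(U)\in\widetilde{\mathcal{R}}_1^{-\rho}\otimes\mathcal{M}_2(\C)$. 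The paper is terser---it cites Lemma 6.5 of \cite{BFP} for the smoothing-operator membership rather than spelling out the coefficient bound $|(\mathtt{g}_{1,\ep})^{\s,\s'}_{n,k}|\leq\mathtt{c}^{-1}|(\mathtt{r}_{1,\ep})^{\s,\s'}_{n,k}|$ and the inheritance of the real-to-real structure---but the underlying argument is the same.
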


\begin{proof}
The coefficients in \eqref{omoBNF5} are well defined 
by \eqref{def:divBNF} and, 
 by Lemma \ref{stimedalbasso}, they 
 satisfy the uniform lower bound 
$ | \s \mk(j) - \s' \mk(k) - \ep \mk(n)| \geq {\mathtt c } $. 
Then the operator $\mathtt{G}_1(U)$
is in $\widetilde{\mathcal{R}}_1^{-\rho}\otimes\mathcal{M}_2(\C)$, see 
e.g.  Lemma $6.5$ of \cite{BFP}.

Next, recalling \eqref{smooth-terms2},  the homological 
equation \eqref{omoBNF} amounts to the equations
 \begin{equation*}
(\mathtt{G}_1(\ii \mk(D)EU))_{\s}^{\s'}
+  (\mathtt{G}_1( U))_{\s}^{\s'}  \s' \ii \mk(D) -
 \s\ii \mk(D)  (\mathtt{G}_1( U))_{\s}^{\s'} +
(\mathtt{R}_1(U))_{\s}^{\s'}=
\big(\mathtt{R}_{1}^{res}(U) \big)_{\s}^{\s'}
\end{equation*}
for $  \s,\s'=\pm $, and, setting 
$ {\mathtt F}_1 (U)  := \mathtt{G}_{1}(\ii \mk(D)EU) $ to the equations, for any 
$j,k\in \Z\setminus\{0\}$, $\ep=\pm$, 
\begin{equation}\label{omoBNF3}
\begin{aligned}
&  ({\mathtt F}_{1,\ep} (U))_{\s,j}^{\s',k}+  (\mathtt{G}_{1,\ep}( U))_{\s,j}^{\s',k} \big(-
\s\ii  \mk(j)+\s' \ii \mk(k)
\big)
+
(\mathtt{R}_{1,\ep}(U))_{\s,j}^{\s', k}\\ 
& =
\big(\mathtt{R}_{1,\ep}^{res}(U) \big)_{\s,j}^{\s', k}
 \, .
 \end{aligned}
\end{equation}
Expanding 
 $(\mathtt{G}_{1}(U))_{\s}^{\s'}$ as 
 in \eqref{BNF2}-\eqref{BNF3} with entries 
$$
(\mathtt{G}_{1,\ep}(U))_{\s,j}^{\s',k}
=\frac{1}{\sqrt{2\pi}}
\sum_{\substack{n\in \Z\setminus\{0\} \\ \ep n+\s'k=\s j}}
 (\mathtt{g}_{1,\ep})^{\s,\s'}_{n,k}u_{n}^{\ep} \, , \quad j,k\in \Z\setminus\{0\}\, , 
$$
we have  that $ {\mathtt F}_1 (U)  := \mathtt{G}_{1}(\ii \mk(D)EU) $ satisfies 
 \[
 (\mathtt{F}_{1,\ep}(U))_{\s,j}^{\s',k}=\frac{1}{\sqrt{2\pi}}
 \sum_{n\in \Z\setminus\{0\}, \ep n+\s'k=\s j}
 (\mathtt{g}_{1,\ep})^{\s,\s'}_{n,k} (\ii \mk(n) \ep ) u_{n}^{\ep}  \, .
 \]
 Hence the left hand side in \eqref{omoBNF3} has coefficients
 \begin{equation*} 
  -(\mathtt{g}_{1,\ep})^{\s,\s'}_{n,k} 
  \ii \big(\s \mk(j)  
-\s' \mk(k)- \ep \mk(n)\big)+
   (\mathtt{r}_{1,\ep})^{\s,\s'}_{n,k}  
 \end{equation*}
  for $j,k,n\in \Z\setminus\{0\}$ and  $\s,\s',\ep=\pm$ with 
 $ \ep n+\s'k=\s j$.
 Recalling Definition \ref{Resonant} 
 we deduce that $\mathtt{G}_1(U)$ with coefficients in \eqref{omoBNF5}
solves  the homological equation \eqref{omoBNF}. 
\end{proof}

\begin{proof}[{\bf Proof of Proposition \ref{cor:BNF10}}]
We apply Lemmata \ref{lem:BNF1} and \ref{LemmaOMO}. 
The change of variables that transforms \eqref{finalsyst800} into \eqref{BNF12} is 
$ Y = \mathfrak{C}^{\theta}(U) Z $ where $ \mathfrak{C}^{\theta}(U)$
is the flow map  in \eqref{BNFstep1} that satisfies \eqref{stimafinaleFINFRAK}
and the last statement in Lemma \ref{lem:BNF1}. 
Moreover, using also the last item of
Proposition \ref{regolo} 
we may express 
\be\label{YMU}
\begin{aligned}
& Y = (\mathfrak{C}^{\theta}(U)
\circ\mathfrak{F}^{\theta}(U))_{|_{\theta=1}}[U] =U+\widetilde{\mathtt{M}}(U)[U] \,, \\
& \widetilde{\mathtt{M}}(U) \in  \Sigma\mathcal{M}^{m_2}_{K,K',1}[r,2]\otimes\mathcal{M}_{2}(\C) \, , \ m_2\geq 3/ 2 \, . 
\end{aligned}
\ee  
Then system 
\eqref{BNF12} can be written as system 
\eqref{finalsyst1012} with $\mathcal{X}_{\geq 3}(U,Y)$ given in \eqref{Stimaenergy100} and
$$
{\mathfrak R}_{\geq 2} (U)  := 
  \mathtt{R}_1^{res} (U) - \mathtt{R}_1^{res} (U+\widetilde{\mathtt{M}}(U)[U]) 
  + {\mathtt{R}}_{\geq2}(U) \, .
$$
By \eqref{YMU} and Proposition \ref{composizioniTOTALI}-$(iii)$
we have that
 ${\mathfrak R}_{\geq 2} (U)\in
\Sigma\mathcal{R}^{- (\rho - \rho_0)}_{K,K',2}\otimes\mathcal{M}_{2}(\C)$
where  $ \rho_0 := \max \{ m_1, m_2 \} $.
\end{proof}

\section{Birkhoff normal form and quadratic life-span of solutions}

In this section we prove  Theorems \ref{BNFtheorem} and \ref{thm:main2}.
We first recall the Hamiltonian formalism in the complex symplectic variables
\begin{equation}\label{complexCoord}
\left(\begin{matrix}
u \\ \bar{u}
\end{matrix}\right):=\mathtt{B} \left(\begin{matrix}
\eta \\ \psi
\end{matrix}\right)=
\frac{1}{\sqrt{2}}\left(\begin{matrix}
 \Lambda\psi+\ii \Lambda^{-1}\eta \\
 \Lambda\psi-\ii\Lambda^{-1}\eta
\end{matrix}
\right), 
\left(\begin{matrix}
\eta \\ \psi
\end{matrix}\right):=\mathtt{B}^{-1} \left(\begin{matrix}
u \\ \bar{u}
\end{matrix}\right)=\frac{1}{ \sqrt{2}}
\left(\begin{matrix}
-\ii \Lambda (u-\bar{u}) \\
\Lambda^{-1}(u+\bar{u})
\end{matrix}
\right),
\end{equation}
where $\Lambda$ is the Fourier multiplier defined in \eqref{compl1}.

A vector field $X(\eta,\psi)$ and a function $H(\eta,\psi)$ assume
 the form
\begin{equation}\label{vec:compl}
X^{\mathbb{C}}:=\mathtt{B}^{\star}X:=\mathtt{B}X\mathtt{B}^{-1}\,,
\qquad H_{\mathbb{C}}:=H\circ\mathtt{B}^{-1}\,.
\end{equation}
The Poisson bracket in \eqref{PoiBra}  reads
$ \{F_{\mathbb{C}},H_{\mathbb{C}}\}:=\ii
\sum_{j\in \mathbb{Z}\setminus\{0\}}\pa_{u_{j}}H_{\mathbb{C}}\pa_{\ov{u_{j}}}F_{\mathbb{C}}
-\pa_{\ov{u_{j}}}H_{\mathbb{C}}\pa_{{u_{j}}}F_{\mathbb{C}} $. 

Given a Hamiltonian $F_{\mathbb{C}}$, expressed in the complex
variables $(u,\bar{u})$,
the associated Hamiltonian 
vector field 
$ X_{F_{\C}} $ is 
\be\label{HamVecField}
X_{F_{\C}} =
 \left(
\begin{matrix}
\ii\pa_{\bar{u}}F_{\mathbb{C}} \\
-\ii\pa_{u}F_{\mathbb{C}}
\end{matrix}
\right)
= \frac{1}{\sqrt{2 \pi}} \sum_{k\in\Z\setminus\{0\}} 
  \left(\begin{matrix}
 \ii  \pa_{ \ov{u_k}} F_{\C} \, e^{\ii k x}  \\
- \ii  \pa_{u_k} F_{\C}   \, e^{- \ii k x} 
\end{matrix} 
\right) \, ,
\ee
that we also identify, using the  standard vector field notation, with 
$$X_{F_{\C}} 
=\sum_{k\in\Z\setminus\{0\}, \sigma=\pm} \ii \s \pa_{u^{-\s}_{k}} F_\C  \, \pa_{u^{\s}_{k}} \, .
$$
If  $X_{F}$ is the Hamiltonian vector field 
 of the Hamiltonian $F:=F_{\mathbb{C}}\circ\mathtt{B}$, we have 
\begin{equation}\label{compreal}
X_{F}^{\C} := \mathtt{B}^\star X_F =  X_{F_{\C}}  \, . 
\end{equation}
The push-forward acts naturally on the commutator of nonlinear vector fields, 
defined in \eqref{nonlinCommu},  namely 
\be\label{PushF}
B^\star \bral X, Y\brar =  \bral B^\star X, B^\star Y\brar = \bral X^\C, Y^\C \brar  \, . 
\ee
Recalling  \eqref{Hamvera},  the Hamiltonian  \eqref{Hamiltonian}  admits,  
in complex coordinates,  the expansion 
$$
H_{\mathbb{C}}:=
H\circ \mathtt{B}^{-1}
=H^{(2)}_{\mathbb{C}}
+H^{(3)}_{\mathbb{C}} + \ldots 
$$
where, recalling \eqref{compVar}, \eqref{dispersionLaw}, \eqref{complex-uU}, 
\begin{equation}\label{Hamexp2}
\begin{aligned}
&H^{(2)}_{\mathbb{C}}=\sum_{j\in \mathbb{Z}\setminus\{0\}}
\mk(j)u_{j} \ov{u_j}\,,\qquad H^{(3)}_{\mathbb{C}}=
\sum_{\s_1 j_1 + \s_2 j_2 + \s_3 j_3 = 0 } H_{j_1, j_2, j_3}^{\s_1, \s_2, \s_3} u_{j_1}^{\s_1} u_{j_2}^{\s_2} u_{j_3}^{\s_3} 
\end{aligned}
\end{equation}
and  $H_{j_1, j_2, j_3}^{\s_1, \s_2, \s_3}  $ are computed in \eqref{coeffH3},
for $ j_1, j_2, j_3 \in \Z \setminus \{0\} $.

\subsection{Normal form identification and proof of Theorem \ref{BNFtheorem}} 
\label{sec:IDE}

A normal form uniqueness argument 
allows to 
identify the quadratic Poincar\'e-Birkhoff 
resonant vector field $\mathtt{R}_1^{res}(Y)[Y] $ in \eqref{finalsyst1012}
as the cubic resonant Hamiltonian vector field obtained by the 
formal Birkhoff normal form construction in \cite{CS}.

\begin{proposition} {{\bf (Identification of the quadratic resonant Birkhoff 
normal form)}}\label{lem:ide3} 
The Birkhoff resonant vector field 
$ \mathtt{R}_1^{res}(Y)[Y]  $ defined in \eqref{finalsyst1012} is equal to 
\begin{equation}\label{lem:identi}
\mathtt{R}_1^{res}(Y)[Y]= 
X_{H_{BNF}^{(3)}}
\end{equation}
where $ H_{BNF}^{(3)} $ is the cubic Birkhoff normal form Hamiltonian in \eqref{H3bnf}. 
\end{proposition}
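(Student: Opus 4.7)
My approach is a normal-form uniqueness argument in the spirit of \cite{BFP}, but much simpler since we identify only the cubic Birkhoff Hamiltonian. The key observation is that any close-to-identity change of variable that puts a quadratic vector field into Poincar\'e--Birkhoff normal form leaves the resonant Fourier components of that vector field invariant. In the complex symplectic variables $U = \mathtt{B}(\eta,\psi)$ the water waves system \eqref{WW1} is Hamiltonian, with the cubic Hamiltonian $H^{(3)}_{\mathbb{C}}$ of \eqref{Hamexp2} generating the quadratic part $X_{H^{(3)}_{\mathbb{C}}}$ of $\partial_t U - \ii\Omega(D)EU$. Moreover, by the last statements of Propositions \ref{regolo} and \ref{cor:BNF10}, the composed change of variable $\Phi := \mathfrak{C}^1(U)\circ \mathfrak{F}^1(U)\circ \mathtt{B}$ taking $(\eta,\psi)$ to $Y$ admits an expansion $Y = \mathtt{B}(\eta,\psi) + \widetilde M_1(U)[U] + M_{\geq 2}(U)[U]$ with $\widetilde M_1$ bilinear, so $\Phi$ equals $\mathtt{B}$ plus strictly higher-order terms.

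\noindent\textbf{Key identity.} I would then apply the standard Lie-expansion identity. If $\partial_t U = LU + Q(U)[U] + O(U^3)$ with $L = \ii\Omega(D)E$ and $Y = U + M(U)[U] + O(U^3)$ where $M(U)$ has the form \eqref{Homoop}, a direct substitution yields $\partial_t Y = LY + Q(Y)[Y] - [L,M](Y)[Y] + O(Y^3)$, where $[L,M](Y)[Y] := LM(Y)[Y] - M(LY)[Y] - M(Y)[LY]$. In Fourier coordinates the bracket $[L,M]$ multiplies the $(\sigma,\sigma',\epsilon)$-coefficient on a triple $(j,k,n)$ with $\sigma j - \sigma' k - \epsilon n = 0$ by the divisor $\ii(\sigma\Omega(j) - \sigma'\Omega(k) - \epsilon\Omega(n))$, which vanishes exactly on the resonant set \eqref{3-w-res}. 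Hence the restriction of the quadratic vector field to resonant Fourier triples is invariant under the change of variable.

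\noindent\textbf{Conclusion and main obstacle.} By Proposition \ref{cor:BNF10} the transformed quadratic vector field equals $\mathtt{R}_1^{res}(Y)[Y]$, which by Definition \ref{Resonant} is already supported only on resonant triples; therefore it coincides with the restriction of $X_{H^{(3)}_{\mathbb{C}}}(Y)$ to those triples. On the other hand, comparing \eqref{H3bnf} with \eqref{Hamexp2} shows that $H^{(3)}_{BNF}$ is obtained from $H^{(3)}_{\mathbb{C}}$ by restricting the sum to indices satisfying \eqref{3-w-res}; since the Hamiltonian vector field \eqref{HamVecField} acts monomial-by-monomial on the Fourier expansion, $X_{H^{(3)}_{BNF}}$ is exactly the same resonant restriction of $X_{H^{(3)}_{\mathbb{C}}}$, whence \eqref{lem:identi}. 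The delicate points in making this rigorous are (i) guaranteeing that the paradifferential/smoothing flows $\mathfrak{F}^1, \mathfrak{C}^1$ genuinely admit a bilinear-at-lowest-order expansion, which is precisely the content of the final sentences of Propositions \ref{regolo} and \ref{cor:BNF10}; and (ii) ruling out any quadratic paradifferential contribution that could compete with the smoothing resonant term, which is resolved by Lemma \ref{lem:41}, where $x$-translation invariance and Remark \ref{indipx} force the quadratic paradifferential symbol to vanish identically.
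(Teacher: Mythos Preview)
Your approach is the same normal-form uniqueness argument as the paper's: one shows that under any close-to-identity change of variable $Y=U+M(U)[U]+O(U^3)$ the quadratic vector field is modified by a term of the form $[L,M]$ (equivalently, the nonlinear commutator $\bral S,X_{H^{(2)}_\C}\brar$), whose Fourier coefficients carry the divisor $\ii(\s\mk(j)-\s'\mk(k)-\ep\mk(n))$ and hence vanish on the resonant set. Your ``Key identity'' is exactly the content of Lemma \ref{Lieexp-Inve} and of the paper's identities \eqref{condizioniSez4}--\eqref{sononelKerSez4}.

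There is, however, one omission in your chain of transformations. The full map from $(\eta,\psi)$ to $Y$ is not $\mathfrak{C}^1(U)\circ\mathfrak{F}^1(U)\circ\mathtt{B}$ but (see \eqref{var:finSez4})
\[
Y={\bf F}^{1}(U)\circ\mathtt{B}\circ\mathcal{G}\vect{\eta}{\psi},
\]
where $\mathcal{G}$ is the Alinhac good-unknown map \eqref{Alinach-good}, i.e.\ $\psi\mapsto\omega=\psi-\opbw(B(\eta,\psi))\eta$. The variable $U$ appearing in Propositions \ref{Formulazione}, \ref{regolo}, \ref{cor:BNF10} is $\mathtt{B}(\eta,\omega)$, \emph{not} the symplectic variable $\mathtt{B}(\eta,\psi)$ in which $X_{H^{(3)}_\C}$ is the quadratic part of the vector field. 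Since $B(\eta,\psi)\in\Sigma\mathcal{F}^{\R}_{K,0,1}[r,N]$, the map $\mathcal{G}$ contributes its own nontrivial bilinear term, denoted $\mathtt{S}_2$ in the paper (Step~1, equation \eqref{S34Sez4}), which must be absorbed into your $M$. This does not damage your conclusion: the extra piece $\bral\mathtt{S}_2^\C,X_{H^{(2)}_\C}\brar$ is again supported off the resonant set, so $\Pi_{\ker}$ kills it just as it kills the contribution from $\mathfrak{F}^1,\mathfrak{C}^1$. The paper carries out precisely this computation by treating $\mathcal{G}$, $\mathtt{B}$ and ${\bf F}^1$ in three separate steps and collecting the bilinear generators as $\mathtt{S}_2^\C+\mathtt{T}_2$ in \eqref{condizioniSez4}. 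Once you insert $\mathcal{G}$ into your $\Phi$, your argument matches the paper's.
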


The proof follows the ideas developed in Section $7$ in \cite{BFP}. 
Recalling \eqref{Hamvera}, we first expand  the water waves Hamiltonian vector field in  
\eqref{WW1}-\eqref{Hamiltonian}
in degrees of homogeneity
\be\label{HVF2Sez4}
X_H =  X_1 + X_2 +  X_{\geq 3}   \qquad {\rm where} \qquad 
X_{1}:=X_{H^{(2)}}, \  X_{2}:=X_{H^{(3)}} \, ,
\ee
and $X_{\geq 3}  $ collects the higher order terms. 
System \eqref{finalsyst1012} has been obtained 
conjugating 
\eqref{WW1} 
under the map 
\begin{equation}\label{var:finSez4}
Y = {\bf F}^{1}(U) \circ \mathtt{B}\circ\mathcal{G}\vect{\eta}{\psi} \, ,
\end{equation}
where  $ {\mathcal G} $ is 
the good-unknown transformation (see \eqref{omega0})
\be\label{Alinach-good} 
\vect{\eta}{\omega} =
{\mathcal G} \vect{\eta}{\psi} := 
 \vect{\eta}{ \psi - \opbw{(B(\eta, \psi)) \eta}  } \, ,
\ee
the map $ \mathtt{B} $ is defined in \eqref{complexCoord}
(see \eqref{compVar}  and Proposition \ref{Formulazione}), 
and 
\begin{equation}\label{mappaTotaSez4}
{\bf F}^{\theta}(U):=\mathfrak{C}^{\theta}(U)\circ \mathfrak{F}^{\theta}(U)\, , 
\;\; \theta\in [0,1]\,,
\end{equation}
where $\mathfrak{F}^{\theta}(U)$, $\mathfrak{C}^{\theta}(U)$ 
are defined respectively in  Propositions \ref{regolo} and \ref{cor:BNF10}.
In order to identify the quadratic 
vector field in system \eqref{finalsyst1012},  
we perform a Lie commutator expansion,  
up to terms of homogeneity at least $3$. 
Notice that the quadratic  term in \eqref{finalsyst1012} may arise by only the conjugation of
 $ X_1 + X_2  $ under the
homogeneous components of the paradifferential transformations $ {\mathcal G} $ 
and $ {\bf F}^{1}(U)$, neglecting cubic terms. 

We  use the following 
Lemma \ref{Lieexp-Inve} 
that collects Lemmata $A.8$, $A.9$ and $A.10$ in \cite{BFP}.
The variable $U$ may denote both the 
couple of complex variables $(u,\bar{u})$ 
or the real variables  $(\eta,\psi)$.

\begin{lemma}[{ \cite{BFP}}] {\bf (Lie expansion)}\label{Lieexp-Inve}
Consider  a map $ \theta \mapsto {\bf F}_{\leq 2}^{\theta}(U) $, 
$ \theta \in [0,1] $, 
of the form 
\begin{equation}\label{espmultilin}
{\bf F}^{\theta}_{\leq 2} (U)=U
+\theta M_{1}(U)[U]\,, \qquad
 M_{1}(U) \in \widetilde{{\mathcal{M}}}_{1}\otimes\mathcal{M}_2(\C)\,.
\end{equation} 
Then:

\vspace{0.5em}
\noindent
$(i)$ the family of maps ${\bf G}_{\leq 2}^{\theta}(V):=V- \theta  M_{1}(V)[V] $
is such that
$$
{\bf G}_{\leq 2}^{\theta}\circ{\bf F}_{\leq 2}^{\theta}(U)= U + M_{\geq 2}(\theta; U)[U] \, ,  \quad
{\bf F}_{\leq 2}^{\theta}\circ{\bf G}_{\leq 2}^{\theta}(V)= V + M_{\geq 2}(\theta; U)[U] \, , 
$$
where $ M_{\geq 2}(\theta; U) $ is a polynomial in 
$ \theta $ and finitely many monomials  
$ M_p (U)[U] $ for $ M_p (U) \in  \widetilde{{\mathcal{M}}}_{p} \otimes\mathcal{M}_2(\C) $, $ p \geq 2 $;

\vspace{0.5em}
\noindent
$(ii)$ the family of maps ${\bf G}_{\leq 2}^\theta (V) $
satisfies
$$
\pa_{\theta}{\bf G}_{\leq 2}^{\theta}(V) 
= S({\bf G}_{\leq 2}^{\theta}(V))+M_{\geq 2}(\theta;U)[U] \, , \quad 
{\bf G}_{\leq 2}^{0}(V) = V \, , 
$$
where 
$ S(U)=  S_1(U)[U]$ with 
 $ S_{1}(U)\in \widetilde{\mathcal{M}}_{1}\otimes\mathcal{M}_2(\C) $ 
 and 
$ M_{\geq 2}(\theta; U) $ is a polynomial in $ \theta $ and finitely many monomials  
$ M_p (U)[U] $ for maps $ M_p (U) \in  \widetilde{{\mathcal{M}}}_{p} \otimes\mathcal{M}_2(\C) $, $ p \geq 2 $. 

\vspace{0.5em}
\noindent
(iii) 
Let $ X(U) = M(U)U $ for some map $ M (U) = M_0 + M_1 (U)  $ 
where  $ M_0 $ is in $ \widetilde{\mathcal M}_0 \otimes {\mathcal M}_2 (\C) $
and 
$ M_1 (U) $ in $ \widetilde{\mathcal M}_1 \otimes {\mathcal M}_2 (\C) $.
If $U$ solves $\pa_t U =X(U)$, 
then 
the function  $V := {\bf F}^{1}_{\leq 2} (U) $ solves
\be\label{expansione appro2}
\pa_t V =  X(V) + \bral S, X\brar(V) +  \cdots \, , 
\ee
up to terms of degree of homogeneity greater or equal to $3$, 
where we define the nonlinear commutator 
\be\label{nonlinCommu}
\bral S, X\brar (U)  := d_U X (U) [S(U)] -  d_U S (U) [X(U)] \, . 
\ee
\end{lemma}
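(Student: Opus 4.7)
The plan is to treat the three items in turn, relying on the homogeneity hypothesis $M_1 \in \widetilde{\mathcal{M}}_1 \otimes \mathcal{M}_2(\mathbb{C})$, which by Definition \ref{smoothoperatormaps}(i) means that $M_1(U_1)[U_2]$ is bilinear in $(U_1,U_2)$, so that $Q(U) := M_1(U)[U]$ is a genuinely quadratic (degree-$2$ homogeneous) vector field in $U$, with differential $dQ(U)[W] = M_1(W)[U] + M_1(U)[W]$. Throughout I will write $\mathcal{O}_{\geq p}$ for any vector field of the form $\sum_{q\geq p}\mathfrak{M}_q(U)[U]$ with $\mathfrak{M}_q \in \widetilde{\mathcal{M}}_q \otimes \mathcal{M}_2(\mathbb{C})$, which is the content required by the statement for the remainders $M_{\geq 2}(\theta;U)[U]$.

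For item (i), the proof is a direct calculation. Expanding
\[
\mathbf{G}^\theta_{\leq 2}\circ\mathbf{F}^\theta_{\leq 2}(U)
= \mathbf{F}^\theta_{\leq 2}(U) - \theta\, M_1\!\bigl(\mathbf{F}^\theta_{\leq 2}(U)\bigr)\bigl[\mathbf{F}^\theta_{\leq 2}(U)\bigr]
= U + \theta Q(U) - \theta Q\!\bigl(U+\theta Q(U)\bigr),
\]
and using the bilinearity of $M_1(\cdot)[\cdot]$ to expand $Q(U+\theta Q(U))=Q(U)+\theta\bigl(M_1(U)[Q(U)]+M_1(Q(U))[U]\bigr)+\theta^2 Q(Q(U))$, all terms past $Q(U)$ are cubic or higher homogeneous maps built from $M_1$ by composition/substitution; Proposition \ref{composizioniTOTALI} guarantees that these terms belong to $\widetilde{\mathcal{M}}_p \otimes \mathcal{M}_2(\mathbb{C})$ for $p\geq 2$. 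The same computation carried out in the reverse order handles $\mathbf{F}^\theta_{\leq 2}\circ\mathbf{G}^\theta_{\leq 2}$.

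For item (ii), set $S(U) := -Q(U)=-M_1(U)[U]$, so $S = S_1(U)[U]$ with $S_1 := -M_1 \in \widetilde{\mathcal{M}}_1\otimes\mathcal{M}_2(\mathbb{C})$. Differentiating $\mathbf{G}^\theta_{\leq 2}(V)=V-\theta Q(V)$ in $\theta$ gives $\partial_\theta \mathbf{G}^\theta_{\leq 2}(V) = -Q(V) = S(V)$, and since $S$ is homogeneous of degree $2$ and $\mathbf{G}^\theta_{\leq 2}(V)-V$ is degree $2$, one has $S(\mathbf{G}^\theta_{\leq 2}(V)) = S(V) + \mathcal{O}_{\geq 3}$, polynomially in $\theta$. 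Subtracting gives the desired identity, with the remainder in the correct class by the same multilinear-composition argument.

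For item (iii), which is the heart of the lemma, I write $V=\mathbf{F}^1_{\leq 2}(U)=U+Q(U)$ and differentiate in $t$:
\[
\partial_t V = d\mathbf{F}^1_{\leq 2}(U)[X(U)] = X(U) + dQ(U)[X(U)].
\]
I then need to re-express the right-hand side as a function of $V$. From (i), $U = \mathbf{G}^1_{\leq 2}(V) + \mathcal{O}_{\geq 3} = V - Q(V) + \mathcal{O}_{\geq 3}$. Since $X(U)=M(U)U$ with $M=M_0+M_1(U)$, the map $X$ decomposes as $X=X^{(1)}+X^{(2)}+\mathcal{O}_{\geq 3}$ with $X^{(1)}(U)=M_0 U$ linear and $X^{(2)}(U)=M_1(U)[U]$ quadratic. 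Substituting $U=V-Q(V)+\mathcal{O}_{\geq 3}$:
\[
X(U) = X^{(1)}(V) - X^{(1)}(Q(V)) + X^{(2)}(V) + \mathcal{O}_{\geq 3} = X(V) - dX(V)[Q(V)] + \mathcal{O}_{\geq 3},
\]
where the last equality uses $dX^{(1)}(V)[W]=X^{(1)}(W)=M_0 W$ and notes that $dX^{(2)}(V)[Q(V)]\in\mathcal{O}_{\geq 3}$. Similarly $dQ(U)[X(U)] = dQ(V)[X(V)] + \mathcal{O}_{\geq 3}$ since $dQ$ is linear in its base point. Combining and recalling $S=-Q$,
\[
\partial_t V = X(V) - dX(V)[Q(V)] + dQ(V)[X(V)] + \mathcal{O}_{\geq 3}
= X(V) + \bral S, X \brar(V) + \mathcal{O}_{\geq 3},
\]
by the definition \eqref{nonlinCommu} of the nonlinear commutator.

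The main point requiring care—and the place I expect the heaviest bookkeeping—is the verification that all remainder terms that appear (compositions $Q\circ Q$, differentials $dX^{(2)}(V)[Q(V)]$, iterated substitutions) indeed lie in classes $\widetilde{\mathcal{M}}_p\otimes\mathcal{M}_2(\mathbb{C})$ for $p\geq 2$, with the polynomial dependence in $\theta$ claimed; this is essentially a translation of Proposition \ref{composizioniTOTALI}(iii) into the multilinear setting, combined with the observation that differentials of $p$-homogeneous maps remain $p$-homogeneous in the appropriate sense. The algebraic identifications above, by contrast, are purely formal manipulations of quadratic forms.
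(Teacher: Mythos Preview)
The paper does not prove this lemma at all: it is quoted from \cite{BFP} (collecting Lemmata A.8, A.9 and A.10 there), and the authors give no argument beyond the citation. Your proof is therefore not being compared against anything in this paper, but it is correct on its own terms and supplies exactly the elementary computations that the citation hides. The identification $S=-Q=-M_1(U)[U]$ in item~(ii) and the direct pushforward calculation in item~(iii) are the standard way to unwind the Lie expansion, and your sign bookkeeping in the commutator matches the convention~\eqref{nonlinCommu} and the downstream usage in \eqref{push-forward}.

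One small inaccuracy worth flagging: your appeal to Proposition~\ref{composizioniTOTALI} for the closure of the remainder terms in the classes $\widetilde{\mathcal{M}}_p\otimes\mathcal{M}_2(\mathbb{C})$ is not literally what that proposition says. Items (i)--(iii) there are stated for smoothing operators $R\in\widetilde{\mathcal{R}}^{-\rho}_p$, not for general maps in $\widetilde{\mathcal{M}}_p$. The fact you actually need --- that substituting an element of $\widetilde{\mathcal{M}}_q$ into one argument of a $(p+1)$-linear map in $\widetilde{\mathcal{M}}_p$ produces an element of $\widetilde{\mathcal{M}}_{p+q}$ --- is straightforward from Definition~\ref{smoothoperatormaps} (the estimate is immediate, and the momentum and translation conditions \eqref{omoresti2}--\eqref{def:R-trin} are preserved under substitution), and is indeed established in \cite{BFP}; you should cite that rather than Proposition~\ref{composizioniTOTALI}.
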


\begin{itemize}
\item {\bf Notation.} 
Given a homogeneous vector field $ X $, we denote
by $ \Phi_{S}^\star X $ the induced (formal) push forward (see \eqref{expansione appro2})  
\be\label{push-forward}
\Phi_{S}^\star X = X + \bral S,X \brar+ \cdots 
\ee
where the dots  $ \cdots $ 
denote cubic terms. 
\end{itemize}

\noindent
{\bf Proof of Proposition \ref{lem:ide3}.}
\\[1mm]
{\bf Step $ 1$. The good unknown change of variable $ {\mathcal G} $ in \eqref{Alinach-good}}. 
First of all we 
note that $ {\mathcal G}(\eta,\psi)=(\Phi^{\theta}(\eta, \psi))_{\theta=1} $ 
where
$$
\Phi^{\theta} \vect{\eta}{\psi} =
 \vect{\eta}{ \psi - \theta \opbw{(B(\eta, \psi)) \eta}  } \, ,\quad \theta\in [0,1] \, . 
$$
Since $ B(\eta, \psi) $ is a function in 
$ \Sigma {\mathcal F}^\R_{K,0,1}[r,2] $ we have that 
$\Phi^{\theta}(\eta,\psi)$  has an expansion as in \eqref{espmultilin} 
up to cubic terms.
Hence, by Lemma \ref{Lieexp-Inve}-$(i)$-$(ii)$, 
we regard the inverse of the map $ {\mathcal G}_{\leq 2}$,  obtained 
truncating $ {\cal G} $ up to cubic remainders, 
as  the (formal) 
 time one flow of a  quadratic 
 vector field 
\be\label{S34Sez4}
\mathtt{S}_2 := S_1(\eta,\psi)\vect{\eta}{\psi} \, , 
\quad 
S_{1}(\eta,\psi) \in \widetilde{\mathcal{M}}_{1}\otimes\mathcal{M}_2(\C)\,.
\ee
By \eqref{HVF2Sez4}, \eqref{push-forward} and  \eqref{S34Sez4}, we get
\begin{align}
\Phi_{\mathtt{S}_2}^\star (X_1+X_2) &  = X_1 + 
X_2 + \bral\mathtt{S}_2, X_1\brar  
+ \cdots \, . \label{VFB}
\end{align}
\noindent
{\bf Step $ 2$.  Complex coordinates $\mathtt{B}$ in \eqref{complexCoord}}. 
In the complex 
coordinates \eqref{complexCoord},  the vector field 
\eqref{VFB} 
reads, recalling  \eqref{vec:compl} and \eqref{PushF}, 
\be\label{VFComplex}
\mathtt{B}^\star \Phi_{\mathtt{S}_2}^\star (X_1+X_2) = 
X_1^\C +  X_2^\C + \bral\mathtt{S}_2^\C, X_1^\C\brar 
+ \cdots 
\ee
where, by \eqref{compreal}, \eqref{HVF2Sez4}, \eqref{Hamexp2},
\be\label{X1X2C}
X_{1}^{\C}  = X_{H_\C^{(2)}} =   \ii  \sum_{j, \s} \sigma \mk(j) u_j^\s \pa_{u_j^\s} \, , \quad
X_2^\C  = X_{H^{(3)}_{\mathbb{C}}} \, .
\ee
{\bf  Step $ 3 $. The transformation $ {\bf F}^{1}$  in \eqref{mappaTotaSez4}.}
By the last items of Proposition \ref{regolo} and Proposition \ref{cor:BNF10}, 
the map ${\bf F}^{\theta}(U) $ 
has the form  \eqref{espmultilin} 
up to cubic terms. Thus, by Lemma \ref{Lieexp-Inve}-$(i)$-$(ii)$,
 the approximate inverse of the truncated 
 map $ {\bf F}_{\leq 2} ^1 $ 
can be regarded  as the (formal) 
 time-one flow of a  vector field 
\begin{equation}\label{T34}
\mathtt{T}_2:=T_1(U)[U] \, , \quad   
  T_{1}(U)\in \widetilde{\mathcal{M}}_{1}\otimes\mathcal{M}_2(\C) \,.
\end{equation}
By \eqref{VFComplex}, \eqref{X1X2C}, \eqref{push-forward}, we get
\begin{equation}\label{Liegrad}
\Phi_{\mathtt{T}_2}^\star \mathtt{B}^\star \Phi_{\mathtt{S}_2}^\star (X_1+X_2)  =  
X_{H^{(2)}_\C} +  X_{H^{(3)}_{\mathbb{C}}}  
+ \bral  \mathtt{S}_2^\C + \mathtt{T}_2, X_{H^{(2)}_\C} \brar  + \cdots \, . 
\end{equation}
Comparing \eqref{finalsyst1012} and \eqref{Liegrad} we deduce that 
\begin{equation}\label{condizioniSez4}
\mathtt{R}_1^{res}(Y)[Y] \equiv 
X_{H^{(3)}_{\mathbb{C}}}    + \bral  \mathtt{S}_2^\C + \mathtt{T}_2, X_{H^{(2)}_\C} \brar \,.
\end{equation}
The vector field $ \mathtt{R}_1^{res}(Y)[Y] $ is in Poincar\'e-Birkhoff normal form,
recall  Definition \ref{Resonant}. 
Therefore, defining the linear operator 
$ \Pi_{\ker}  $ acting on a quadratic monomial vector field
$ u_{j_1}^{\s_1} u_{j_2}^{\s_2} \pa_{u^{\s}_j}$  
as
\be\label{PikerdiSez4}
\Pi_{\ker} \Big( u_{j_1}^{\s_1} u_{j_2}^{\s_2}  \pa_{u^{\s}_j} \Big) := 
\begin{cases}
u_{j_1}^{\s_1} u_{j_2}^{\s_2}  \pa_{u^{\s}_j} \quad \quad 
{\rm if} \ - \s \mk(j) + \s_1 \mk({j_1}) + \s_2 \mk({j_2}) = 0 \\
0 \qquad \qquad\quad \quad {\rm otherwise} \, ,
\end{cases}
\ee
we have that
\be\label{kerRR}
\mathtt{R}_1^{res}(Y)[Y] = \Pi_{\ker}(\mathtt{R}_1^{res}(Y)[Y])    \, . 
\ee
In addition, since 
\[
 \bral  u_{j_1}^{\s_1} u_{j_2}^{\s_2}  \pa_{u^{\s}_j} , X_{H^{(2)}_\C} \brar = 
 \ii \big(\s \mk(j) - \s_1 \mk({j_1}) - \s_2 \mk({j_2})  \big) 
u_{j_1}^{\s_1} u_{j_2}^{\s_2}    \pa_{u^{\s}_j} \,,
\]
we deduce
\begin{equation}\label{sononelKerSez4}
\Pi_{{\rm ker}} \bral \mathtt{S}_2^\C 
+ \mathtt{T}_2 , X_{H^{(2)}_\C} \brar =  0\,.
\end{equation}
In conclusion,  \eqref{kerRR}, \eqref{condizioniSez4} and \eqref{sononelKerSez4}
imply that
\[
\mathtt{R}_1^{res}(Y)[Y] = \Pi_{{\rm ker}}(X_{H^{(3)}_{\mathbb{C}}})
\stackrel{\eqref{Hamexp2} } = X_{H_{BNF}^{(3)}}
\]
where $ H_{BNF}^{(3)} $ is the Hamiltonian in \eqref{H3bnf}. 
This proves \eqref{lem:identi}.

\vspace{0.5em}
\noindent
{\bf Proof of Theorem \ref{BNFtheorem}. }
Hypothesis \eqref{hypoeta} implies that 
 the variable $u$  defined  in \eqref{compVar} satisfies 
\eqref{pallaU}
and therefore the function $U=\vect{u}{\bar{u}}$
belongs to the ball  $B_{s}^{K}(I;r)$  (recall \eqref{palla}) 
 with $ r = C_{s,K} \bar{\e} \ll 1$ and $I=[-T,T]$. 
By Proposition \ref{Formulazione} the function
 $U$ solves system \eqref{complEQ}.
 Then we apply Proposition \ref{regolo} and  the Poincar\'e-Birkhoff Proposition \ref{cor:BNF10}  with $s\gg K\geq 
K'(\rho)$ and $K'(\rho)$ given by Proposition \ref{regolo}, taking  
$ \bar \e $ small enough.
The map ${\bf F}^{1}(U) $ in \eqref{mappaTotaSez4} 
transforms the water waves system  \eqref{complEQ} 
into \eqref{finalsyst1012}, 
which, thanks to  Proposition \ref{lem:ide3}, 
is expressed in terms of the  Hamiltonian $H_{BNF}^{(3)} $ 
in \eqref{H3bnf} as
\begin{equation*}
 \pa_{t}y=\ii \mk(D)y +\ii \pa_{\bar{y}}H_{BNF}^{(3)}(y,\bar{y}) 
+ {\mathcal X}^{+}_{\geq 3} \, 
\end{equation*}
where $\mathcal{X}_{\geq 3}^+$ is the first component of ${\mathcal X}_{\geq3}(U,Y)$ in \eqref{Stimaenergy100}. 
Renaming $ y \rightsquigarrow z $,
the above equation is \eqref{theoBireq}.
We define
 $ z = \mathfrak{B}(\eta, \psi) [\eta, \psi] $  as 
the first component of the change of variable \eqref{var:finSez4}, namely   
of ${\bf F}^{1}(U) \circ {\mathtt B} \circ {\cal G} [\eta, \psi] $, with
$ U $ written in terms of $ (\eta, \psi) $ by \eqref{compVar},  \eqref{omega0}. 
By \eqref{stimafinaleFINFRAK}  and \eqref{stimFINFRAK} with $k=0$, 
and using that $U \in B_{s}^{K}(I;r)$,  we get 
\begin{equation}\label{secondaequiv}
\| z (t) \|_{\dot{H}^{s}}\sim_{s}\|u (t) \|_{\dot{H}^{s}} \,,  
\end{equation} 
and  \eqref{Germe} follows, using also   \eqref{compVar}, \eqref{omega0}.
The cubic vector field $ \mathcal{X}_{\geq3}^{+} $  in \eqref{Stimaenergy100} 
satisfies the estimate
$ \|\mathcal{X}_{\geq3}^{+}\|_{\dot{H}^{s-\frac{3}{2}}}\lesssim_s \|z\|^{3}_{\dot{H}^{s}} $
by Proposition 3.8 in \cite{BD} (recall that 
$ \mathcal{H}_{\geq2}(U;\x) 
\in \Gamma^{3/2}_{K,K',2}[r]\otimes\mathcal{M}_2(\mathbb{C}) $), 
by  \eqref{piove} with $ k = 0 $, and \eqref{lem:tempo}, \eqref{secondaequiv}.
Moreover, the vector field $\mathcal{X}_{\geq3}^{+}$ 
satisfies
the energy estimate \eqref{theoBirR}
since the symbol ${\mathcal{H}}_{\geq2}(U;  \xi)$ 
is independent of $x$ and purely imaginary up to symbols of order $ 0 $,   
see \eqref{quasi-real} (for the detailed argument we refer to Lemma $7.5$ in \cite{BFP}).

\subsection{Energy estimate and proof of Theorem \ref{thm:main2}}\label{sec:EE}

We now deduce Theorem \ref{thm:main2} by Theorem \ref{BNFtheorem} and
the following energy estimate for the solution $ z $ of the Birkhoff 
resonant system \eqref{theoBireq}. 
By time reversibility,  without loss of generality, we may only look at positive times $ t > 0 $.

\begin{lemma} {\bf (Energy estimate)}\label{lem:SE}
Fix $ s , \bar{\e} > 0 $ as in Theorem  \ref{BNFtheorem}  and 
assume that 
the solution $(\eta,\psi)$  of \eqref{WW1} satifies \eqref{hypoeta}.
Then the solution  $ z (t) $ of \eqref{theoBireq} satisfies 
\begin{equation}\label{stimaconclu}
\| z (t) \|_{{\dot H}^s}^2 \leq C(s) \| z (0) \|_{{\dot H}^s}^2 + C(s) \int_0^t 
\| z(\tau)\|_{{\dot H}^s}^4 \, d \tau \, , \quad \forall  t \in [0, T ]    \,   .
\end{equation}
\end{lemma}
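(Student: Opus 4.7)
The plan is to replace the natural energy $\|z\|_{\dot H^s}^2$ by an \emph{equivalent} functional $F_s(z)$ that is exactly conserved by the pure Birkhoff flow $\pa_t z = \ii \mk(D) z + \ii \pa_{\bar z} H^{(3)}_{BNF}$, so that only the cubic correction $\mathcal{X}^+_{\geq 3}$ drives its time evolution and produces the quartic right-hand side in \eqref{stimaconclu}. This detour is essential: a direct computation shows that $\ii\pa_{\bar z}H^{(3)}_{BNF}$ contributes a term of \emph{cubic} size to $\tfrac{d}{dt}\|z\|_{\dot H^s}^2$, and from such a cubic bound Gronwall delivers only an $\e^{-1}$ lifespan, one power short of what \eqref{stimaconclu} requires for \eqref{smallness2}.

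The structural cancellation that makes such an $F_s$ possible uses two facts. First, by Lemma \ref{stimedalbasso} every nonzero monomial of $H^{(3)}_{BNF}$ has all three indices in the finite set $\{0<|j|\leq \mathtt{C}\}$; setting $z_L:= \sum_{0<|j|\leq \mathtt{C}} z_j e^{\ii j x}/\sqrt{2\pi}$ and $z_H:=z-z_L$, this gives $\pa_{\bar z_j}H^{(3)}_{BNF}=0$ for $|j|>\mathtt{C}$, so along the Birkhoff flow the high modes obey $\pa_t z_j=\ii\mk(j)z_j$ and $|z_j|$ is individually preserved for every $|j|>\mathtt{C}$. Second, for any resonant monomial $z_{j_1}^{\s_1}z_{j_2}^{\s_2}z_{j_3}^{\s_3}$ the Poisson bracket \eqref{PoiBra} yields
\begin{equation*}
\{z_{j_1}^{\s_1}z_{j_2}^{\s_2}z_{j_3}^{\s_3},\, H^{(2)}_\C\} = -\ii\bigl(\s_1\mk(j_1)+\s_2\mk(j_2)+\s_3\mk(j_3)\bigr)\,z_{j_1}^{\s_1}z_{j_2}^{\s_2}z_{j_3}^{\s_3} = 0
\end{equation*}
by the $3$-waves resonance defining \eqref{H3bnf}; hence $\{H^{(2)}_\C, H^{(3)}_{BNF}\}=0$, and since $\pa_{\bar z_j}H^{(3)}_{BNF}$ is supported on $|j|\leq \mathtt{C}$, the same identity also gives $\{H^{(2)}_\C(z_L), H^{(3)}_{BNF}\}=0$, so $H^{(2)}_\C(z_L)=\sum_{0<|j|\leq \mathtt{C}}\mk(j)|z_j|^2$ is a prime integral of the Birkhoff flow.

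With these ingredients I define
\begin{equation*}
F_s(z):=\|z_H\|_{\dot H^s}^2+c_s\, H^{(2)}_\C(z_L),\qquad c_s:=\mathtt{C}^{2s}\omega_0^{-1},\quad \omega_0:=\min_{0<|j|\leq \mathtt{C}}\mk(j),
\end{equation*}
which, since $\mk(j)\sim 1$ on $\{0<|j|\leq \mathtt{C}\}$, satisfies the two-sided bound $\|z\|_{\dot H^s}^2\leq F_s(z)\leq C(s)\|z\|_{\dot H^s}^2$ and is conserved by the pure Birkhoff flow. Differentiating $F_s$ along the full system \eqref{theoBireq} therefore kills the Birkhoff contribution and leaves
\begin{equation*}
\tfrac{d}{dt}F_s(z)\,=\, 2\Re\!\int_\T\!|D|^s z_H \cdot \overline{|D|^s\mathcal{X}^+_{\geq 3}}\,dx \,+\, 2c_s\Re\!\sum_{0<|j|\leq \mathtt{C}}\!\mk(j)\,\bar z_j\,(\mathcal{X}^+_{\geq 3})_j.
\end{equation*}
The first term I rewrite as $2\Re\!\int\!|D|^s z\cdot\overline{|D|^s\mathcal{X}^+_{\geq 3}}\,dx$ minus its low-frequency analogue: the full-$z$ integral is $\leq C(s)\|z\|_{\dot H^s}^4$ by \eqref{theoBirR}, and the low-frequency remainder is controlled using $\|\mathcal{X}^+_{\geq 3}\|_{L^2}\lesssim_s \|\mathcal{X}^+_{\geq 3}\|_{\dot H^{s-3/2}}\lesssim_s \|z\|_{\dot H^s}^3$ (valid since $s\gg 1$) together with the frequency truncation $|j|\leq \mathtt{C}$, which costs only a constant power of $\mathtt{C}^s$. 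The second term is a finite sum estimated directly by $\|z_L\|_{L^2}\,\|\mathcal{X}^+_{\geq 3}\|_{L^2}\leq C(s)\|z\|_{\dot H^s}^4$. Adding these, $\tfrac{d}{dt}F_s\leq C(s)\|z\|_{\dot H^s}^4$, and integrating in time and invoking $F_s\sim\|z\|_{\dot H^s}^2$ yields \eqref{stimaconclu}. The conceptual obstacle is identifying the right $F_s$: without the combined use of the Hamiltonian identity $\{H^{(2)}_\C, H^{(3)}_{BNF}\}=0$ and of the finite-dimensional support of $H^{(3)}_{BNF}$ guaranteed by Lemma \ref{stimedalbasso}, the apparently cubic contribution of the BNF vector field cannot be absorbed and the $\e^{-2}$ lifespan is out of reach.
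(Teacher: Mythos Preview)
Your proof is correct and follows essentially the same route as the paper: define a modified energy $F_s(z)=\|z_H\|_{\dot H^s}^2+c\,H^{(2)}_\C(z_L)$ equivalent to $\|z\|_{\dot H^s}^2$, use Lemma~\ref{stimedalbasso} to confine $H^{(3)}_{BNF}$ to low modes and the resonance relation to get $\{H^{(2)}_\C,H^{(3)}_{BNF}\}=0$, then bound $\tfrac{d}{dt}F_s$ by splitting the high-mode piece via \eqref{theoBirR} and the low-mode piece via the $\dot H^{s-3/2}$ estimate on $\mathcal X^+_{\geq 3}$. The only cosmetic difference is that the paper takes $c=1$ in its equivalent norm $\|w\|_s^2:=H^{(2)}_\C(w_L)+\|w_H\|_{\dot H^s}^2$, whereas you chose $c_s=\mathtt C^{2s}\omega_0^{-1}$ to make the inequality $\|z\|_{\dot H^s}^2\leq F_s(z)$ hold with constant one; this changes nothing in the argument.
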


\begin{proof}
By Lemma  \ref{stimedalbasso}, the Birkhoff resonant Hamiltonian $ H^{(3)}_{BNF} $ in 
\eqref{H3bnf} depends on finitely many variables $ z_{j_1}^\pm,  z_{j_2}^\pm,  z_{j_3}^\pm $, 
$ j_1, j_2, j_3 \in \Z \setminus \{0\} $,  because 
\be
\begin{cases} \label{maxj1j2j3} 
\s_1 j_1 + \s_2 j_2 + \s_3 j_3 = 0 \\
 \s_1 \mk(j_1) + \s_2 \mk(j_2) + \s_3 \mk(j_3) = 0
 \end{cases}
 \quad \Longrightarrow  \quad \max(|j_1|, |j_2|, |j_3|) < {\mathtt C} \, .
\ee
 For any function 
$ w \in {\dot H}^s (\T) $ 
we define the projector $\Pi_{L}$ on low modes, respectively the projector 
$\Pi_{H}$ on high modes, as
$$
w_{L}:=\Pi_{L}w:=\frac{1}{\sqrt{2\pi}}
\sum_{0< |j|\leq \mathtt{C}}w_{j}e^{\ii jx}\,,\qquad
w_{H}:=\Pi_{H}w:=\frac{1}{\sqrt{2\pi}}\sum_{|j|> \mathtt{C}}w_{j}e^{\ii jx} \, . 
$$
We write $w=w_{L}+w_{H}$ and we define the norm
$$
\|w\|_{s}^{2}:=H^{(2)}_{\mathbb{C}}(w_{L})+\|w_{H}\|^2_{\dot{H}^{s}}
$$
where (see \eqref{Hamexp2}) 
\be\label{H2-comp}
H^{(2)}_{\mathbb{C}} (w) = 
\int_{\T} \mk (D)  w \cdot\bar{w} \, dx = \sum_{j \in \Z \setminus \{0\}}\mk (j) w_j \ov{w_j}\, .  
\ee
Since $ \Omega (j) > 0 $, $ \forall j \neq 0 $,  and $ w_L $ is supported on finitely many Fourier modes
$ 0 < |j | \leq {\mathtt C} $, 
we have that, for some constant $ C_s > 0 $,  
\begin{equation}\label{normaequiv2}
C_{s}^{-1}\|w\|_{s}\leq \|w\|_{\dot{H}^{s}}\leq C_{s}\|w\|_{s}  \, , 
\end{equation}
i.e. the norms $\|\cdot\|_{s}$ and $\|\cdot\|_{\dot{H}^{s}}$ are equivalent.
We now prove the estimate \eqref{stimaconclu} for the 
equivalent norm $\|\cdot\|_{s}$.

We first note that, by \eqref{maxj1j2j3}, $   H^{(3)}_{BNF}  (z, \bar z)  = H^{(3)}_{BNF} 
 (z_L, \bar z_L )  $.
Therefore 
$ \Pi_H \pa_{\bar{z}} H^{(3)}_{BNF}  (z, \bar z) = 0 $ and 
 the equation \eqref{theoBireq} amounts to the system  
\be\label{sistema-BNF}
\begin{cases}
\dot z_{L}  = \ii \mk(D) z_{L} + \ii \pa_{\bar{z}} H^{(3)}_{BNF}  (z_{L}, \bar z_L)
+\Pi_{L}\big( \mathcal{X}^{+}_{\geq3}(U,Z)\big) \cr
\dot{z}_{H} =\ii \mk(D) z_{H}
\qquad\quad\qquad \qquad  \quad \   +\Pi_{H}\big( \mathcal{X}^{+}_{\geq3}(U,Z)\big) \, . 
\end{cases}
\ee
Moreover since the Hamiltonian $ H^{(3)}_{BNF}  $ in \eqref{H3bnf} is in Birkhoff normal form, it Poisson commutes with the quadratic Hamiltonian $ H^{(2)}_{\mathbb{C}} $
in \eqref{H2-comp}, i.e. 
 \begin{equation}\label{Zcommu}
  \{H^{(3)}_{BNF}, H^{(2)}_{\mathbb{C}} \} =  0 \, . 
  \end{equation}
We have
\begin{align}\label{secondastima}
\pa_{t}H^{(2)}_{\mathbb{C}}(z_{L})&  \stackrel{\eqref{sistema-BNF}}  =
\{H^{(3)}_{BNF} , H^{(2)}_{\mathbb{C}}\} +
2 {\rm Re}\int_{\mathbb{T}}\mk(D) \Pi_{L}\big(
\mathcal{X}_{\geq3}^{+}(U,Z)\big)\cdot \bar{z}_{L}dx \nonumber \\
& \stackrel{\eqref{Zcommu}} = 2 {\rm Re} 
\int_{\mathbb{T}} \Pi_{L} \mk(D) \big(
\mathcal{X}_{\geq3}^{+}(U,Z)\big)\cdot \Pi_{L} \bar z \, dx 
 \lesssim_{s} \| z \|^{4}_{\dot{H}^{s}}
\end{align}
using  
that $\|\Pi_{L} \mk(D) \mathcal{X}_{\geq3}^{+}\|_{\dot{H}^{0}}
\lesssim_{s}\|z\|_{\dot{H}^{s}}^{3} $
by item $(2)$ of Theorem \ref{BNFtheorem}.
Moreover,  since $ \Pi_H  $ and $ \Pi_L  $ project on  $ L^2 $-orthogonal subspaces, 
\begin{align}\label{primastima}
\pa_{t}\|z_{H}\|^{2}_{\dot{H}^{s}} & =\pa_{t}(|D|^s z_{H}, |D|^{s}z_{H})_{L^{2}} \nonumber  \stackrel{\eqref{sistema-BNF}} = 2{\rm Re}\int_{\mathbb{T}}|D|^{s}\Pi_{H}\big(
\mathcal{X}_{\geq3}^{+}(U,Z)\big)\cdot |D|^{s}  \Pi_H \ov{z} \, dx 
\\ &=
2{\rm Re}\int_{\mathbb{T}}|D|^{s}
\mathcal{X}_{\geq3}^{+}(U,Z)\cdot |D|^{s}  \ov{z} \, dx 
-
2{\rm Re}\int_{\mathbb{T}}|D|^{s}\Pi_{L}\big(
\mathcal{X}_{\geq3}^{+}(U,Z)\big)\cdot |D|^{s}  \Pi_L \ov{z} \, dx \nonumber
\\&
 \stackrel{\eqref{theoBirR}}{\lesssim_s}  
\| z \|^{4}_{\dot{H}^{s}}+\|\Pi_{L}\mathcal{X}_{\geq3}^{+}\|_{\dot{H}^{s}}
\|\Pi_{L}z\|_{\dot{H}^{s}}\lesssim_{s}\| z \|^{4}_{\dot{H}^{s}}
\end{align}
by item $(2)$ of Theorem \ref{BNFtheorem}.
Integrating in $ t $ the inequalities 
\eqref{secondastima}, \eqref{primastima}, we deduce
\[
\| z(t)\|^{2}_{s}\lesssim_{s}\|z(0)\|^{2}_{s}
+\int_{0}^{t}\|z(\tau)\|^{4}_{\dot{H}^{s}}d\tau
\]
which, together with the equivalence \eqref{normaequiv2}, implies  \eqref{stimaconclu}.
\end{proof}

\begin{proof}[{\bf Conclusion of the Proof of Theorem \ref{thm:main2}}]
 Consider  initial data $(\eta_0,\psi_0)$ satisfying \eqref{smallness}
 with $s\gg 1$ given by Theorem \ref{BNFtheorem}.
Classical local existence results imply that 
$$
(\eta,\psi)\in C^{0}\big([0,T_{\rm loc}], 
H_0^{s+\frac{1}{4}}(\mathbb{T},\mathbb{R})\times
  \dot{H}^{s-\frac{1}{4}}(\mathbb{T},\mathbb{R})\big)
$$
for some $ T_{\rm loc} > 0 $ and thus \eqref{hypoeta} holds with $ \bar \e = 2 \e $
and  $  T = T_{\rm loc} $. 
A standard bootstrap argument based 
on the energy estimate \eqref{stimaconclu} 
(see for instance Proposition $7.6$ in \cite{BFP}) implies  that the solution
$z(t)$ of \eqref{theoBireq} can be extended up to a time 
$ T_{\e} := c_0 \e^{-2}$ for some $c_0>0$, and satisfies
 \begin{equation}\label{pallaU3sez4}
\sup_{t\in [0,T_{\e}]} \| z(t) \|_{\dot{H}^{s}}\lesssim_{s} \e\,.
\end{equation}
We deduce \eqref{smallness2} by 
\eqref{pallaU3sez4}, the equivalence \eqref{secondaequiv}, and 
going back to the original variables $ (\eta, \psi ) $ by \eqref{compVar} and 
\eqref{omega0}.
\end{proof}

{\sc Massimiliano Berti, SISSA}, Trieste,  berti@sissa.it

{\sc Roberto Feola, University of Nantes}, roberto.feola@univ-nantes.fr

{\sc Luca Franzoi, SISSA},  Trieste, luca.franzoi@sissa.it

\end{document}